\patchcmd{\subsection}{-.5em}{.5em}{}{}
\patchcmd{\subsubsection}{-.5em}{.5em}{}{}
\numberwithin{equation}{section}
\newcommand{\SL}{\operatorname{SL}}
\newcommand{\cD}{\mathcal{D}}
\newcommand{\cK}{\mathcal{K}}
\newcommand{\cL}{\mathcal{L}}
\newcommand{\cN}{\mathcal{N}}
\newcommand{\cP}{\mathcal{P}}
\newcommand{\cQ}{\mathcal{Q}}
\newcommand{\cX}{\mathcal{X}}
\newcommand{\cY}{\mathcal{Y}}
\newcommand{\bN}{\mathbb{N}}
\newcommand{\bR}{\mathbb{R}}
\newcommand{\bZ}{\mathbb{Z}}
\newcommand{\ra}{\rightarrow}
\newcommand{\qen}{\enskip \textrm{and} \enskip}
\newcommand{\qand}{\quad \textrm{and} \quad}
\newcommand\subsetsim{\mathrel{%
\ooalign{\raise0.2ex\hbox{$\subset$}\cr\hidewidth\raise-0.8ex\hbox{\scalebox{0.9}{$\sim$}}\hidewidth\cr}}}
\newcommand{\eps}{\varepsilon}
\DeclareMathOperator{\supp}{supp}
\DeclareMathOperator{\cum}{Cum}
\theoremstyle{theorem}
\newtheorem{theorem}{Theorem}[section]
\newtheorem{corollary}[theorem]{Corollary}
\newtheorem{proposition}[theorem]{Proposition}
\newtheorem{lemma}[theorem]{Lemma}
\theoremstyle{definition}
\newtheorem{remark}[theorem]{Remark}
\begin{document}

\title{Central Limit Theorems for Diophantine approximants}

\author{Michael Bj\"orklund}
\address{Department of Mathematics, Chalmers, Gothenburg, Sweden}
\email{micbjo@chalmers.se}

\author{Alexander Gorodnik}
\address{University of Bristol, Bristol, UK}
\email{a.gorodnik@bristol.ac.uk}

\keywords{Diophantine approximation, Central limit theorems}

\subjclass[2010]{Primary: 11K60; Secondary: 60F05, 37A17}

\date{}


\begin{abstract}
In this paper we study counting functions representing the number of 
solutions of systems of linear inequalities which arise in the theory of 
Diophantine approximation. We develop a method that allows us to 
explain the random-like behavior that these functions exhibit and prove 
a Central Limit Theorem for them. Our approach is based on a quantitative 
study of higher-order correlations for functions defined on the space of 
lattices and a novel technique for estimating cumulants of Siegel transforms. 
\end{abstract}

\maketitle

\addtocontents{toc}{\setcounter{tocdepth}{1}} 
{\small 
\tableofcontents}

\section{Introduction and main results}

\subsection{Motivation}

Many objects which arise in Diophantine Geometry exhibit random-like behavior. For instance, the classical Khinchin theorem in Diophantine approximation can be interpreted as the Borel-Cantelli Property for quasi-independent events, while Schmidt's 
quantitative generalization of Khinchin's Theorem is analogous to the Law of Large Numbers. 
One might ask whether much deeper probabilistic phenomena also take place.
In this  paper, we develop a general framework which allows us to capture certain quasi-independence properties which govern the asymptotic behavior of arithmetic counting functions. We expect that the new methods will have a wide range of applications in Diophantine Geometry; here we apply the techniques to study the distribution of counting functions involving Diophantine approximants.

A basic problem in Diophantine approximation is to find ``good'' rational approximants of vectors $\overline{u} = (u_1,\ldots,u_m) \in \bR^m$. More precisely, given positive numbers $w_1,\ldots,w_m$, which we shall assume sum to one, 
and positive constants $\vartheta_1,\ldots,\vartheta_m$, we consider the system of inequalities
\begin{equation}
\label{eq:diop0}
\big|u_j - \frac{p_j}{q}\big| \leq \frac{\vartheta_j}{q^{1+w_j}}, \quad \textrm{for $j = 1,\ldots,m$},
\end{equation} 
with $(\overline{p},q)\in \mathbb{Z}^m\times \mathbb{N}$. It is well-known that for Lebesgue-almost all $\overline{u} \in\bR^m$,
the system \eqref{eq:diop0} has infinitely many solutions $(\overline{p},q)\in \mathbb{Z}^m\times \mathbb{N}$, so it is natural to 
try to count solutions in bounded regions, which leads us to the counting function
$$
\Delta_T(\overline{u}):=|\{(\overline{p},q)\in \mathbb{Z}^m\times \mathbb{N}:\, 1\le q< T \qen \hbox{ \eqref{eq:diop0} holds}  \}|.
$$
W. Schmidt \cite{sch1} proved that for Lebesgue-almost all $\overline{u}\in [0,1]^m$,
\begin{equation}\label{eq:sch0}
\Delta_T(\overline{u})=C_m\, \log T+O_{\overline{u},\eps}\left((\log T)^{1/2+\eps}\right),\quad \hbox{for all $\eps>0,$}
\end{equation}
where $C_m:=2^m\vartheta_1\cdots \vartheta_m$. One may view this as an analogue of the Law of Large Numbers,
the heuristic for this analogy runs along the following lines. First, note that
$$
\Delta_T(\overline{u})\approx\sum_{s=0}^{\lfloor \log T\rfloor}  \Delta^{(s)}(\overline{u}),  
$$
where 
$$
\Delta^{(s)}(\overline{u}):=|\{(\overline{p},q)\in \mathbb{Z}^m\times \mathbb{N}:\, e^s\le q< e^{s+1} \qen \hbox{ \eqref{eq:diop0} holds}  \}|.
$$
If one could prove that the functions $\Delta^{(s_1)}(\cdot)$ and $\Delta^{(s_2)}(\cdot)$ were ``quasi-independent'' random variables
on $[0,1]^m$, at least when $s_1$, $s_2$, and $|s_1-s_2|$ are sufficiently large, then \eqref{eq:sch0} would follow by some version
of the Law of Large Numbers. Moreover, the same heuristic further suggests that, 
in addition to the Law of Large Numbers, a Central Limit Theorem and perhaps other probabilistic limit laws also hold for $\Delta_T(\cdot)$. 

\medskip

In this paper, we put the above heuristic on firm ground. We do so by 
representing $\Delta^{(s)}(\cdot)$ as a function on
the space of unimodular lattices. It turns out that the ``quasi-independence'' of the family $(\Delta^{(s)})$ that we are trying 
to capture can be translated into 
the dynamical language of higher-order mixing
for a subgroup of linear transformations acting on the space of lattices.

\subsection{Main results}

We are not the first to explore Central Limit Theorems for Diophantine approximants. The one-dimensional case $(m=1)$
has been thoroughly investigated by Leveque \cite{lev1,lev2}, Philipp \cite{ph}, and Fuchs \cite{f}, leading to
the following result proved by Fuchs \cite{f}: there exists an explicit $\sigma > 0$ such that the counting function
$$
\Delta_T(u):=|\{(p,q)\in \bZ\times \bN:\, 1\le q<T,\quad \left|u-p/q\right|< \vartheta\,q^{-2}\}|
$$
satisfies
\begin{equation}  
\label{eq:CLT1}
\left|\left\{u\in [0,1]:\, \frac{\Delta_T(u)-2c\,\log T}{(\log T\cdot\log\log T)^{1/2}}<\xi\right\}\right|
\longrightarrow \hbox{Norm}_\sigma(\xi)
\end{equation}
as $T\to \infty$, where 
$$
\hbox{Norm}_\sigma(\xi):=(2\pi\sigma)^{-1/2} \int_{-\infty}^\xi e^{-s^2/(2\sigma)}\,ds
$$
denotes the normal distribution with the variance $\sigma$. 

\medskip

Central Limit Theorems in higher dimensions when $w_1=\cdots=w_m=1/m$ have recently been studied Dolgopyat, Fayad and Vinogradov \cite{dfv}. In this paper, using very different techniques, we establish the following CLT for \emph{general} exponents 
$w_1,\ldots,w_m$.

\begin{theorem}
	\label{th:CLT_vectors}
Let $m\ge 2$. Then for every $\xi\in \bR$,
\begin{equation}
\label{eq:CLT2}
\left|\left\{\overline{u}\in [0,1]^m:\, \frac{\Delta_T(\overline{u})-C_m\,\log T}{(\log T)^{1/2}}<\xi\right\}\right|
\longrightarrow \hbox{\rm Norm}_{\sigma_m}(\xi)
\end{equation}
as $T\to\infty$,
where
\begin{align*}
\sigma_{m}&:=2 C_m\left(2\zeta(m)\zeta(m+1)^{-1}-1\right),
\end{align*}
and $\zeta$ denotes Riemann's $\zeta$-function.
\end{theorem}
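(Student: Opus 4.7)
The plan is to follow the dynamical heuristic sketched in the introduction. As a first step, I would use a Dani-type correspondence to realise $\Delta^{(s)}(\overline{u})$ as a Siegel transform on the homogeneous space $X=\SL_{m+1}(\bR)/\SL_{m+1}(\bZ)$. Setting
$$
u_{\overline{u}}:=\begin{pmatrix} I_m & \overline{u}\\ 0 & 1\end{pmatrix},\qquad
a_s:=\mathrm{diag}(e^{w_1 s},\ldots,e^{w_m s},e^{-s}),
$$
we have $a_s\in \SL_{m+1}(\bR)$ because $w_1+\cdots+w_m=1$, and a direct computation shows that a pair $(\overline{p},q)\in\bZ^m\times \bN$ with $e^s\le q<e^{s+1}$ solves \eqref{eq:diop0} precisely when the non-zero vector $a_s u_{\overline{u}}(-\overline{p},q)^T$ lies in the fixed box $B=\prod_{j=1}^m[-\vartheta_j,\vartheta_j]\times[1,e)\subset \bR^{m+1}$. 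Consequently,
$$
\Delta^{(s)}(\overline{u})=\widehat{\chi_B}(a_s u_{\overline{u}}\bZ^{m+1}),
$$
where $\widehat{f}(\Lambda):=\sum_{v\in \Lambda\setminus\{0\}}f(v)$ denotes the Siegel transform. In this formulation \eqref{eq:CLT2} becomes a distributional limit theorem for the ergodic sum $\sum_{s=0}^N \widehat{\chi_B}(a_s u_{\overline{u}}\bZ^{m+1})$ along the unipotent orbit $\{u_{\overline{u}}\bZ^{m+1}:\overline{u}\in[0,1]^m\}$ equipped with Lebesgue measure.

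To establish this limit law I would use the method of cumulants: it suffices to check that, after centring and normalising by $(\log T)^{1/2}$, the second cumulant of $\sum_{s\le N}\widehat{\chi_B}\circ a_s u_\bullet$ converges to $\sigma_m$ and that all cumulants of order $k\ge 3$ are $o(N^{k/2})$. Since cumulants are explicit alternating sums over set partitions of joint moments of the form
$$
\int_{[0,1]^m}\prod_{i=1}^k \widehat{\chi_B}(a_{s_i}u_{\overline{u}}\bZ^{m+1})\,d\overline{u},
$$
what is needed are quantitative $k$-point correlation estimates for Siegel transforms along the translates $a_s u_{\overline{u}}$.

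The heart of the argument, and the step I expect to be the main technical obstacle, is the proof of these effective higher-order correlation bounds and the associated cumulant estimates. Two difficulties have to be handled simultaneously: the non-compactness of $X$, which makes $\widehat{\chi_B}$ unbounded with tails governed by Minkowski-type functions on the space of lattices; and the delicate combinatorial cancellation built into a cumulant, which only produces the needed smallness when the random variables $\widehat{\chi_B}\circ a_{s_i}u_\bullet$ are approximately jointly independent as the gaps $|s_i-s_j|$ grow. My plan is to combine an effective multiple equidistribution statement for unipotent translates, giving quantitative correlation decay between well-separated blocks of times, with a Minkowski-style tail truncation of $\widehat{\chi_B}$ to control unboundedness, and then to argue that only ``short-range'' clusters of indices contribute to the partition sum. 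This should yield cumulant bounds of the shape $O(N^{(k-1)/2})$, which after normalisation drive all cumulants of order $\ge 3$ to zero.

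Finally, the explicit constants $C_m$ and $\sigma_m$ would be identified using Siegel's integral formula and Rogers' second-moment formula on $X$. Siegel's formula gives $\int_X \widehat{\chi_B}\,d\mu_X=\mathrm{vol}(B)=C_m$, which reproduces Schmidt's main term in \eqref{eq:sch0}; the off-diagonal pair correlations converge to $C_m^2$ by mixing, so the entire variance is carried by the diagonal contributions $\int_X \widehat{\chi_B}^2\,d\mu_X$. Rogers' formula evaluates the latter in terms of $\zeta(m)$ and $\zeta(m+1)$, producing precisely the factor $2\zeta(m)\zeta(m+1)^{-1}-1$ appearing in the definition of $\sigma_m$.
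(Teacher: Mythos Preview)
Your proposal follows essentially the same route as the paper: the Dani-type correspondence, the cumulant method, quantitative higher-order correlation estimates for translates of the unipotent torus, truncation of the Siegel transform to handle unboundedness, and the Siegel and Rogers formulas for the constants. This is exactly the architecture of the paper's proof (which treats the general case $n\ge 1$ and recovers Theorem~\ref{th:CLT_vectors} as the special case $n=1$).

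Two small points where your sketch is imprecise. First, the region you should use is not the fixed box $B=\prod_j[-\vartheta_j,\vartheta_j]\times[1,e)$ but rather the curved region $\Omega_e=\{(\overline{x},y):1\le |y|<e,\ |x_j|<\vartheta_j|y|^{-w_j}\}$; with your box the condition $a_s u_{\overline{u}}(-\overline{p},q)^T\in B$ does not coincide with \eqref{eq:diop0} except at $q=e^s$. Second, your description of the variance computation is not quite right: the off-diagonal correlations $\Theta_\infty(s)=\int_{\cX}(\hat\chi\circ a^s)\hat\chi\,d\mu_{\cX}-C_m^2$ do \emph{not} vanish for $s\ne 0$, and the variance is the full sum $\sum_{s\in\bZ}\Theta_\infty(s)$. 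In the paper this sum is computed by applying Rogers' formula and then summing over $s$ (equivalently, replacing $\chi$ by $\Xi=\sum_{s}\chi\circ a^s$), which introduces an extra sum over $p,q\ge 1$ of $\max(p,q)^{-(m+1)}$; it is this double sum that produces the factor $2\zeta(m)\zeta(m+1)^{-1}-1$, not just the diagonal term $\int_{\cX}\hat\chi^2\,d\mu_{\cX}$.
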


Our proof of Theorem \ref{th:CLT_vectors}, as well as the proof in \cite{dfv}, proceeds by
interpreting $\Delta_T(\cdot)$ as a function on a certain subset $\cY$ of the space of all unimodular 
lattices in $\bR^{m+1}$, and then studies how the sequence $a^s\cY$, where $a$ is a fixed
linear transformation of $\bR^{m+1}$, distributes inside this space. However, the arguments in the two papers 
follow very different routes. The proof in \cite{dfv} contains a novel refinement of the martingale method
(this approach was initiated in this setting by Le Borgne \cite{leb}). Here, one crucially uses the fact that
when $w_1=\cdots=w_m=1/m$, then the set $\cY$ is an unstable manifold for the action of $a$ on the
space of lattices. For general weights, $\cY$ has strictly smaller dimension than the unstable leaves,
and it seems challenging to apply martingale approximation techniques. Instead, our method 
involves a quantitative analysis of higher-order correlations for functions on the space of lattices. We establish an 
asymptotic formula for correlations of arbitrary orders and use this formula to compute limits of all the moments 
of $\Delta_T(\cdot)$ directly. One of the key innovations of our approach 
is an efficient way of estimating sums of cumulants (alternating sums of moments) developed in our recent work 
\cite{BG}. 

\medskip

We also investigate the more general problem of Diophantine approximation for systems of linear forms. The space 
$\hbox{M}_{m,n}(\bR)$ of $m$ linear forms in $n$ real variables is parametrized by real $m\times n$ matrices.
Given $u\in\hbox{M}_{m,n}(\bR)$, we consider the family $(L_u^{(i)})$ of linear forms defined by
$$
L_u^{(i)}(x_1,\ldots,x_n)=\sum_{j=1}^n u_{ij} x_j,\quad i=1,\ldots,m.
$$
Let $\|\cdot\|$ be a norm on $\bR^n$. Fix $\vartheta_1,\ldots,\vartheta_m > 0$ and $w_1,\ldots,w_m>0$ which satisfy
$$
w_1+\cdots+w_m=n,
$$
and consider the system of Diophantine inequalities
\begin{equation}
\label{eq:diop2_0}
\left|p_i+L_u^{(i)}(q_1,\ldots,q_n)\right|< \vartheta_i\, \|\overline{q}\|^{-w_i},\quad i=1,\ldots,m,
\end{equation}
with $(\overline{p},\overline{q})=(p_1,\ldots,p_m,q_1,\ldots,q_n)\in \mathbb{Z}^m\times (\mathbb{Z}^n\backslash \{0\})$.
The number of solutions of this system with the norm of the ``denominator'' $\overline{q}$ bounded by $T$ is given by
\begin{equation}
\label{eq:N_T}
\Delta_T({u}):=\left|\left\{(\overline{p},\overline{q})\in \mathbb{Z}^m\times \mathbb{Z}^n:\, 0< \|\overline{q}\|< T \qen \hbox{ \eqref{eq:diop2_0} holds}  \right\}\right|.
\end{equation}
Our main result in this paper is the following generalization of Theorem \ref{th:CLT_vectors}.

\begin{theorem}
	\label{th:CLT_forms}
	If $m\ge 2$, then for every $\xi\in \bR$,
	\begin{equation}
	\label{eq:CLT3}
	\left|\left\{{u}\in \hbox{\rm M}_{m,n}([0,1]):\, \frac{\Delta_T({u})-C_{m,n}\,\log T}{(\log T)^{1/2}}<\xi\right\}\right|
	\longrightarrow \hbox{\rm Norm}_{\sigma_{m,n}}(\xi)
	\end{equation}
	as $T\to\infty$, where
\[
C_{m,n} :=C_m\omega_n\quad\hbox{with $\omega_n:=\int_{S^{n-1}} \|\overline{z}\|^{-n}\,d\overline{z}$}
\]
and
\[
\sigma_{m,n}:=2C_{m,n}\left(2\zeta(m+n-1)\zeta(m+n)^{-1}-1\right).
\]
\end{theorem}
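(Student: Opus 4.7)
The plan is to follow the strategy outlined in the introduction: encode $\Delta_T(u)$ as a sum of Siegel transforms along a one-parameter diagonal flow on the space $X_{m+n}=\SL_{m+n}(\bR)/\SL_{m+n}(\bZ)$ of unimodular lattices in $\bR^{m+n}$, and then establish the CLT by controlling all the cumulants of the resulting sum. Theorem~\ref{th:CLT_vectors} is the special case $n=1$ of Theorem~\ref{th:CLT_forms}, so it suffices to prove the latter.

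\emph{Geometric encoding.} For $u\in M_{m,n}(\bR)$, set $\Lambda_u=u_*\bZ^{m+n}$, with
\[
u_*=\begin{pmatrix} I_m & u\\ 0 & I_n\end{pmatrix},
\]
and define $a_s:=\operatorname{diag}(e^{w_1 s},\ldots,e^{w_m s},e^{-s},\ldots,e^{-s})$, which lies in $\SL_{m+n}(\bR)$ because $w_1+\cdots+w_m=n$. A nonzero vector $(\overline{p},\overline{q})\in\bZ^{m}\times\bZ^n$ with $\|\overline{q}\|\in[e^s,e^{s+1})$ satisfies \eqref{eq:diop2_0} if and only if $a_s u_*(\overline{p},\overline{q})$ lies in the fixed bounded region
\[
\Omega:=\{(x,y)\in\bR^m\times\bR^n:\,|x_i|<\vartheta_i\|y\|^{-w_i},\ 1\le\|y\|<e\}.
\]
Letting $\widetilde{\chi}_\Omega$ denote the Siegel transform of $\chi_\Omega$, a dyadic decomposition yields
\[
F_T(u):=\sum_{s=0}^{\lfloor\log T\rfloor-1}\widetilde{\chi}_\Omega(a_s\Lambda_u)=\Delta_T(u)+O(1),
\]
viewed as a random variable on $M_{m,n}([0,1])$ with Lebesgue measure.

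\emph{Cumulant method.} It suffices to establish
\begin{enumerate}
\item[(i)] $\bE[F_T]=C_{m,n}\log T+O(1)$,
\item[(ii)] $\Var[F_T]=\sigma_{m,n}\log T+o(\log T)$,
\item[(iii)] $\cum_k(F_T)=o((\log T)^{k/2})$ for every $k\ge 3$.
\end{enumerate}
For (i), one applies Siegel's mean-value formula $\int_{X_{m+n}}\widetilde{\chi}_\Omega\,dm=\operatorname{vol}(\Omega)$; after rewriting the volume in polar coordinates in the $y$-variable, the constant $\omega_n=\int_{S^{n-1}}\|z\|^{-n}dz$ appears naturally, and summing over $s$ produces $C_{m,n}=C_m\omega_n$. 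For (ii), a Rogers-type formula for the second moment evaluates $\bE[\widetilde{\chi}_\Omega(a_{s_1}\Lambda_u)\widetilde{\chi}_\Omega(a_{s_2}\Lambda_u)]$ as a sum over pairs of integer vectors: the pairs of linearly independent vectors contribute a term that cancels $(\bE[F_T])^2$, while the pairs of proportional vectors yield, after a primitive/imprimitive decomposition of $\bZ^{m+n}\setminus\{0\}$, the zeta factor $2\zeta(m+n-1)\zeta(m+n)^{-1}-1$.

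\emph{Main obstacle.} Item (iii) is the heart of the matter and where the cumulant technique of \cite{BG} is essential. Two features make it delicate: first, $\widetilde{\chi}_\Omega$ is unbounded on $X_{m+n}$ because of cuspidal excursions, so moment bounds demand careful truncation; and second, for general weights the image of $u\mapsto u_*\bZ^{m+n}$ is a proper submanifold of the unstable leaf of $a_s$, which rules out the martingale approximation used in \cite{leb, dfv}. The plan is to (a) establish a quantitative $k$-fold correlation asymptotic
\[
\bE\!\left[\prod_{j=1}^{k}\widetilde{\chi}_\Omega(a_{s_j}\Lambda_u)\right]=\sum_{\pi}\prod_{B\in\pi} M_B(s_j:j\in B)+\text{error},
\]
indexed by set partitions $\pi$ of $\{1,\ldots,k\}$, with error decaying effectively in the minimal gap $\min_{i\ne j}|s_i-s_j|$, using quantitative higher-order mixing on $X_{m+n}$ together with non-divergence estimates; (b) invoke the combinatorial identity expressing cumulants as alternating sums of moments over set partitions, so that only the fully connected partition survives in leading order; and (c) show, as in \cite{BG}, that this surviving contribution is of size $O(\log T)=o((\log T)^{k/2})$ for every $k\ge 3$. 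With (i)--(iii) in hand, the method of moments gives convergence of $(F_T-\bE F_T)/(\log T)^{1/2}$ to $\hbox{Norm}_{\sigma_{m,n}}$, which is \eqref{eq:CLT3}.
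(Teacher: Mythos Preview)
Your overall strategy matches the paper's: geometric encoding via the Siegel transform, then the cumulant method of \cite{BG} combined with quantitative equidistribution and non-divergence estimates. However, there is a recurring conflation in your proposal that obscures the central technical point. In items (i) and (ii) you invoke Siegel's and Rogers' formulas to compute $\bE[\cdot]$, but those formulas evaluate integrals over the \emph{full space of lattices} $\cX$ with respect to $\mu_{\cX}$, whereas your expectation is over the torus $\cY=\{\Lambda_u:u\in \hbox{M}_{m,n}([0,1])\}$ with respect to $\mu_{\cY}$. Bridging these two measures is precisely the content of the paper's main analytic input (Theorem~\ref{th:corr} and Corollary~\ref{cor:corr0}): a quantitative higher-order equidistribution estimate for the submanifolds $a^s\cY$ inside $\cX$, not merely ``mixing on $X_{m+n}$''. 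For the mean, the paper in fact bypasses this issue by computing $\int_{\cY}\hat\chi(a^sy)\,d\mu_{\cY}$ directly via an unfolding argument (Lemma~\ref{l:sum_mu}); for the variance and the higher cumulants, Corollary~\ref{cor:corr0} is used to convert $\cY$-integrals into $\cX$-integrals, and only then does Rogers' formula enter.

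Your item (iii) is also too schematic. The statement that ``only the fully connected partition survives'' with contribution $O(\log T)$ is not the mechanism. The paper decomposes $\{0,\ldots,N-1\}^r$ into clustered and separated tuples (Section~\ref{sec:cumm_comp}): for separated tuples, Corollary~\ref{cor:corr0} shows the joint moment factors along a nontrivial partition $\cQ'$, so the cumulant is close to a \emph{conditional} cumulant, which vanishes identically by Proposition~\ref{p:cum_cond}; for clustered tuples a counting bound suffices. Moreover, because $\hat\chi$ is unbounded, this entire argument cannot be run for $\hat\chi$ directly but only for a smoothed, truncated approximation $\hat f_\eps^{(L)}$, and the $C^k$-norms appearing in the error of Corollary~\ref{cor:corr0} blow up like $L\eps^{-k}$. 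The heart of Sections~\ref{sec:CLT_siegel_smooth}--\ref{sec:CLT_counting} is the interlocking choice of parameters $M,K,L,\eps,\gamma$ so that this blow-up is beaten by the exponential decay from equidistribution; your proposal acknowledges truncation is needed but gives no indication of how these competing scales are balanced, and that balance is where the condition $m\ge 2$ (via Proposition~\ref{prop:sup2} and the inequality $(r-(m+n-1))^+/(m+n)<r/2-1$) actually enters.
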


The special case $w_1 =\ldots=w_m=n/m$ was proved earlier in \cite{dfv}.

\subsection{An outline of the proof of Theorem \ref{th:CLT_forms}}
\label{sec:outline}

We begin by observing that $\Delta_T(\cdot)$ can be interpreted 
as a function on the space of lattices in $\bR^{m+n}$.
Given $u\in \hbox{M}_{m,n}([0,1])$, we define the unimodular lattice $\Lambda_u$ in $\bR^{m+n}$ by
\begin{equation}
\label{eq:lll}
\Lambda_{u}:=\left\{\left(p_1+\sum_{j=1}^n u_{1j} q_j,\ldots, p_m+\sum_{j=1}^n u_{mj} q_j, \overline{q}\right):\, (\overline{p},\overline{q})\in \mathbb{Z}^m\times \mathbb{Z}^n\right\},
\end{equation}
and we see that
$$
\Delta_T(u)=|\Lambda_u\cap \Omega_T|+O(1), \quad \textrm{for $T > 0$},
$$
where $\Omega_T$ denotes the domain
\begin{equation}
\label{eq:omega_t}
\Omega_T:=\left\{(\overline{x},\overline{y})\in \bR^{m+n}:\, 1\le \|\overline{y}\|<T,\, |x_i|<\vartheta_i\,\|\overline{y}\|^{-w_i}, \, i=1,\ldots, m\right\}.
\end{equation}
The space $\cX$ of unimodular lattices in $\bR^{m+n}$  is naturally a homogeneous space of
the group  $\hbox{SL}_{m+n}(\bR)$
equipped with the invariant probability measure $\mu_\cX$. The set
$$
\cY:=\{\Lambda_u:\, u\in \hbox{M}_{m,n}([0,1])\}
$$
is a $mn$-dimensional torus embedded in $\cX$, and we equip $\cY$ with the Haar probability measure $\mu_\cY$, 
interpreted as a Borel measure on $\cX$. 

\medskip

We further observe (see Section \ref{sec:CLT_counting} for more details) that each domain $\Omega_T$ can be tessellated 
using a fixed diagonal matrix $a$ in $\hbox{SL}_{m+n}(\bR)$, so that for a suitable function $\hat\chi:\cX\to\bR$, we have 
$$
|\Lambda\cap \Omega_T|\approx\sum_{s=0}^{N-1} \hat\chi(a^s\Lambda)\quad \hbox{for $\Lambda\in\cX$.}
$$
Hence we are left with analyzing the distribution of values for the sums
$\sum_{s=0}^N \hat \chi(a^sy)$ with $y\in \cY$.
This will allow us to apply techniques developed in our previous work \cite{BG}, as well as in \cite{BEG} (joint with M. Einsiedler). 
Intuitively, our arguments will be guided by the hope that 
the observables $\hat \chi\circ a^s$ are ``quasi-independent'' 
with respect to $\mu_{\cY}$. Due to the discontinuity and unboundedness of the function $\hat\chi$ on $\cX$, 
it gets quite technical to formulate this quasi-independence directly. Instead, we shall argue in steps. 

We begin in Section \ref{sec:correlations} by establishing quasi-independence for observables of 
the form $\phi \circ a^s$, where $\phi$ is a smooth and compactly supported function on $\cX$.
This amounts to an asymptotic formula (Corollary \ref{cor:corr0}) for the higher-order correlations
\begin{equation}
\label{eq:cor000}
\int_{\cY} \phi_1(a^{s_1}y)\cdots \phi_r(a^{s_r}y)\, d\mu_\cY(y) \quad \hbox{with $\phi_1,\ldots,\phi_r\in C_c^\infty(\cX).$}
\end{equation}
It will be crucial for our arguments later that the error term in this formula is explicit in terms of the exponents $s_1,\ldots,s_r$ 
and in (certain norms of) the functions $\phi_1,\ldots,\phi_r$. In Section \ref{sec:CLT_compact},
we use these estimates to prove the Central Limit Theorems for sums of the form
$$
F_N(y) :=\sum_{s=0}^{N-1} \left(\phi(a^sy)-\mu_\cY(\phi)\right)\quad \hbox{with $\phi\in C_c^\infty(\cX)$ and $y\in \cY$}.
$$
To do this, we use an adaption of the classical Cumulant Method (see Proposition \ref{th:CLT}), 
which provides bounds on cumulants (alternating sums of moments) given estimates on expressions as in \eqref{eq:cor000}, at least in certain ranges of the parameters $(s_1,\ldots,s_r)$. Here we shall exploit the decomposition \eqref{eq:decomp} into
``seperated''/``clustered'' tuples. 
We stress that the cumulant $\cum^{(r)}(F_N)$ of order $r$ can be expressed as a sum of $O(N^r)$
terms, normalized by $N^{r/2}$, so that in order to prove that it vanishes asymptotically, we require more than just square-root cancellation; however, the error term in the asymptotic formula for \eqref{eq:cor000} is rather weak. Nonetheless, by using intricate combinatorial 
cancellations of cumulants, we can establish the required bounds. 

In order to extend the method in Section \ref{sec:CLT_compact} 
to the kind of \emph{unbounded} functions which arise in our subsequent approximation 
arguments we have to investigate possible escapes of mass for the sequence of tori $a^s\cY$
inside the space $\cX$. In Section \ref{sec:non_div}, we prove several results 
in this direction (see e.g. Proposition \ref{p:div1}), as well as $L^p$-bounds (see Propositions \ref{prop:sup} and \ref{prop:sup2}).
We stress that the general non-divergence estimates for unipotent flows developed by Kleinbock-Margulis \cite{km1} are not 
sufficient for our purposes, and in particular, the exact value of the exponent in Proposition \ref{p:div1} will be crucial for our argument.
The proof of the $L^2$-norm bound in Proposition \ref{prop:sup2} is especially interesting in this regard since it uncovers 
that the escape of mass is related to delicate arithmetic questions; our arguments require careful estimates on the number of solutions of certain Diophantine equations.

To make the technical passages in the final steps of the proof of Theorem \ref{th:CLT_forms} a bit more readable, we shall
devote Section \ref{sec:CLT_siegel_smooth} to Central Limit Theorems for sums of the form
$$
\sum_{s=0}^{N-1} \hat f(a^sy)\quad\hbox{for $y\in \cY$},
$$
where $f$ is a smooth and compactly supported function on $\bR^{m+n}$, and $\hat{f}$ denotes the Siegel transform of $f$ (see Section \ref{sec:sieg_tran} for definitions). We stress that even though $f$ is assumed to be bounded, $\hat{f}$ is \emph{unbounded} on $\cX$. To prove
the Central Limit Theorems in this setting, we approximate $\hat f$ by compactly supported functions on $\cX$ and then use the 
estimates from Section \ref{sec:CLT_compact}. However, the bounds in these estimates crucially depend on the order of 
approximation, so this step requires a delicate analysis of the error terms. The non-divergence results established in Section \ref{sec:non_div} play important role here.

Finally, to prove the Central Limit Theorem for the function $\hat{\chi}$ (which is the Siegel transform of an indicator function on a 
nice bounded domain in $\bR^{m+n}$), and thus establish Theorem \ref{th:CLT_forms}, we need to approximate $\chi$ with smooth functions, and show that the arguments in Section \ref{sec:CLT_siegel_smooth} can be adapted to certain \emph{sequences} of Siegel transforms of smooth and compactly supported functions. This will be done in Section \ref{sec:CLT_counting}.

\section{Estimates on higher-order correlations}\label{sec:correlations}
\label{sec:esthoc}

Let $\mathcal{X}$ denote the space of unimodular lattices in $\bR^{m+n}$.
Setting 
$$
G:=\hbox{SL}_{m+n}(\bR)\quad\hbox{ and }\quad \Gamma:=\hbox{SL}_{m+n}(\bZ),
$$
we may consider the space $\cX$ as a homogeneous space under the linear action of the group $G$,
so that 
$$
\cX\simeq G/\Gamma.
$$
Let $\mu_{\cX}$ denote the $G$-invariant probability measure on $\cX$. 

\medskip

We fix $m,n\ge 1$  and denote by $U$ the subgroup
\begin{equation}
\label{eq:u}
U:=\left\{
\left(
\begin{tabular}{ll}
$I_m$ & $u$\\
$0$ & $I_n$
\end{tabular}
\right):\, u\in \hbox{M}_{m,n}(\mathbb{R})
\right\} < G,
\end{equation}
and set $\mathcal{Y}:=U\mathbb{Z}^{m+n}\subset \mathcal{X}$. Geometrically, $\cY$ can be visualized as a $m n$-dimensional torus embedded in the spaces of lattices $\cX$. We denote by $\mu_\cY$ the probability measure on $\mathcal{Y}$ induced by the 
Lebesgue probability measure on $\hbox{M}_{m,n}([0,1])$, and we note that $\cY$ corresponds to the collection of unimodular 
lattices $\Lambda_u$, for $u\in \hbox{M}_{m,n}([0,1))$, introduced earlier in \eqref{eq:lll}. 

\medskip

Let us further fix positive numbers $w_1,\ldots,w_{m+n}$ satisfying 
$$
\sum_{i=1}^{m}w_i=\sum_{i=m+1}^{m+n}w_i,
$$
and denote by $(a_t)$ the one-parameter semi-group
\begin{equation}
\label{eq:at}
a_t:=\hbox{diag}\left(e^{w_1 t},\ldots, e^{w_{m}t},e^{-w_{m+1} t},\ldots, e^{-w_{m+n}t} \right),\quad t>0.
\end{equation}
The aim of this section is to analyze the asymptotic behavior of $a_t\mathcal{Y} \subset \mathcal{X}$ as $t\to\infty$, and 
investigate ``decoupling'' of correlations of the form
\begin{equation}
\label{eq:corr_Y}
\int_{\cY}\phi_1(a_{t_1}y)\ldots\phi_r(a_{t_r}y)\, d\mu_\cY(y)\quad \hbox{ for $\phi_1,\ldots,\phi_r\in C_c^\infty(\cX)$},
\end{equation}
for ``large'' $t_1,\ldots,t_r > 0$. It will be essential for our subsequent argument that the error terms in this
``decoupling'' are explicit in terms of the parameters $t_1,\ldots,t_r>0$ and suitable norms of the functions $\phi_1,\ldots, \phi_r$,
which we now introduce. 

\medskip

Every $Y \in \hbox{Lie}(G)$ defines a first order differential operator 
$\cD_Y$ on $C_c^\infty(\cX)$ by
$$
\cD_Y(\phi)(x):=\frac{d}{dt}\phi(\exp(tY)x)|_{t=0}.
$$
If we fix an (ordered) basis $\{Y_1,\ldots,Y_r\}$ of $\hbox{Lie}(G)$, then every monomial $Z=Y_1^{\ell_1}\cdots Y_r^{\ell_r}$
defines a differential operator by
\begin{equation}
\label{eq:diff}
\cD_Z:=\cD_{Y_1}^{\ell_1}\cdots \cD_{Y_r}^{\ell_r},
\end{equation}
of \emph{degree} $\deg(Z) = \ell_1+\cdots +\ell_r$. For $k\ge 1$ and $\phi\in C_c^\infty(\cX)$,  we define the norm
\begin{equation}
\label{eq:sob_def0}
\|\phi\|_{L^2_k(\cX)}:=\left( \sum_{\deg(Z)\le k} \int_{\cX} | (\cD_Z\phi)(x)|^2\, d\mu_\cX(x)\right)^{1/2}.
\end{equation}

The starting point of our discussion is a well-known quantitative estimate on correlations of smooth functions on $\cX$:

\begin{theorem}\label{th:cor0}
	There exist $\gamma>0$ and $k\ge 1$ such that for all $\phi_1,\phi_2\in C_c^\infty(\cX)$ and $g\in G$,
	$$
	\int_{\cX} \phi_1(g x)\phi_2(x)\, d\mu_\cX(x)=
	\left(\int_{\mathcal{X}}\phi_1 \, d\mu_\cX\right)
	\left(\int_{\mathcal{X}}\phi_2 \, d\mu_\cX\right)+O\left(\|g\|^{-\gamma}\, \|\phi_1\|_{L^2_k(\cX)}\|\phi_1\|_{L^2_k(\cX)}\right).
	$$
\end{theorem}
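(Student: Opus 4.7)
The plan is to reduce the claim to a quantitative mixing estimate for the unitary representation $\pi$ of $G$ on $L^2_0(\cX)$, the orthogonal complement of the constants. Writing $c_i := \int_{\cX} \phi_i\, d\mu_\cX$ and $\psi_i := \phi_i - c_i$, the $G$-invariance of $\mu_\cX$ together with $\int_\cX \psi_i\, d\mu_\cX = 0$ gives
\[
\int_{\cX} \phi_1(gx)\phi_2(x)\, d\mu_\cX(x) = c_1 c_2 + \int_{\cX} \psi_1(gx)\psi_2(x)\, d\mu_\cX(x),
\]
so the main term pops out immediately and the task reduces to bounding the matrix coefficient $\langle \pi(g)\psi_1, \overline{\psi_2}\rangle$ by $\|g\|^{-\gamma}$ times a suitable Sobolev norm.

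The first key input is the existence of a uniform spectral gap for $\pi$: when $m+n\ge 3$ the group $G=\SL_{m+n}(\bR)$ has Kazhdan's property (T), and when $m+n=2$ one invokes Selberg's $3/16$ bound for $\SL_2(\bZ)$. In either case there exists a finite $p$ such that every $K$-finite matrix coefficient of $L^2_0(\cX)$ lies in $L^p(G)$. Combined with the Cowling-Haagerup-Howe bound on $K$-finite matrix coefficients in terms of a power of the Harish-Chandra $\Xi$-function, this yields, for $K$-finite $v_1, v_2\in L^2_0(\cX)$,
\[
|\langle \pi(g) v_1, v_2\rangle| \le \Xi(g)^{1/p}\,(\dim Kv_1)^{1/2}(\dim Kv_2)^{1/2}\, \|v_1\|_2 \|v_2\|_2.
\]
The standard asymptotic $\Xi(a)\ll e^{-\alpha\,\dist(a,e)}$ on the positive Weyl chamber, combined with the KAK decomposition and the observation that $\dist(g,e)$ is comparable to $\log\|g\|$ on a fixed truncation, then converts this into a bound of the form $\|g\|^{-\gamma'}$ for some $\gamma'>0$ depending only on $p$ and the chosen norm on $G$.

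The second step is to pass from $K$-finite to smooth vectors. Decomposing $\psi_i = \sum_{\tau\in\widehat{K}} \psi_i^\tau$ into isotypic components, one uses the fact that the dimensions $\dim\tau$ grow polynomially in the Casimir eigenvalue of $\tau$; summing the $K$-finite bound over $\tau$ and absorbing the polynomial factors into a Sobolev norm based on the differential operators $\cD_Z$ of \eqref{eq:diff} produces the claimed estimate with $\|\psi_i\|_{L^2_k(\cX)}$ for sufficiently large $k$ and some $\gamma\in(0,\gamma')$. The main technical obstacle is precisely this last passage: one must verify that the differential operators in the definition of $\|\cdot\|_{L^2_k(\cX)}$ dominate the Casimir, so that the sum over $K$-types converges uniformly in the vectors involved. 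Since all of these ingredients are by now standard, the cleanest presentation is to invoke the quantitative mixing theorems of Kleinbock-Margulis or Einsiedler-Margulis-Venkatesh, which package exactly this chain of reasoning and output an estimate of the form displayed in the theorem.
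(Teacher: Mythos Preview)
Your proposal is correct and in fact goes further than the paper itself: the paper does not prove this theorem at all but simply cites Kleinbock--Margulis \cite{km0,km2} as a black box. Your outline sketches the standard argument underlying those references (spectral gap via property (T) or Selberg, Cowling--Haagerup--Howe bounds for $K$-finite vectors, passage to smooth vectors via Sobolev norms), and ends by invoking the same sources, so the two are entirely consistent.
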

This theorem has a very long history that we will not attempt to survey here, but only mention that a result of this form 
can be found, for instance, in \cite{km0,km2}. 

\smallskip

\begin{center}
\emph{Let us from now on we fix $k\ge 1$ so that Theorem \ref{th:cor0} holds.} 
\end{center}

\smallskip

Our goal is to decouple the higher-order correlations in \eqref{eq:corr_Y}, but in order to state our results we first need to introduce a
family of finer norms on $C^\infty_c(\cX)$ than $(\|\cdot\|_{L^2(\cX)_k})$. Let us denote by $\|\cdot\|_{C^0}$ the uniform norm 
on $C_c(\cX)$. If we fix a right-invariant Riemannian metric on $G$, then it induces a metric $d$ on $\cX\simeq G/\Gamma$, 
which allows us to define the norms
$$
\|\phi\|_{Lip}:=\sup\left\{\frac{|\phi(x_1)-\phi(x_2)|}{d(x_1,x_2)}:\, x_1,x_2\in\cX, x_1\ne x_2  \right\},
$$
and
\begin{equation}
\label{eq:n_k}
\cN_k(\phi):=\max\left\{\|\phi\|_{C^0}, \|\phi\|_{Lip}, \|\phi\|_{L^2_k(\cX)}\right\},
\end{equation}
for $\phi \in C_c^\infty(\cX)$. We shall prove:

\begin{theorem}\label{th:corr}
There exists $\delta>0$ such that for every compact $\Omega\subset U$,  $f\in C_c^\infty(U)$ with $\hbox{\rm supp}(f)\subset \Omega$, $\phi_1,\ldots,\phi_r\in C_c^\infty(\mathcal{X})$,
$x_0\in \mathcal{X}$, and $t_1,\ldots,t_r>0$, we have 
\begin{align*}
\int_{U} f(u)\left(\prod_{i=1}^r \phi_i(a_{t_i}ux_0)\right)\,du=& \left(\int_{U}f(u)\,du\right) \prod_{i=1}^r\left(\int_{\mathcal{X}}\phi_i \, d\mu_\cX\right)\\
& +O_{x_0,\Omega,r}\left( e^{-\delta D(t_1,\ldots,t_r)}\, \|f\|_{C^k}\prod_{i=1}^r \cN_k(\phi_i)\right),
\end{align*}
where 
$$
D(t_1,\ldots,t_r):=\min\{t_i, |t_i-t_j|:\, 1\le i\ne j\le r\}.
$$
\end{theorem}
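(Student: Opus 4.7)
I would prove Theorem~\ref{th:corr} by induction on the number $r$ of test functions, building on two ingredients that are both ultimately extracted from Theorem~\ref{th:cor0}: effective equidistribution of the translated tori $a_t U x_0 \subset \cX$, and effective multiple mixing of the semigroup $(a_t)$ acting on $\cX$.

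The base case $r=1$ is an effective equidistribution statement for $a_{t_1} U x_0$ in $\cX$, with decay rate $e^{-\delta' t_1}$. I would obtain this from Theorem~\ref{th:cor0} by the Margulis thickening trick: thicken the orbit by convolving $\phi_1$ with a smooth bump of width $\rho$ supported in a small neighborhood of the identity in a subgroup $V$ transverse to $U$, apply the two-point mixing estimate to the thickened observable, and then unthicken using the smoothness of $f$ and the Lipschitz control built into $\cN_k(\phi_1)$. Optimizing $\rho$ against $e^{-\gamma t_1}$ yields the desired rate in terms of the norm $\cN_k$.

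For the inductive step I would reorder so that $t_1 \leq \cdots \leq t_r$, set $t_0 := 0$, and let $j^*$ be an index where $D = \min_{1 \leq j \leq r}(t_j - t_{j-1})$ is attained. If $j^* = 1$ (so $D = t_1$), define $\Phi(y) := \prod_{i=1}^{r} \phi_i(a_{t_i - t_1} y)$, so the integrand becomes $f(u)\Phi(a_{t_1}ux_0)$; the $r=1$ case reduces this to $(\int f)(\int_\cX \Phi\,d\mu_\cX)$ up to error, and I then invoke an auxiliary effective multiple-mixing estimate on $\cX$ (itself proved by an inner induction in which at each step one applies Theorem~\ref{th:cor0} across the largest available gap and recurses on the two resulting clusters) to write $\int_\cX \Phi\,d\mu_\cX = \prod_i \int_\cX \phi_i\,d\mu_\cX + O(e^{-\delta D})$. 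If $j^* \geq 2$, I split the product at index $j^*$ so that $\{t_i : i < j^*\}$ and $\{t_i : i \geq j^*\}$ are separated by the gap $D$; after a change of variables using the $(a_t)$-invariance of $\mu_\cX$, the relevant $\cX$-integral takes the form of a two-point correlation with parameter $a_D$, to which Theorem~\ref{th:cor0} supplies the decay $e^{-\gamma D}$, and the two resulting $U$-integrals are lower-order and handled by the induction hypothesis.

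\emph{Main obstacle.} The crux is controlling the growth of Sobolev and Lipschitz norms under composition with $a_{t_i}$: the norms $\cN_k(\phi_i \circ a_{t_i})$ inflate by factors of order $e^{Ct_i}$, which compete directly with the decay $e^{-\gamma D}$ provided by Theorem~\ref{th:cor0}. To keep the induction closing with the clean bound $e^{-\delta D}\|f\|_{C^k}\prod_i \cN_k(\phi_i)$, one has to split at the correct gap at each recursive step (so that the decay factor always dominates the inflation picked up at that step), exploit all three components of $\cN_k$ to trade $L^2_k$ estimates against $C^0$ and Lipschitz estimates via a smoothing/interpolation argument of the type used in the base case, and choose $\delta$ sufficiently small compared with $\gamma$, $k$, and the maximal eigenvalue exponent of $(a_t)$ so that the composite Sobolev blow-ups from the factors $\phi_i \circ a_{t_i - t_{j^*}}$ are absorbed and the final error retains the form advertised in the statement.
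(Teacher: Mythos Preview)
Your proposal has two genuine gaps that the paper's argument is specifically designed to circumvent.

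\textbf{The base case does not follow from Theorem~\ref{th:cor0} by the thickening trick you describe.} For the general weights $w_1,\ldots,w_{m+n}$, the subgroup $U$ is \emph{not} the full unstable horospherical subgroup of $a_t$; it is a proper subgroup of it. If you thicken only in the stable and neutral directions, the thickened set $VU$ has positive codimension and mixing says nothing about it. If you thicken in all directions transverse to $U$, you must include the remaining unstable directions, which are expanded by $a_t$, so after applying $a_t$ the thickening is no longer a small tube and the comparison with $\phi_1$ fails. The paper resolves this by never applying equidistribution for $a_t$ directly: Lemma~\ref{l:s} produces an intermediate parameter $\overline{s}$ so that $a(\overline{s}_{t_r}-\overline{s})=b_z$ is the \emph{equal-weight} flow, for which $U$ \emph{is} the full unstable leaf and Theorem~\ref{p:km} (Kleinbock--Margulis) applies. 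All of the geometric input about $a_t$ is absorbed into the algebraic choice of $\overline{s}$.

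\textbf{The norm blow-up you flag as the ``main obstacle'' is fatal to your inductive scheme as written.} In your $j^*=1$ branch you form $\Phi=\prod_i \phi_i\circ a_{t_i-t_1}$ and apply the $r=1$ estimate, incurring an error $e^{-\delta t_1}\cN_k(\Phi)$. But $\cN_k(\Phi)$ grows like $\exp\big(C\sum_i(t_i-t_1)\big)$, and the $t_i-t_1$ are entirely unconstrained by $D=t_1$; no choice of $\delta$ independent of the tuple can absorb this. The same problem recurs in your $j^*\ge 2$ branch: after splitting, each cluster still contains times with arbitrarily large internal spread relative to $D$, so the Sobolev norms of the clustered products again explode. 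The paper avoids this by peeling off exactly one factor, $\phi_r$, per inductive step: the remaining $r-1$ factors are never combined into a single test function in a Sobolev norm. Instead, condition (ii) of Lemma~\ref{l:s} guarantees $\lfloor\overline{s}-\overline{s}_{t_i}\rfloor\ge c_2 D$ for $i<r$, so in the inner $v$-integral each $\phi_i(a(\overline{s}_{t_i}-\overline{s})vx_{\overline{s},u})$ varies only through a \emph{contracting} conjugation and may be frozen with a pure Lipschitz error $O(e^{-c_2 D}\|\phi_i\|_{\mathrm{Lip}})$; see \eqref{eq:pprod}. Only $\phi_r$ ever sees a $C^k$ or $L^2_k$ norm, and that one norm is not inflated. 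This is why the paper's error term is $e^{-\delta D}\|f\|_{C^k}\prod_i\cN_k(\phi_i)$ with no extra growth, and why the non-divergence input (Theorem~\ref{p:non_div}) is needed to control the base point $x_{\overline{s},u}$ before invoking Theorem~\ref{p:km}.
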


\begin{remark}
The case $r=1$ was proved by Kleinbock and Margulis in \cite{km3}, and our arguments are inspired by theirs. We stress
that the constant $\delta$ in Theorem \ref{th:corr} is \emph{independent} of $r$.
\end{remark}

We also record the following corollary of Theorem \ref{th:corr}.

\begin{corollary}\label{cor:corr0}
There exists $\delta' >0$ such that for every 
$\phi_0\in C^\infty(\cY)$,
$\phi_1,\ldots,\phi_r\in C_c^\infty(\mathcal{X})$,
$x_0\in \mathcal{X}$, and $t_1,\ldots,t_r>0$, we have 
\begin{align*}
\int_{\mathcal{Y}} \phi_0(y) \left( \prod_{i=1}^r \phi_i(a_{t_i}y) \right )d\mu_\cY(y)=&  \left(\int_{\mathcal{Y}}\phi_0 \, d\mu_\cY \right)\prod_{i=1}^r \left(\int_{\mathcal{X}}\phi_i \, d\mu_\cX\right)\\
& +O_{r}\left( e^{-\delta' D(t_1,\ldots,t_r)}\, \|\phi_0\|_{C^k}\prod_{i=1}^r \cN_k(\phi_i)\right).
\end{align*}
\end{corollary}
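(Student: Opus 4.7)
The plan is to deduce Corollary \ref{cor:corr0} directly from Theorem \ref{th:corr} by parametrizing $\cY$ as a torus quotient of $U$ and inserting a fixed $C_c^\infty(U)$ cutoff that periodizes to the constant function $1$. Set $x_0 := \bZ^{m+n} \in \cX$. Under the identification $U \cong \hbox{M}_{m,n}(\bR)$, the stabilizer of $x_0$ in $U$ equals $\hbox{M}_{m,n}(\bZ)$, so the orbit map $\pi \colon u \mapsto u x_0$ factors through a diffeomorphism $U/\hbox{M}_{m,n}(\bZ) \xrightarrow{\sim} \cY$. In particular, both $u \mapsto \phi_0(u x_0)$ and $u \mapsto \prod_{i=1}^r \phi_i(a_{t_i} u x_0)$ are $\hbox{M}_{m,n}(\bZ)$-periodic functions on $U$.

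Choose, once and for all, a nonnegative $\psi \in C_c^\infty(U)$ with compact support $\Omega$ satisfying the periodization identity
\[
\sum_{v \in \hbox{M}_{m,n}(\bZ)} \psi(u+v) = 1 \qquad \text{for all } u \in U,
\]
produced by a standard smooth partition of unity on the torus. For $\phi_0 \in C^\infty(\cY)$ define $f(u) := \psi(u)\, \phi_0(u x_0)$, so $f \in C_c^\infty(U)$ with $\supp(f) \subset \Omega$. Integrating the $\hbox{M}_{m,n}(\bZ)$-periodic function $\phi_0(u x_0) \prod_i \phi_i(a_{t_i} u x_0)$ against $\psi$ and unfolding onto the fundamental domain $[0,1]^{mn}$ then yields
\[
\int_U f(u) \prod_{i=1}^r \phi_i(a_{t_i} u x_0)\, du = \int_\cY \phi_0(y) \prod_{i=1}^r \phi_i(a_{t_i} y)\, d\mu_\cY(y),
\]
and likewise $\int_U f(u)\, du = \int_\cY \phi_0\, d\mu_\cY$. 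Theorem \ref{th:corr} applied with this $f$, at the point $x_0$ and with support in $\Omega$, now produces the required main term together with an error of size
\[
e^{-\delta D(t_1,\ldots,t_r)}\, \|f\|_{C^k} \prod_{i=1}^r \cN_k(\phi_i),
\]
and we take $\delta' := \delta$.

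The one point requiring care is the bound $\|f\|_{C^k} \le C\, \|\phi_0\|_{C^k}$ for some constant $C$ depending only on $\psi$, hence only on the dimension and $k$. By the Leibniz rule each derivative of $f = \psi \cdot (\phi_0 \circ \pi)$ of order at most $k$ decomposes as a finite sum of products $(\partial^\alpha \psi)(\partial^\beta(\phi_0 \circ \pi))$ with $|\alpha| + |\beta| \le k$. Since $\pi$ is the restriction to $U$ of the orbit map $G \to \cX$, derivatives along $U$ correspond to left-invariant derivatives along $\hbox{Lie}(U) \subset \hbox{Lie}(G)$ on $\cY$, and these are precisely the derivatives controlled by $\|\phi_0\|_{C^k}$ on the torus $\cY$. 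Since $\psi$, $\Omega$ and $x_0$ are fixed once and for all, the constants $O_{x_0,\Omega,r}$ from Theorem \ref{th:corr} collapse to $O_r$, completing the reduction. The whole technical content lies inside Theorem \ref{th:corr}, so the only substantive concern here is the correctness of the reduction and the norm comparison just discussed.
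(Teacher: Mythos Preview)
Your proof is correct and takes a genuinely different, cleaner route than the paper. The paper lifts $\phi_0$ to $U$, multiplies by the characteristic function $\chi$ of the cube $U_0 = \hbox{M}_{m,n}([0,1])$, and then smooths $\chi$ to $\chi_\eps$ at the cost of $\|\chi_\eps\|_{C^k}\ll\eps^{-k}$; applying Theorem \ref{th:corr} to $f_\eps=\tilde\phi_0\chi_\eps$ produces an error of size $\eps + \eps^{-k}e^{-\delta D}$, which after optimizing $\eps$ yields $\delta'=\delta/(k+1)$. You instead fix once and for all a smooth $\psi\in C_c^\infty(U)$ that periodizes to $1$ and set $f=\psi\cdot(\phi_0\circ\pi)$; the unfolding identity is exact, so no approximation step is needed, and since $\psi$ has fixed $C^k$-norm the Leibniz rule gives $\|f\|_{C^k}\ll\|\phi_0\|_{C^k}$ directly. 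This buys you the full exponent $\delta'=\delta$ rather than $\delta/(k+1)$, and the argument is shorter. The only point to make fully explicit is the identification of the $C^k$-norm on the torus $\cY$ with sup-norms of the $U$-directional derivatives of $\phi_0\circ\pi$, which you address.
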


\begin{proof}[Proof of Corollary \ref{cor:corr0} (assuming Theorem \ref{th:corr})]
	Let $x_0$ denote the identity coset in $\cX \cong G/\Gamma$, which corresponds to the standard lattice $\bZ^{m+n}$, and 
	recall that 
	$$
	\cY=Ux_0\simeq U/(U\cap \Gamma).
	$$
	Let $\tilde{\phi}_0\in C^\infty(U)$ denote the lift of the function $\phi_0$ to $U$, and $\chi$ the characteristic function 
	of the subset
	$$
	U_0:=\left\{
	\left(
	\begin{tabular}{ll}
	$I_m$ & $u$\\
	$0$ & $I_n$
	\end{tabular}
	\right):\, u\in \hbox{M}_{m,n}([0,1])
	\right\}.
	$$
	Given $\eps > 0$, let $\chi_\eps\in C_c^\infty(U)$ be a smooth approximation of $\chi$
	with uniformly bounded support which satisfy
	$$
	\chi\le \chi_\eps\le 1,\quad \|\chi-\chi_\eps\|_{L^1(U)}\ll \eps,\quad \|\chi_\eps\|_{C^k}\ll \eps^{-k}.
	$$
	We observe that if $f_\eps:=\tilde{\phi}_0\chi_\eps$ and $f_0:=\tilde{\phi}_0\chi$, then
\[
	\|f_0-f_\eps\|_{L^1(U)}\ll \eps\, \|\phi_0\|_{C^0}
	\]
	and
	\[
	\|f_\eps \|_{C^k}\ll \|\tilde{\phi}_0\|_{C^k}\|\chi_\eps\|_{C^k}\ll \eps^{-k} \|{\phi}_0\|_{C^k},
\]
	which implies that 
	\begin{align*}
	\int_{\mathcal{Y}} \phi_0(y) \left( \prod_{i=1}^r \phi_i(a_{t_i}y) \right )d\mu_\cY(y) &=
	\int_{U} f_0(u) \left( \prod_{i=1}^r \phi_i(a_{t_i}ux_0) \right )\, du\\
	&=
	\int_{U} f_\eps(u) \left( \prod_{i=1}^r \phi_i(a_{t_i}u x_0) \right )du
	+O\left(\eps \prod_{i=0}^r \|\phi_i\|_{C^0}\right),
	\end{align*}
	and 
	\begin{align*}
	\int_{\cY} \phi_0\, d\mu_\cY=\int_U f_0(u)\, du=
	\int_{U} f_\eps(u)\, du +O\left(\eps\|\phi_0\|_{C^0}\right).
	\end{align*}
	Therefore, Theorem \ref{th:corr} implies that
	\begin{align*}
	\int_{\mathcal{Y}} \phi_0(y) \left( \prod_{i=1}^r \phi_i(a_{t_i}y) \right )d\mu_\cY(y)=&  \left(\int_{U}f_\eps(u) \, du \right)\prod_{i=1}^r \left(\int_{\mathcal{X}}\phi_i \, d\mu_\cX\right)\\
	& +O_{r}\left( \eps \prod_{i=0}^r \|\phi_i\|_{C^0}+ e^{-\delta D(t_1,\ldots,t_r)}\, \|f_\eps\|_{C^k}\prod_{i=1}^r \cN_k(\phi_i)\right)\\
	=&  \left(\int_{\mathcal{Y}}\phi_0 \, d\mu_\cY \right)\prod_{i=1}^r \left(\int_{\mathcal{X}}\phi_i \, d\mu_\cX\right)\\
	& +O_{r}\left( \left(\eps+ \eps^{-k} e^{-\delta D(t_1,\ldots,t_r)}\right)\, \|\phi_0\|_{C^k}\prod_{i=1}^r \cN_k(\phi_i)\right).
	\end{align*}
	The corollary (with $\delta' = \delta/(k+1)$) follows by choosing $\eps=e^{-\delta D(t_1,\ldots,t_r)/(k+1)}$.
\end{proof}

\subsection{Preliminary results}
We recall that $d$ is a distance on $\cX \cong G/\Gamma$ induced from a right-invariant Riemannian metric on $G$. 
We denote by $B_G(\rho)$ the ball of radius $\rho$ centered at the identity in $G$. For a point $x\in\cX$, we let $\iota(x)$ denote
the \emph{injectivity radius} at $x$, that is to say, the supremum over $\rho>0$ such that the map 
$B_G(\rho)\to B_G(\rho)x:g\mapsto gx$ is injective. \\

Given $\eps > 0$, let 
\begin{equation}
\label{eq:Keps}
\cK_\eps = \big\{ \Lambda \in \cX \, : \, \|v\| \geq \eps, \enskip \textrm{for all $v \in \Lambda \setminus \{0\}$} \big\}.
\end{equation}
By Mahler's Compactness Criterion, $\cK_\eps$ is a compact subset of $\cX$. Furthermore, using Reduction Theory,
one can show:

\begin{proposition}[\cite{km3}, Prop.~3.5] \label{p:iota}
 $\iota(x)\gg \eps^{m+n}$ for any $x\in \cK_\eps$.
\end{proposition}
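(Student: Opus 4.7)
The plan is to reduce the claim to a quantitative triviality statement for the stabiliser of $\Lambda$. If $x$ corresponds to $\Lambda \in \cK_\eps$, then the orbit map $g \mapsto gx$ fails to be injective on $B_G(\rho)$ precisely when there is a nontrivial $h$ with $h\Lambda = \Lambda$ and $d_G(h, e) \leq 2\rho$ (using the triangle inequality to compare $d_G(g_1, g_2)$ with $d_G(e, g_1^{-1}g_2)$ for nearby points). It therefore suffices to produce an absolute constant $c > 0$ such that whenever $\Lambda \in \cK_\eps$ and $h \in G$ satisfies $h\Lambda = \Lambda$ together with $d_G(h, e) \leq c\eps^{m+n}$, one necessarily has $h = e$.

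The key geometric input I will use is the existence of a uniformly short $\bZ$-basis of $\Lambda$. Since $\lambda_1(\Lambda) \geq \eps$ for $\Lambda \in \cK_\eps$ and $\det \Lambda = 1$, Minkowski's Second Theorem yields $\prod_{i=1}^{m+n} \lambda_i(\Lambda) \asymp 1$, and hence $\lambda_{m+n}(\Lambda) \ll_{m+n} \eps^{-(m+n-1)}$. Classical reduction theory (going back to Mahler) then produces a $\bZ$-basis $v_1, \ldots, v_{m+n}$ of $\Lambda$ whose lengths are all bounded by a dimension-dependent constant times $\lambda_{m+n}(\Lambda)$, so that $\|v_i\| \ll_{m+n} \eps^{-(m+n-1)}$ uniformly in $i$.

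The final step is a no-small-movement argument. The right-invariant Riemannian metric on $G$ is comparable, near the identity, to the operator norm pulled back via the exponential map, so $d_G(h, e) \leq \rho$ implies $\|h - I\|_{\mathrm{op}} \ll \rho$. If such an $h$ stabilises $\Lambda$, then for each basis vector $v_i$,
\[
\|h v_i - v_i\| \leq \|h - I\|_{\mathrm{op}}\, \|v_i\| \ll \rho \cdot \eps^{-(m+n-1)}.
\]
Choosing $\rho = c\eps^{m+n}$ with $c$ sufficiently small forces the right-hand side to be strictly below $\eps$. Since $h v_i - v_i$ lies in $\Lambda$ and $\Lambda$ admits no nonzero vector of norm less than $\eps$, we must have $h v_i = v_i$ for every $i$. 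As $\{v_i\}$ spans $\bR^{m+n}$, this forces $h = e$, and the required injectivity radius bound follows.

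The main (mild) obstacle is the short-basis input: Minkowski's Second Theorem only governs the successive minima, and for $m + n \geq 5$ these need not all be realised by a single $\bZ$-basis. The statement that every unimodular lattice nevertheless admits a $\bZ$-basis with $\|v_i\| \ll_{m+n} \lambda_{m+n}(\Lambda)$ is classical but not entirely trivial; once it is granted, the rest of the argument is an elementary pigeonhole exploitation of the defining property of $\cK_\eps$.
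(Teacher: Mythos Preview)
Your argument is correct. The paper does not actually prove this proposition---it is stated with a citation to \cite{km3}, Prop.~3.5, and used as a black box---so there is no in-paper proof to compare against. The route you take (bound $\lambda_{m+n}(\Lambda)$ via Minkowski's Second Theorem, pass to a short $\bZ$-basis with $\|v_i\|\ll \eps^{-(m+n-1)}$, and then argue that any nontrivial stabiliser element close to the identity would move some $v_i$ by a nonzero lattice vector of norm $<\eps$) is exactly the standard reduction-theory proof underlying the cited result. The only mild caveats are cosmetic: the constant ``$2\rho$'' in your first reduction should really be ``$C\rho$'' for some absolute $C$ (inversion is only locally Lipschitz for a right-invariant metric, not an isometry), and the short-basis lemma you flag as the main obstacle is indeed classical (e.g.\ one has a basis with $\|b_i\|\le i\,\lambda_i(\Lambda)$). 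Neither affects the conclusion.
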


An important role in our argument will be played by the one-parameter semi-group
\begin{equation}
\label{eq:bt}
b_t:=\hbox{diag}\left(e^{t/m},\ldots,e^{t/m},e^{-t/n},\ldots, e^{-t/n}\right),\quad t>0,
\end{equation}
which coincides with the semi-group $(a_t)$ as defined in \eqref{eq:at} with the special choice of exponents 
\[
w_1 = \ldots = w_m = \frac{1}{m} \qand w_{m+1} = \ldots = w_{m+n} = \frac{1}{n}.
\]
The submanifold $\cY \subset \cX$ is an unstable manifold for the flow $(b_t)$ which makes the analysis of the
asymptotic behavior of $b_t\cY$ significantly easier than that of $a_t \cY$ for general parameters. Using Theorem
\ref{th:cor0}, Kleinbock and Margulis proved in \cite{km0} a quantitative equidistribution result for the family $b_t\cY$ 
as $t\to \infty$, we shall use a version of this result from their later work \cite{km3}.

\begin{theorem}[\cite{km3}; Th.~2.3]\label{p:km}
There exist $\rho_0>0$ and $c,\gamma>0$  such that for every $\rho\in (0,\rho_0 )$, $f\in C_c^\infty(U)$
satisfying $\hbox{\rm supp}(f)\subset B_G(\rho)$, $x\in \mathcal{X}$ with $\iota(x)>2\rho$,
$\phi\in C_c^\infty(\cX)$, and $t\ge 0$,
\begin{align*}
\int_{U} f(u) \phi(b_tux)\, du=&\left(\int_{U} f(u)\, du \right)
\left(\int_{\cX} \phi\, d\mu_\cX \right)\\
&+O\left( \rho\|f\|_{L^1(U)} \|\phi\|_{Lip} + \rho^{-c} e^{-\gamma t}\, \|f\|_{C^k}\|\phi\|_{L^2_k(\cX)}\right).
\end{align*}
\end{theorem}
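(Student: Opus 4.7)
I propose to use the classical ``thickening trick,'' going back to Margulis's thesis and developed in the form needed here in \cite{km0,km3}, to trade the orbit integral $\int_U f(u)\phi(b_t u x)\,du$ for a genuine $G$-mixing integral on $\cX$ to which Theorem \ref{th:cor0} applies. Let $P<G$ be the opposite parabolic to $U$ relative to $(b_t)$, so that $(u,p)\mapsto up$ is a local diffeomorphism $U\times P\to G$ near the identity and $b_t$ acts non-expandingly on $P$ by conjugation (contracting the stable part and preserving the neutral part).

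I would start by \emph{thickening} $f$ to $G$. Pick a bump $\eta\in C_c^\infty(P)$ with $\int_P\eta\,dp=1$, $\supp(\eta)\subset B_P(\rho)$, and $\|\eta\|_{C^k}\ll \rho^{-c_0}$ for some $c_0=c_0(k,\dim P)$, and define $\tilde f\in C_c^\infty(G)$ by $\tilde f(up):=f(u)\eta(p)$ near the identity and $0$ elsewhere. Then $\tilde f$ is supported in $B_G(O(\rho))$, satisfies $\int_G\tilde f\,dg=\int_U f\,du$ and $\|\tilde f\|_{C^k}\ll \rho^{-c_0}\|f\|_{C^k}$. Since $b_t p b_{-t}\in B_P(O(\rho))$ for $p\in B_P(\rho)$, the Lipschitz hypothesis on $\phi$ gives
\[
\int_G \tilde f(g)\,\phi(b_t g x)\,dg \;=\; \int_U f(u)\,\phi(b_t u x)\,du \;+\; O\!\left(\rho\,\|f\|_{L^1(U)}\,\|\phi\|_{Lip}\right).
\]
The hypothesis $\iota(x)>2\rho$ makes $g\mapsto gx$ injective on $\supp(\tilde f)$, so setting $\Phi(gx):=\tilde f(g)$ defines $\Phi\in C_c^\infty(\cX)$ with $\int_\cX\Phi\,d\mu_\cX=\int_U f\,du$, $\|\Phi\|_{L^2_k(\cX)}\ll \rho^{-c_1}\|f\|_{C^k}$, and $\int_G \tilde f(g)\phi(b_t g x)\,dg = \int_\cX \Phi(y)\phi(b_t y)\,d\mu_\cX(y)$. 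Applying Theorem \ref{th:cor0} to this last integral (with $\|b_t\|\gg e^{\gamma_0 t}$) and combining with the previous comparison yields the claimed bound with suitable $c,\gamma>0$.

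The main obstacle is pinning down the exponent $c$ of $\rho^{-c}$ in the Sobolev transfer from $\|\tilde f\|_{C^k}$ to $\|\Phi\|_{L^2_k(\cX)}$: one needs coordinates near $x$ that respect the Riemannian structure of $\cX$ so that no hidden $\rho$-loss is incurred by the shape of the injectivity-radius neighborhood at $x$ --- this is exactly the role played by Proposition \ref{p:iota}, which quantifies $\iota(x)$ on Mahler compacta. Once $c$ is controlled, the two error terms $\rho\,\|f\|_{L^1(U)}\,\|\phi\|_{Lip}$ and $\rho^{-c}e^{-\gamma t}\|f\|_{C^k}\|\phi\|_{L^2_k(\cX)}$ coincide with those stated, and the parameter $\rho$ is simply left free (or optimized for a given application).
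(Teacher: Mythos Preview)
The paper does not prove this theorem; it is quoted from \cite{km3}, with only the remark that the $\phi$-dependence is implicit in the original. Your outline is essentially the Kleinbock--Margulis thickening argument itself: smear along the weak-stable parabolic $P$, use a Lipschitz bound to collapse the $P$-integration, push the thickened function to $\cX$ via the injectivity hypothesis, and apply Theorem~\ref{th:cor0}.

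There is one concrete slip. With your factorisation $\tilde f(up)=f(u)\eta(p)$, the Lipschitz step requires $d\big(b_tupx,\,b_tux\big)=O(\rho)$ uniformly in $t$. By right-invariance of the metric this distance is bounded by $d_G\!\big(b_t(upu^{-1})b_{-t},e\big)$, and $upu^{-1}$ is \emph{not} in $P$: a short block-matrix computation shows it carries a $U$-component of size $O(\rho^2)$, which $b_t(\,\cdot\,)b_{-t}$ expands, so the bound fails for large $t$. The fix is to thicken on the left, setting $\tilde f(pu):=\eta(p)f(u)$; then $b_tpu=(b_tpb_{-t})\,b_tu$, and right-invariance gives $d(b_tpux,b_tux)\le d_G(b_tpb_{-t},e)\ll\rho$ directly. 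With this correction the rest of your plan goes through, and the exponent $c$ in $\rho^{-c}$ comes just from $\|\eta\|_{C^k}\ll\rho^{-c_0}$ together with the harmless coordinate change on the injectivity ball. (Proposition~\ref{p:iota} is not needed here: $\iota(x)>2\rho$ is a hypothesis of the statement, not something to be verified.)
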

\begin{remark}
Although the dependence on $\phi$ is not stated in \cite[Theorem~2.3]{km3},
the estimate is explicit in the proof.
\end{remark}

We will prove Theorem \ref{th:corr} through successive uses of Theorem \ref{p:km}. In order to 
make things more transparent, it will be convenient to embed the flow $(a_t)$ as defined in \eqref{eq:at} 
in a multi-parameter flow as follows.  For $\overline{s}=(s_1,\ldots,s_{m+n})\in \mathbb{R}^{m+n}$,
we set
\begin{equation}
\label{eq:as}
a(\overline{s}):=\hbox{diag}\left(e^{s_1},\ldots,e^{s_m},e^{-s_{m+1}},\ldots, e^{-s_{m+n}}\right).
\end{equation}
We denote by $S^+$ the cone in $\mathbb{R}^{m+n}$ consisting of those $\overline{s}=(s_1,\ldots,s_{m+n})$ which satisfy
$$
s_1,\ldots,s_{m+n}>0\quad\hbox{and}\quad 
\sum_{i=1}^{m}s_i=\sum_{i=m+1}^{m+n}s_i.
$$
For $\overline{s}=(s_1,\ldots,s_{m+n})\in S^+$, we set 
$$
\lfloor\overline{s}\rfloor :=\min(s_1,\ldots,s_{m+n}),
$$
and thus, with $\overline{s}_t:=(w_1t,\ldots,w_{m+n}t)$, we see that $a_t=a(\overline{s}_t)$.\\

In addition to Theorem \ref{p:km}, we shall also need the following quantitative non-divergence
estimate for unipotent flows established by Kleinbock and Margulis in \cite{km3}.

\begin{theorem}[\cite{km3}; Cor.~3.4]\label{p:non_div}
There exists $\theta=\theta(m,n)>0$ such that
for every compact $L\subset \cX$ and a Euclidean ball $B\subset U$ centered at the identity,
there exists $T_0>0$ such that for every $\eps\in (0,1)$, $x\in L$,
and $\overline{s}\in S^+$ satisfying $\lfloor \overline{s}\rfloor\ge T_0$,
one has
$$
\left|\{u\in B: a(\overline{s})ux\notin \cK_\eps \}\right|\ll \eps^\theta\, |B|.
$$
\end{theorem}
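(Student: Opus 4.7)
\emph{Plan.} The strategy is to apply the general Kleinbock--Margulis non-divergence criterion for $(C,\alpha)$-good functions (the effective form of the Dani--Margulis non-divergence theorem) to the one-parameter family of polynomial maps $u\mapsto a(\overline{s})u$. That criterion says: if, for every primitive integer sublattice $\Delta\subset\bZ^{m+n}$, the covolume function $u\mapsto\|h(u)\cdot v_\Delta\|$ is $(C,\alpha)$-good on a suitable fattening of $B$ and its supremum on $B$ is at least some $\rho>0$, then $|\{u\in B:h(u)\bZ^{m+n}\notin \cK_\eps\}|\ll_{C,\alpha}(\eps/\rho)^\alpha|B|$. So I would reduce the theorem to verifying these two hypotheses for $h(u):=a(\overline{s})u g_0$, where $x=g_0\bZ^{m+n}$ with $g_0$ ranging in a bounded preimage of $L$.

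\emph{Goodness of the covolume functions.} The coordinate entries of $u\mapsto a(\overline{s})u g_0$ are polynomials in the entries of $u\in\hbox{M}_{m,n}(\bR)$ of degree bounded by a constant $d=d(m,n)$ (indeed $U$ is abelian unipotent, and acts on the $k$-th exterior power by polynomials of bounded degree). Consequently, for every $v\in\bigwedge^k\bR^{m+n}$, the function $u\mapsto\|h(u)v\|$ is the norm of a polynomial map of degree at most $d$, hence $(C,1/d)$-good on any Euclidean ball, with $C$ depending only on $m$ and $n$. This will supply hypothesis~(i) with $\alpha=1/d$.

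\emph{Lower bound on the sup.} For each primitive $\Delta\subset\bZ^{m+n}$ of rank $k$ with decomposable wedge $v_\Delta\in\bigwedge^k\bZ^{m+n}$, I decompose $g_0 v_\Delta$ according to the weight spaces of $a(\overline{s})$ acting on $\bigwedge^k\bR^{m+n}$. The weights are integer linear combinations of $s_1,\dots,s_m,-s_{m+1},\dots,-s_{m+n}$; since $\overline{s}\in S^+$ and $\lfloor\overline{s}\rfloor\ge T_0$, each nonzero weight is, in absolute value, at least a fixed positive multiple of $\lfloor\overline{s}\rfloor$. Using the polynomial divergence of the $U$-action (for $v$ not of top weight, $u\mapsto uv$ has nontrivial components in strictly higher weight spaces) one produces $u_0\in B$ with
\[
\|h(u_0)v_\Delta\|\;\gg_{B}\;e^{\gamma\lfloor\overline{s}\rfloor}\,\|g_0 v_\Delta\|,
\]
unless $g_0 v_\Delta$ lies entirely in the top weight space of $\bigwedge^k\bR^{m+n}$. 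The second case is already expanding: $\|h(u)v_\Delta\|=e^{\lambda_{\max}(\overline{s})}\|g_0 v_\Delta\|$ for all $u\in B$. Finally, $\|g_0 v_\Delta\|\ge c_L>0$ by Minkowski applied to integral wedges together with compactness of $L$ (this uses that $g_0$ ranges in a bounded set). Choosing $T_0$ large enough forces $\sup_{u\in B}\|h(u)v_\Delta\|\ge\rho$ for a uniform $\rho=\rho(L,B)>0$, giving hypothesis~(ii). The KM criterion then yields the claimed bound with $\theta=1/d(m,n)$.

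\emph{Main obstacle.} The delicate point is making the lower bound on $\sup_B\|h(u)v_\Delta\|$ \emph{uniform} simultaneously over the infinite collection of primitive sublattices $\Delta$ and over the multiparameter region $\overline{s}\in S^+$ with $\lfloor\overline{s}\rfloor\ge T_0$. One must exploit the constraint $\sum_{i\le m}s_i=\sum_{i>m}s_i$ defining $S^+$, together with $\lfloor\overline{s}\rfloor\ge T_0$, to guarantee a genuine spectral gap between the top weight and the remaining weights that is linear in $\lfloor\overline{s}\rfloor$, rather than merely in $\sum_i s_i$; only then does the weight-space dichotomy produce the uniform expansion needed across all $\Delta$ and all admissible $\overline{s}$.
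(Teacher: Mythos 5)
The paper does not actually prove this statement: it is imported from Kleinbock--Margulis \cite{km3} (their Corollary 3.4), so there is no internal proof to compare against. Your sketch reconstructs the standard argument behind that corollary --- reduce to the quantitative non-divergence theorem for $(C,\alpha)$-good functions, get goodness of $u\mapsto\|a(\overline{s})ug_0v_\Delta\|$ from the bounded polynomial degree of the $U$-action on exterior powers, and bound $\sup_{u\in B}$ from below via the weight decomposition --- and this is essentially the right route. One imprecision worth flagging: the dichotomy you state (``either produce $u_0$ with exponential growth, or $g_0v_\Delta$ lies entirely in the top weight space'') is not how the uniform lower bound is cleanly organized. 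The usual formulation is that the subspace $V_+\subset\bigwedge^k\bR^{m+n}$ spanned by the wedges $e_{I\cup J}$ with either $J=\emptyset$ or $I=\{1,\dots,m\}$ consists entirely of vectors whose $a(\overline{s})$-weight is at least $\lfloor\overline{s}\rfloor$ for \emph{every} $\overline{s}\in S^+$ (using $\sum_{i\le m}s_i=\sum_{i>m}s_i$; the case $k=m+n$ is trivial since then $d_\Lambda=1$), and that for every nonzero $v$ the polynomial map $u\mapsto\pi_{V_+}(uv)$ is not identically zero, because $U$ only moves indices from the lower block into the upper block, so the maximal reachable weight component of $uv$ always lands in $V_+$. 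Linearity in $v$ and compactness of the unit sphere then give $\sup_{u\in B}\|\pi_{V_+}(uv)\|\ge c_B\|v\|$ uniformly over all $v$ at once, which together with $\|g_0v_\Delta\|\gg_L\|v_\Delta\|\ge 1$ produces the uniform $\rho$ after choosing $T_0$; this disposes of the uniformity issue you raise in your last paragraph (the relevant gap is between the reachable weights and zero, not between the top weight and the remaining ones). With that correction your outline matches the proof in \cite{km3}.
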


\subsection{Proof of Theorem \ref{th:corr}}

Let us fix $r \geq 1$ and a $r$-tuple $(t_1,\ldots,t_r)$. Upon re-labeling, we may assume that
$t_1\le\ldots\le t_{r}$, so that 
\begin{equation}
\label{eq:D}
D := D(t_1,\ldots,t_r)=\min\{t_1,t_2-t_1,\ldots, t_r-t_{r-1}\}.
\end{equation}
The next lemma provides an additional parameter $\overline{s} \in S^{+}$,
which depends on the $r$-tuple $(t_1,\ldots,t_r)$. This parameter will be used 
throughout the proof of Theorem \ref{th:corr}, and we stress that the accompanying 
constants $c_1,c_2$ and $c_3$ are independent of $r$ and the $r$-tuple $(t_1,\ldots,t_r)$.

\begin{lemma}\label{l:s}
	There exist $c_1,c_2,c_3>0$ such that given any $t_r>t_{r-1}>0$,
	there exists $\overline{s}\in S^+$ satisfying:
	\begin{enumerate}
		\item[(i)] $\lfloor  \overline{s} \rfloor \ge c_1 (t_r -t_{r-1})$,
		\item[(ii)] $\lfloor  \overline{s}-\overline{s}_{{t}_{r-1}} \rfloor \ge c_2 (t_r-t_{r-1})$,
		\item[(iii)] $\overline{s}_{t_r}-\overline{s}=(\frac{z}{m},\ldots,\frac{z}{m},\frac{z}{n},\ldots,\frac{z}{n})$ for some $z \ge c_3 \min(t_{r-1},t_r-t_{r-1})$.
	\end{enumerate}
\end{lemma}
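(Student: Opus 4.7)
The plan is to exploit the rigidity of condition~(iii), which is the most restrictive of the three: it forces $\overline{s}$ to have the form $\overline{s}_{t_r}-z\overline{e}$, where I write $\overline{e}:=(1/m,\ldots,1/m,1/n,\ldots,1/n)\in\bR^{m+n}$ (with $m$ entries $1/m$ followed by $n$ entries $1/n$). This reduces the lemma from finding an $(m+n)$-dimensional vector to choosing a single scalar $z>0$, so I would set $\overline{s}(z):=\overline{s}_{t_r}-z\overline{e}$ from the outset.

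First I would check that $\overline{s}(z)\in S^+$ whenever $z$ is not too large. The balance condition $\sum_{i\le m}s_i=\sum_{i>m}s_i$ is automatic: both halves of $\overline{e}$ sum to $1$, and $\sum_{i\le m}w_i=\sum_{i>m}w_i$ by hypothesis, so subtracting a multiple of $\overline{e}$ from $\overline{s}_{t_r}$ (which already lies in $S^+$) keeps one on the same hyperplane. Positivity of all coordinates is ensured as soon as $z< W\,t_r$ for the constant $W:=\min(m,n)\cdot\min_i w_i>0$, which depends only on the fixed data $m,n,(w_i)$.

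Setting $h:=t_r-t_{r-1}$ and $w_{\min}:=\min_i w_i$, a direct coordinate-wise computation gives the uniform lower bounds
\[
\lfloor\overline{s}(z)\rfloor\ \ge\ w_{\min}\,t_r-\tfrac{z}{\min(m,n)}\qquad\text{and}\qquad \lfloor\overline{s}(z)-\overline{s}_{t_{r-1}}\rfloor\ \ge\ w_{\min}\,h-\tfrac{z}{\min(m,n)}.
\]
I would then choose $z:=c_0\min(t_{r-1},h)$ with $c_0:=\tfrac12 w_{\min}\min(m,n)$. Since $z\le c_0 h\le c_0 t_r$, positivity is automatic, and both displayed quantities are bounded below by $\tfrac{w_{\min}}{2}h=\tfrac{w_{\min}}{2}(t_r-t_{r-1})$ (using $t_r\ge h$ in the first bound). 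This gives (i) and (ii) with $c_1=c_2=w_{\min}/2$, while (iii) holds by construction with $c_3=c_0$.

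The constants $c_1,c_2,c_3$ obtained in this way depend only on $m,n$ and the fixed weights $(w_i)$, hence are independent of $r$ and of the tuple $(t_1,\ldots,t_r)$, as required by the remark accompanying the statement. I do not expect a real obstacle here: the only conceptually non-trivial point is recognising that the ``balanced'' vector $\overline{e}$ lies parallel to the hyperplane cutting out $S^+$, which is what permits the reduction to a single scalar parameter; the rest is a routine exercise in constant-tracking.
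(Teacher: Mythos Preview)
Your proposal is correct and follows essentially the same approach as the paper: both define $\overline{s}$ via the formula in (iii), reducing to the choice of a single scalar $z$, and then pick $z$ to be a small constant times $\min(t_{r-1},t_r-t_{r-1})$ so that (i) and (ii) follow from the elementary coordinate-wise bounds. The paper's constants are organised slightly differently (it first finds the ranges of $z$ for which (i) and (ii) hold separately, then takes the minimum), but the logic and the resulting constants are the same up to relabelling.
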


\begin{proof}
	We start the proof by defining $\overline{s}$ by the formula in (iii),
	where the parameter $z$ will be chosen later, that is to say, we set
	$$
	\overline{s}=\left(w_1t_r-\frac{z}{m},\ldots,w_mt_r-\frac{z}{m},
	w_{m+1}t_r-\frac{z}{n},\ldots,w_{m+n}t_r-\frac{z}{n}\right).
	$$
	Then (i) holds provided that 
	$$
	A_1 t_r- \frac{z}{m}\ge c_1(t_r-t_{r-1})\quad \hbox{and}\quad
	A_2 t_r- \frac{z}{n}\ge c_1(t_r-t_{r-1}),
	$$
	where $A_1:=\min(w_i:1\le i\le m)$ and $A_2:=\min(w_i:m+1\le i\le m+n)$,
	so if we set  $c_1= \min(A_1,A_2)$, then (i) holds when
	\begin{equation}
	\label{eq:z1}
	z\le c_1\min(m,n) t_{r-1}.
	\end{equation}
	To arrange (ii), we observe that
	\begin{align*}
	\overline{s}-\overline{s}_{{t}_{r-1}}=
	&\Big(w_1(t_r-t_{r-1})-\frac{z}{m},\ldots,w_m(t_r-t_{r-1})-\frac{z}{m},\\
	& \;\; w_{m+1}(t_r-t_{r-1})-\frac{z}{n},\ldots,w_{m+n}(t_r-t_{r-1})-\frac{z}{n}
	\Big),
	\end{align*}
	and thus (ii) holds provided that
	$$
	A_1 (t_r-t_{r-1})- \frac{z}{m}\ge c_2(t_r-t_{r-1})\quad \hbox{and}\quad
	A_2 (t_r-t_{r-1})- \frac{z}{n}\ge c_2(t_r-t_{r-1}).
	$$
	If we let $c_2= \min(A_1,A_2)/2$, then (ii) holds when
	\begin{equation}
	\label{eq:z2}
	z\le c_2\min(m,n) (t_r-t_{r-1}).
	\end{equation}
	So far we have arranged so that (i) and (ii) hold provided that $z$ satisfies \eqref{eq:z1}
	and \eqref{eq:z2}. Let $c_3=\min(c_1,c_2)\min(m,n)$, and note that if we pick $z = c_3 \min(t_{r-1},t_{r}-t_{r-1})$,
	then (i), (ii) and (iii) are all satisfied.
\end{proof}

Let us now continue with the proof of Theorem \ref{th:corr}. With the parameter $\overline{s}$ provided by Lemma \ref{l:s} above,
we have
\begin{align}
\lfloor \overline{s} \rfloor &\ge c_1\, D, \label{eq:c1}\\
\lfloor \overline{s}- \overline{s}_{t_{i}}  \rfloor &\ge c_2\,D\quad\hbox{for all $i=1,\ldots,r-1$}, \label{eq:c2}\\
a(\overline{s}_{t_{r}} - \overline{s})&=b_z\quad \hbox{for some $z\ge c_3\, D$,}\label{eq:c3}
\end{align}
where $b_z$ is defined as in \eqref{eq:bt} and $D$ as in \eqref{eq:D} Let us throughout the rest of the proof fix a 
compact set $\Omega \subset U$. Our aim now is to estimate integrals of the form
$$
I_r:=\int_{U} f(u)\left(\prod_{i=1}^r \phi_i(a_{t_i}ux_0)\right)\, du,
$$
where $f$ ranges over $C^\infty_c(U)$ with $\supp(f) \subset \Omega$. Our proof will proceed by induction over $r$,
the case $r=0$ being trivial. \\

Before we can start the induction, we need some notation. Let $\rho_0$ and $k$ be as in Theorem \ref{p:km}, and pick for
$0 < \rho < \rho_0$, a non-negative $\omega_\rho \in C^\infty_c(\cX)$ such that
\begin{equation}
\label{eq:omegarho}
\hbox{supp}(\omega_\rho)\subset B_{G}(\rho),\quad \|\omega_\rho\|_{C^k}\ll \rho^{-\sigma},\quad \int_{U} \omega_\rho(v)\, dv=1,
\end{equation}
for some fixed $\sigma=\sigma(m,n,k)>0$. The integral $I_r$ can now be rewritten as follows: 
\begin{align*}
I_r=&I_r\cdot \left(\int_{U} \omega_\rho(v)\,dv\right) = \int_{U\times U} f(u)\omega_\rho(v)\left(\prod_{i=1}^r\phi_i(a_{t_i}ux_0)\right)\,dudv \\
=&
\int_{U\times U} f(a(-\overline{s})va(\overline{s})u)\omega_\rho(v)\left(\prod_{i=1}\phi_i(a_{t_i}a(-\overline{s})va(\overline{s})ux_0)\right)\,dudv.
\end{align*}
If we set
$$
f_{\overline{s},u}(v):=f(a(-\overline{s})va(\overline{s})u)\omega_\rho(v)
\quad \hbox{and} \quad
x_{\overline{s},u}:= a(\overline{s})ux_0,
$$
then 
\begin{align*}
I_r=& \int_{U} \left( \int_{U} f_{\overline{s},u}(v) \prod_{i=1}^r \phi_i(a(\overline{s}_{{t}_i}-\overline{s})v x_{\overline{s},u})\, dv  \right)\, du.
\end{align*}
We observe that if $f_{\overline{s},u}(v)\ne 0$, then 
$$
v\in \hbox{supp}(\omega_\rho)\subset B_{G}(\rho) \qand a(-\overline{s})va(\overline{s})u\in \hbox{supp}(f),
$$
so that 
$$
u\in a(-\overline{s})v^{-1}a(\overline{s})\hbox{supp}(f)\subset 
a(-\overline{s})\hbox{supp}(\omega_\rho)^{-1}a(\overline{s})\Omega.
$$
Since $\overline{s}\in S^+$, the linear map
$v\mapsto a(-\overline{s})v a(\overline{s})$,
for $v\in U \cong \bR^{mn}$, is \emph{non-expanding}, and
thus we can conclude that there exists
a fixed Euclidean ball $B$ in $U$ 
(depending only on $\Omega$ and $\rho_0$), and centered at the identity,
such that if $f_{\overline{s},u}(v)\ne 0$, then $u\in B$.
This implies that the integral $I_r$ can be written as
\begin{equation}
\label{eq:I}
I_r=\int_{B} \left( \int_{U} f_{\overline{s},u}(v)\prod_{i=1}^r\phi_i(a(\overline{s}_{{t}_i}-\overline{s})v x_{\overline{s},u})\, dv  \right)\, du,
\end{equation}
and
\begin{equation}
\label{eq:sobolev0}
\|f_{\overline{s},u}\|_{C^k}\ll  \|f\|_{C^k}\, \|\omega_\rho\|_{C^k}\ll \rho^{-\sigma}\, \|f\|_{C^k}.
\end{equation}
Furthermore,
\begin{align}
\label{eq:integral0}
\int_{U} \|f_{\overline{s},u}\|_{L^1(U)}\, du
&= \int_{U\times U} |f(a(-\overline{s})va(\overline{s})u)\omega_\rho(v)|\,dvdu\\
&= \left(\int_{U} |f(u)|\,du\right)
\left(\int_{U} \omega_\rho(v)\,dv\right)= \|f\|_{L^1(U)}.\nonumber
\end{align}
We decompose the integral $I_r$ in \eqref{eq:I} as
$$
I_r=I_r'(\eps)+I_r''(\eps),
$$
where 
$I_r'(\eps)$ is the integral over
$$
B_\eps:=\{u\in B:\, x_{\overline{s},u}\notin \cK_\eps \},
$$ 
and $I_r''(\eps)$ is the integral over $B\backslash B_\eps$. \\

To estimate $I_r'(\eps)$, we first recall that $\overline{s} \geq c_1 D$ by \eqref{eq:c1}, so if $D \geq T_0/c_1$, where $T_0$
is as in Theorem \ref{p:non_div} applied to $\cL = \cK_\eps$ and $B$, then the same theorem implies that there exists $\theta > 0$
such that
\begin{equation}
\label{eq:B}
|B_\eps|\ll \eps^\theta |B|
\end{equation}
for every $\eps\in (0,1)$, and thus
\begin{equation}
\label{eq:I0}
I_r'(\eps)\ll_{\Omega} 
\eps^\theta |B| \|f\|_{C^0}\left(\int_{U} \omega_\rho(v)\,dv\right) \prod_{i=1}^r\|\phi_i\|_{C^0}
\ll_\Omega \eps^\theta \|f\|_{C^0} \prod_{i=1}^r \|\phi_i\|_{C^0}.
\end{equation}
Let us now turn to the problem of estimating $I_{r}''(\eps)$. Since the Riemannian distance $d$ on $G$, restricted to 
$U$, and the Euclidean distance on $U$ are equivalent on a small open identity neighborhood, we see that 
$$
d\left(a(-\overline{t})va(\overline{t}),e\right)\ll e^{-\lfloor \overline {t}\rfloor }\, d(v,e).
$$
for $v\in B_{G}(\rho_0)$ and any $\overline{t}\in S^+$. Hence, using \eqref{eq:c2},
we obtain that for all $i=1,\ldots,r-1$,
$$
\phi_i(a(\overline{s}_{{t}_i}-\overline{s})v x_{\overline{s},u})
=\phi_i(a(\overline{s}_{{t}_i}-\overline{s}) x_{\overline{s},u})+O\left(e^{-c_2 D}\, \|\phi_i\|_{Lip}\right),
$$
and thus, for all $v\in B_G(\rho_0)$,
\begin{equation}
\label{eq:pprod}
\prod_{i=1}^{r-1}\phi_i(a(\overline{s}_{{t}_i}-\overline{s})v x_{\overline{s},u})
=\prod_{i=1}^{r-1}\phi_i(a(\overline{s}_{{t}_i}-\overline{s}) x_{\overline{s},u})+O_r\left(e^{-c_2 D}\, \prod_{i=1}^{r-1}\|\phi_i\|^*_{Lip}\right),
\end{equation}
where $\|\phi\|^*_{Lip}:=\max(\|\phi\|_{C^0},\|\phi_i\|_{Lip})$.
This leads to the estimate
\begin{align*}
I_r''(\eps)=&\int_{B\backslash B_\eps} 
\prod_{i=1}^{r-1}
\phi_i(a(\overline{s}_{{t}_i}-\overline{s}) x_{\overline{s},u})
\left( \int_{U} f_{\overline{s},u}(v) \phi_r(a(\overline{s}_{{t}_r}-\overline{s})v x_{\overline{s},u})\, dv  \right)\, du \\
&+O_r\left(e^{-c_2 D} \prod_{i=1}^{r-1}\|\phi_i\|^*_{Lip} \left(\int_{B\backslash B_\eps} \|f_{\overline{s},u}\|_{L^1(U)}\, du\right) \|\phi_r\|_{C^0}\right).
\end{align*}
Hence, using \eqref{eq:integral0}, we obtain that 
\begin{align*}
I_r''(\eps) =&\int_{B\backslash B_\eps} 
\prod_{i=1}^{r-1}
\phi_i(a(\overline{s}_{{t}_i}-\overline{s}) x_{\overline{s},u})
\left( \int_{U} f_{\overline{s},u}(v) \phi_r\left(a(\overline{s}_{{t}_r}-\overline{s})v x_{\overline{s},u}\right)\, dv  \right)\, du \\
&+O_r\left(e^{-c_2 D} \|f\|_{L^1(U)}\prod_{i=1}^{r}\|\phi_i\|^*_{Lip}\right).
\end{align*}
Since by \eqref{eq:c3},
$$
\int_{U} f_{\overline{s},u}(v) \phi_r(a(\overline{s}_{{t}_r}-\overline{s})v x_{\overline{s},u})\, dv=
\int_{U} f_{\overline{s},u}(v) \phi_r(b_z v x_{\overline{s},u})\, dv,
$$
we apply Theorem \ref{p:km} to estimate this integral.
We recall that $\hbox{supp}(f_{\overline{s},u})\subset B_{G}(\rho)$.
For $u\in B\backslash B_\eps$, we have $x_{\overline{s},u}\in \cK_\eps$,
so that $\iota(x_{\overline{s},u})\gg \eps^{m+n}$ by Proposition \ref{p:iota}.
In particular, we may take 
$$
\eps\gg \rho^{1/(m+n)}
$$
to arrange that $\iota(x_{\overline{s},u})>2\rho$. Hence, by applying
Theorem \ref{p:km}, we deduce that there exist $c, \gamma > 0$ such that 
\begin{align*}
\int_{U} f_{\overline{s},u}(v) \phi_r(b_z v x_{\overline{s},u})\, dv
=&\left(\int_{U} f_{\overline{s},u}(v)\, dv \right)
\left(\int_{\cX} \phi_r\, d\mu_\cX \right)\\
&+O\left( \rho\|f_{\overline{s},u}\|_{L^1(U)} \|\phi_r\|_{Lip} + \rho^{-c} e^{-\gamma z}\, \|f_{\overline{s},u}\|_{C^k} \|\phi_r\|_{L^2_k(\cX)}\right),
\end{align*}
for all $u\in B\backslash B_\eps$. Using \eqref{eq:sobolev0}--\eqref{eq:integral0} and $z\ge c_3\,D$, we deduce that
\begin{align*}
I_r''(\eps)=& \left(\int_{B\backslash B_\eps} 
\prod_{i=1}^{r-1}
\phi_i(a(\overline{s}_{{t}_i}-\overline{s}) x_{\overline{s},u})
\left( \int_{U} f_{\overline{s},u}(v)\, dv  \right)
\, du \right)\left(\int_{\cX} \phi_r\, d\mu_\cX \right)\\
&+O_{r,B}\left( \prod_{i=1}^{r-1}\|\phi_i\|_{C^0} \left(\rho\|f\|_{L^1(U)} \|\phi_r\|_{Lip} + \rho^{-(c+\sigma)} e^{-\gamma c_3\,D}\, \|f\|_{C^k}\|\phi_r\|_{L^2_k(\cX)}\right)\right)\\
&+O_r\left(e^{-c_2 D} \|f\|_{L^1(U)}\prod_{i=1}^{r}\|\phi_i\|^*_{Lip}\right).
\end{align*}
Since $\|f\|_{L^1(U)}\ll_\Omega \|f\|_{C^k}$, 
\begin{align}\label{eq:eee}
I_r''(\eps)=& \left(\int_{B\backslash B_\eps} 
\prod_{i=1}^{r-1}
\phi_i(a(\overline{s}_{{t}_i}-\overline{s}) x_{\overline{s},u})
\left( \int_{U} f_{\overline{s},u}(v)\, dv  \right)
\, du \right)\left(\int_{\cX} \phi_r\, d\mu_\cX \right)\\
&+O_{r,\Omega}\left( \left(\rho + \rho^{-(c+\sigma)} e^{-\gamma c_3\,D} +e^{-c_2 D}\right)
\|f\|_{C^k}\prod_{i=1}^{r} \cN_k(\phi_i) \right).\nonumber
\end{align}
Applying \eqref{eq:pprod} one more time (in the backward direction), we get
\begin{align*}
&\int_{B\backslash B_\eps} 
\prod_{i=1}^{r-1}
\phi_i(a(\overline{s}_{{t}_i}-\overline{s}) x_{\overline{s},u})
\left( \int_{U} f_{\overline{s},u}(v)\, dv\right)\, du\\
=&
\int_{B\backslash B_\eps} 
\left( \int_{U} f_{\overline{s},u}(v)\prod_{i=1}^{r-1}
\phi_i(a(\overline{s}_{{t}_i}-\overline{s}) vx_{\overline{s},u})\, dv\right) \, du\\
&+O_r\left(e^{-c_2 D}\, \left(\int_B \|f_{\overline{s},u}\|_{L^1(U)}\,du\right) \prod_{i=1}^{r-1}\|\phi_i\|^*_{Lip}\right).
\end{align*}
It follows from \eqref{eq:B} that
\begin{align*}
&\int_{B\backslash B_\eps} 
\left( \int_{U} f_{\overline{s},u}(v)\prod_{i=1}^{r-1}
\phi_i(a(\overline{s}_{{t}_i}-\overline{s}) vx_{\overline{s},u})\, dv\right) \, du\\
=&
\int_{B} 
\left( \int_{U} f_{\overline{s},u}(v)\prod_{i=1}^{r-1}
\phi_i(a(\overline{s}_{{t}_i}-\overline{s}) vx_{\overline{s},u})\, dv\right) \, du\\
&+O_r \left(\eps^\theta 
\left(\int_B \|f_{\overline{s},u}\|_{L^1(U)}\,du\right) \prod_{i=1}^{r-1}\|\phi_i\|_{C^0}\right),
\end{align*}
where we recognize the first term as $I_{r-1}$. Using \eqref{eq:I} and \eqref{eq:integral0}, we now conclude that
\begin{align*}
&\int_{B\backslash B_\eps} 
\prod_{i=1}^{r-1}
\phi_i(a(\overline{s}_{{t}_i}-\overline{s}) x_{\overline{s},u})
\left( \int_{U} f_{\overline{s},u}(v)\, dv\right)\, du\\
=&I_{r-1}
+O_{r} \left(\left(\eps^\theta + e^{-c_2 D}\right)
\|f\|_{L^1(U)} \prod_{i=1}^{r-1}\|\phi_i\|^*_{Lip}\right).
\end{align*}
Hence, combining this estimate with \eqref{eq:eee}, we deduce that
\begin{align*}
I_r''(\eps)=& I_{r-1}\left(\int_{\cX} \phi_r\, d\mu_\cX \right)\\
&+O_{r,\Omega}\left( \left(\eps^\theta+\rho + \rho^{-(c+\sigma)} e^{-\gamma c_3\,D} +e^{-c_2 D}\right)
\|f\|_{C^k}\prod_{i=1}^{r} \cN_k(\phi_i) \right),
\end{align*}
and thus, in view of \eqref{eq:I0}, 
\begin{align*}
I_r=& I_{r-1}'(\eps)+I_{r-1}''(\eps)=I_{r-1}\left(\int_{\cX} \phi_r\, d\mu_\cX \right)\\
&+O_{r,\Omega}\left( \left(\eps^\theta+\rho + \rho^{-(c+\sigma)} e^{-\gamma c_3\,D} +e^{-c_2 D}\right)
\|f\|_{C^k}\prod_{i=1}^{r} \cN_k(\phi_i) \right).
\end{align*}
This estimate holds whenever $\rho<\rho_0$ and $\eps\gg \rho^{1/(m+n)}$. Taking $\rho=e^{-c_4 D}$ for sufficiently small $c_4>0$ and $\eps\ll \rho^{1/(m+n)}$,
we conclude that there exists $\delta>0$ such that for all sufficiently large $D$,
\begin{align}\label{eq:l2l}
I_r=& I_{r-1}\left(\int_{\cX} \phi_r\, d\mu_\cX \right)+
O_{r,\Omega}\left( e^{-\delta D}\,
\|f\|_{C^k}\prod_{i=1}^{r} \cN_k(\phi_i) \right).
\end{align}
The exponent $\delta$ depends on the constants $c_2$ and $c_3$ given by Lemma \ref{l:s}
and the parameters $\theta,c,\sigma,\gamma$. In particular, $\delta$ is independent of $r$.
By possibly enlarging the implicit constants we can ensure that the estimate \eqref{eq:l2l} also holds 
for all $r$-tuples $(t_1,\ldots,t_r)$, and not just the ones with sufficiently large $D(t_1,\ldots,t_r)$.
By iterating the estimate \eqref{eq:l2l}, using that $I_0$ is a constant, the proof of Theorem \ref{th:corr} is finished.

\section{CLT for functions with compact support}\label{sec:CLT_compact}

Let $a=\hbox{diag}(a_1,\ldots,a_{m+n})$ 
be a diagonal linear map of $\bR^{m+n}$
with 
$$
a_1,\ldots,a_m>1,\quad 0<a_{m+1},\ldots,a_{m+n} <1,\quad\hbox{and}\quad a_1\cdots a_{m+n}=1.
$$
The map $a$ defines a continuous self-map of the space $\cX$, which preserves $\mu_{\cX}$.
We recall that the torus $\cY=U\bZ^{m+n}\subset \cX$ is equipped with the probability measure $\mu_\cY$.
In this section, we shall prove a Central Limit Theorem for the averages
\begin{equation}
\label{eq:FNcpt}
F_N:=\frac{1}{\sqrt{N}} \sum_{s=0}^{N-1} \left(\phi\circ a^s-\mu_\cY(\phi\circ a^s)\right),
\end{equation}
\emph{restricted to $\cY$}, for a fixed function $\phi \in C^\infty_c(\cX)$. We stress that this result is
not needed in the proof of Theorem \ref{th:CLT_forms}, but we nevertheless include it here because
we feel that its proof might be instructive before entering the proof of the similar, but far more 
technical, Theorem \ref{th:CLT_char_Siegel}.

\begin{theorem}\label{th:CLT_compact}
For every $\xi\in \mathbb{R}$, 
$$
\mu_\cY \left(\{y\in \cY:\, F_N(y)<\xi\}\right)\to \hbox{\rm Norm}_{\sigma_\phi}(\xi)
$$	
as $N\to \infty$, where 
$$
\sigma_\phi^2:=\sum_{s=-\infty}^\infty \left( \int_{\cX} (\phi\circ a^s)\phi \, d\mu_\cX
-\mu_\cX(\phi)^2\right).
$$
\end{theorem}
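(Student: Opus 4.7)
The plan is to apply the method of cumulants, packaged in Proposition \ref{th:CLT}. Since the normal law $\hbox{Norm}_{\sigma_\phi}$ is uniquely determined by its moments and has all cumulants of order $\geq 3$ equal to zero, it suffices to show $\cum^{(2)}(F_N)\to \sigma_\phi^2$ and $\cum^{(r)}(F_N)\to 0$ for every $r\geq 3$, together with uniform-in-$N$ moment bounds that validate the cumulant method. Writing $X_s := \phi\circ a^s - \mu_\cY(\phi\circ a^s)$, so that $F_N = N^{-1/2}\sum_{s=0}^{N-1}X_s$, multilinearity of joint cumulants gives
\[
\cum^{(r)}(F_N) \;=\; \frac{1}{N^{r/2}} \sum_{(s_1,\ldots,s_r)\in\{0,\ldots,N-1\}^r} \kappa_r(s_1,\ldots,s_r),
\]
where $\kappa_r(s_1,\ldots,s_r)$ is the joint cumulant of $(X_{s_1},\ldots,X_{s_r})$. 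Recall that by the moment--cumulant formula, $\kappa_r$ is an alternating sum over set partitions of products of mixed moments $\mathbb{E}_{\mu_\cY}\!\big[\prod_{i\in B} X_{s_i}\big]$, and crucially $\kappa_r$ vanishes whenever the variables split into two mutually independent blocks.

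The variance case ($r=2$) is subtle: applying Corollary \ref{cor:corr0} directly with $\phi_1=\phi_2=\phi$ only yields the factorized leading term $\mu_\cX(\phi)^2$, which cancels against the centering and so hides the true covariance inside the $O(e^{-\delta' D(s_1,s_2)})$ error. To extract $\sigma_\phi^2$, I combine Corollary \ref{cor:corr0} with the mixing estimate of Theorem \ref{th:cor0} on $\cX$: using equidistribution of $a^{s_1}\cY$ to approximate $\mathbb{E}_{\mu_\cY}[X_{s_1}X_{s_2}]$ by its $\mu_\cX$-counterpart and then invoking $a$-invariance, one obtains $\kappa_2(s_1,s_2) = C(s_2-s_1) + (\text{summable error})$, where $C(\Delta):=\int_\cX \phi\cdot(\phi\circ a^\Delta)\,d\mu_\cX-\mu_\cX(\phi)^2$ and $C(\Delta)=O(e^{-\gamma|\Delta|})$ by Theorem \ref{th:cor0}. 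A standard Ces\`aro-type summation $\frac{1}{N}\sum_{|\Delta|<N}(N-|\Delta|)C(\Delta)\to \sum_{\Delta\in\bZ} C(\Delta)=\sigma_\phi^2$, combined with control of the boundary pairs via exponential decay, completes the variance identification.

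For $r\geq 3$ I decompose the tuples by a threshold $M:=C_r\log N$, where $C_r$ is chosen large in terms of $r$ and $\delta'$. Call $(s_1,\ldots,s_r)$ \emph{separated} if some consecutive gap in the sorted tuple is $\geq M$, and \emph{clustered} otherwise. For separated tuples the indices split into $\geq 2$ clusters at mutual distance $\geq M$; applying Corollary \ref{cor:corr0} to every mixed moment appearing in the expansion of $\kappa_r$ factorizes each moment across the two clusters up to error $O(e^{-\delta' M})$, and the combinatorial identity that cumulants vanish under independence forces all leading factorized terms in the alternating sum to cancel. This gives $|\kappa_r(s_1,\ldots,s_r)|\ll_r e^{-\delta' M}=N^{-\delta' C_r}$, so the total contribution of separated tuples is bounded by $N^{r/2-\delta' C_r}$, which vanishes once $C_r>r/(2\delta')$. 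For clustered tuples, all coordinates lie in an interval of length $\leq rM$; there are at most $O_r(N M^{r-1})$ such tuples, and the trivial bound $|\kappa_r|\ll_r \|\phi\|_{C^0}^r$ gives a contribution $O_r(N^{1-r/2}(\log N)^{r-1}) = o(1)$ for $r\geq 3$.

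The main obstacle is the separated-tuple step: executing the combinatorial cancellation in the moment--cumulant expansion between the factorized leading terms, uniformly over the partition structure, and keeping the residual error of order $e^{-\delta' M}$ controlled by $r$-dependent constants. This is precisely the \emph{intricate cancellation of cumulants} alluded to in the introduction, and the abstract cumulant-bound framework developed in \cite{BG} is what makes it effective. Once these cancellations and the $r=2$ variance identification are in place, the required uniform moment bounds follow from the same cumulant estimates, and Proposition \ref{th:CLT} converts cumulant convergence into convergence in distribution to $\hbox{Norm}_{\sigma_\phi}$.
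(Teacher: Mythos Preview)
Your overall strategy matches the paper's: verify the hypotheses of Proposition~\ref{th:CLT} by computing the variance via equidistribution and Theorem~\ref{th:cor0}, and kill the higher cumulants using Corollary~\ref{cor:corr0} together with the conditional-cumulant vanishing (Proposition~\ref{p:cum_cond}). The variance sketch is essentially what the paper does in Section~\ref{sec:var_compact}.

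There is, however, a genuine gap in your treatment of the ``separated'' tuples for $r\ge 3$. You claim that if some consecutive gap in $(s_1,\ldots,s_r)$ is $\ge M$, then Corollary~\ref{cor:corr0} factorizes each mixed moment across the two resulting clusters up to $O(e^{-\delta' M})$. But Corollary~\ref{cor:corr0} only yields factorization into \emph{singletons}, with error governed by $D(t_1,\ldots,t_r)=\min\{t_i,|t_i-t_j|\}$; it says nothing when merely one gap is large while others (or some $s_i$ itself) are small. To obtain cluster-factorization one must group the factors within each cluster into a single test function $\Phi_h=\prod_{i\in I_h}\phi\circ a^{s_i-s_{i_h}}$ and apply the corollary to the $\Phi_h$'s. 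This introduces a $C^k$-norm blow-up $\|\Phi_h\|_{C^k}\ll e^{\xi\cdot(\text{cluster diameter})}\|\phi\|_{C^k}^{|I_h|}$, and in your ``separated'' regime the cluster diameters are completely uncontrolled (they can be of order $N$), so the error term $e^{-\delta' M}\prod_h\|\Phi_h\|_{C^k}$ is useless. A single threshold $M$ cannot work here.

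This is exactly why the paper does \emph{not} use a two-set dichotomy but instead invokes the multi-scale decomposition \eqref{eq:decomp} from \cite{BG}: one covers $\{0,\ldots,N-1\}^r$ (augmented by the extra coordinate $s_0=0$, needed because $D$ also involves $\min_i s_i$) by the diagonal piece $\Omega(\beta_{r+1};N)$ together with sets $\Omega_{\cQ}(\alpha_j,\beta_{j+1};N)$ on which the within-block diameter is at most $\alpha_j$ while the between-block separation exceeds $\beta_{j+1}$. The parameters are then chosen recursively (see \eqref{eq:alpha}--\eqref{eq:beta}) so that $\delta\beta_{j+1}>r\xi\alpha_j$, which is precisely the condition making the grouped error $e^{-(\delta\beta_{j+1}-r\xi\alpha_j)}$ small. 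Your reference to \cite{BG} is apt, but the concrete mechanism you describe (one threshold, two clusters) would fail; the hierarchy of scales is not a cosmetic refinement but the point of the argument.
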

\begin{remark}
It follows from Theorem \ref{th:cor0} that the variance $\sigma_\phi$ is finite. 
\end{remark}

Our main tool in the proof of Theorem \ref{th:CLT_compact} will be the estimates on higher-order correlations 
established in Section \ref{sec:correlations}. To make notations less heavy, we shall use a simplified version of 
Corollary \ref{cor:corr0} stated in terms of $C^k$-norms (note that $\cN_k \ll \|\cdot\|_{C^k}$):
\begin{corollary}\label{cor:corr}
	There exists $\delta>0$ such that for every  
	$\phi_0,\ldots,\phi_r\in C_c^\infty(\mathcal{X})$
	and $t_1,\ldots,t_r>0$, we have 
	\begin{align*}
	\int_{\mathcal{Y}} \phi_0(y) \left( \prod_{i=1}^r \phi_i(a_{t_i}y) \right )d\mu_\cY(y)=&  \left(\int_{\mathcal{Y}}\phi_0 \, d\mu_\cY \right)\prod_{i=1}^r \left(\int_{\mathcal{X}}\phi_i \, d\mu_\cX\right)\\
	& +O_{r}\left( e^{-\delta D(t_1,\ldots,t_r)}\, \prod_{i=0}^r \|\phi_i\|_{C^k}\right).
	\end{align*}
\end{corollary}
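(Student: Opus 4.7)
The plan is to derive this corollary as an immediate consequence of Corollary \ref{cor:corr0}, by verifying two simple comparisons between the norms that appear there and the $C^k$-norms in the present statement. The potentially substantive point is that Corollary \ref{cor:corr0} requires the test function on $\cY$ to be in $C^\infty(\cY)$, whereas here we are given $\phi_0 \in C_c^\infty(\cX)$; so we first pass to the restriction $\phi_0|_\cY$, treated as a smooth function on $\cY$.

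Concretely, I would first observe that for any $\phi \in C_c^\infty(\cX)$ one has $\cN_k(\phi) \ll \|\phi\|_{C^k}$, where the implicit constant depends only on the fixed Riemannian structure on $G$ and on the chosen ordered basis of $\hbox{Lie}(G)$. This reduces to three elementary inequalities: $\|\phi\|_{C^0} \leq \|\phi\|_{C^k}$ trivially; $\|\phi\|_{Lip} \ll \|\phi\|_{C^1} \leq \|\phi\|_{C^k}$, because the Lipschitz constant of $\phi$ with respect to the distance $d$ coming from a right-invariant Riemannian metric on $G$ is bounded by the supremum of the first-order directional derivatives $\cD_Y \phi$ along a unit-length basis of $\hbox{Lie}(G)$; and $\|\phi\|_{L^2_k(\cX)} \ll \|\phi\|_{C^k}$, which is immediate from the definition \eqref{eq:sob_def0} together with the fact that $\phi$ has compact support in $\cX$ and $\mu_\cX$ is a probability measure, so the $L^2$-norms on the right-hand side of \eqref{eq:sob_def0} are bounded by the corresponding $C^0$-norms of the derivatives $\cD_Z \phi$.

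Second, I would check that the restriction $\phi_0|_\cY$ belongs to $C^\infty(\cY)$ with $\|\phi_0|_\cY\|_{C^k} \ll \|\phi_0\|_{C^k}$. This holds because $\cY = U\bZ^{m+n}$ is the image of a smooth immersion by the abelian subgroup $U$, so that the tangent space to $\cY$ at any point is spanned by elements of $\hbox{Lie}(U) \subset \hbox{Lie}(G)$; any iterated directional derivative of $\phi_0|_\cY$ along a basis of $\hbox{Lie}(U)$ equals the corresponding iterated directional derivative of $\phi_0$, and hence is controlled by $\|\phi_0\|_{C^k}$.

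Finally, I would simply substitute the restriction $\phi_0|_\cY$ and the functions $\phi_1,\ldots,\phi_r \in C_c^\infty(\cX)$ into Corollary \ref{cor:corr0}, noting that $\int_\cY \phi_0|_\cY \, d\mu_\cY = \int_\cY \phi_0 \, d\mu_\cY$, and then replace $\|\phi_0|_\cY\|_{C^k} \prod_{i=1}^r \cN_k(\phi_i)$ on the right-hand side by $\prod_{i=0}^r \|\phi_i\|_{C^k}$ using the two norm comparisons above. This gives the stated estimate with the same exponent $\delta$ as in Corollary \ref{cor:corr0}, at the price only of enlarging the implicit constant. There is no real obstacle here; the entire argument is a bookkeeping exercise in comparing Sobolev, Lipschitz and uniform norms for smooth compactly supported functions on a homogeneous space.
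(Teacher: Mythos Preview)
Your proposal is correct and takes essentially the same approach as the paper, which simply remarks that Corollary~\ref{cor:corr} is ``a simplified version of Corollary~\ref{cor:corr0} stated in terms of $C^k$-norms (note that $\cN_k \ll \|\cdot\|_{C^k}$)''. You are in fact more careful than the paper in spelling out the passage from $\phi_0\in C_c^\infty(\cX)$ to its restriction $\phi_0|_\cY\in C^\infty(\cY)$ and the accompanying norm comparison; the paper leaves this implicit.
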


\subsection{The method of cumulants}

Let $(\cX,\mu)$ be a probability space. Given bounded measurable functions $\phi_1,\ldots,\phi_r$
on $\cX$, we define their \emph{joint cumulant} as
$$
\cum^{(r)}_\mu(\phi_1,\ldots,\phi_r) = \sum_{\cP} (-1)^{|\cP|-1} (|P|-1)!\prod_{I \in \cP} 
\int_{\cX}\Big(\prod_{i\in I} \phi_i\Big)\, d\mu,
$$
where the sum is taken over all partitions $\cP$ of the set $\{1,\ldots,r\}$.
When it is clear from the context, we skip the subscript $\mu$.
For a bounded measurable function $\phi$ on $\cX$, we also set
$$
\cum^{(r)}(\phi) = \cum^{(r)}(\phi,\ldots,\phi).
$$
We shall use the following classical CLT-criterion (see, for instance, \cite{FS}).

\begin{proposition} 
	\label{th:CLT}
	Let $(F_T)$ be a sequence of real-valued bounded measurable functions such that
	\begin{equation}
	\label{eq:cond1}
	\int_\cX F_T\, d\mu=0 \qand \sigma^2 := \lim_{T\to\infty} \int_{\cX}F_T^2\, d\mu < \infty 
	\end{equation}
	and
	\begin{equation}
	\lim_{T\to\infty} \cum^{(r)}(F_T) = 0, \quad \textrm{for all $r \geq 3$}.\label{eq:cond2}
	\end{equation}
	Then for every $\xi\in\mathbb{R}$, 
	$$
	\mu(\{F_T<\xi\})	\rightarrow \hbox{\rm Norm}_\sigma (\xi)\quad \hbox{	as $T\to\infty$.}
	$$
\end{proposition}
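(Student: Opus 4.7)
The plan is to invoke the classical \emph{method of moments}: I will show that for every integer $r\ge 0$ the moment $\int_\cX F_T^r\, d\mu$ converges to the corresponding moment of $\hbox{\rm Norm}_\sigma$, and then use the fact that this Gaussian is determined by its moments (for instance by Carleman's criterion) to conclude convergence in distribution at every $\xi\in\bR$.

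The starting point is the moment-cumulant inversion formula
\begin{equation*}
\int_{\cX} F^r\, d\mu \;=\; \sum_{\cP} \prod_{I\in\cP} \cum^{(|I|)}(F),
\end{equation*}
valid for any bounded measurable $F$, the sum running over all set partitions $\cP$ of $\{1,\ldots,r\}$. This identity is the M\"obius inverse of the relation defining cumulants in the paper, specialized to $\phi_1=\cdots=\phi_r=F$. I would apply it with $F=F_T$ and pass to the limit term by term. Hypothesis \eqref{eq:cond1} gives $\cum^{(1)}(F_T)=0$ and $\cum^{(2)}(F_T)\to\sigma^2$, while hypothesis \eqref{eq:cond2} gives $\cum^{(j)}(F_T)\to 0$ for every $j\ge 3$. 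Consequently, only those partitions survive in the limit whose blocks all have size exactly two: any partition containing a singleton is killed by $\cum^{(1)}=0$, and any partition containing a block of size $\ge 3$ is killed by \eqref{eq:cond2}, since the remaining $\cum^{(2)}$-factors stay bounded (they converge to $\sigma^2$). Such perfect matchings exist only when $r=2k$ is even, there are $(2k-1)!!$ of them, and each contributes $\sigma^{2k}$, yielding
\begin{equation*}
\lim_{T\to\infty} \int_{\cX} F_T^r\, d\mu \;=\; \begin{cases} 0, & r \text{ odd},\\ (2k-1)!!\,\sigma^{2k}, & r = 2k, \end{cases}
\end{equation*}
which is precisely the $r$-th moment of the centered normal law of variance $\sigma^2$.

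Because the Gaussian moment sequence $(2k-1)!!\,\sigma^{2k}$ grows slowly enough to determine the distribution uniquely, convergence of all moments of $F_T$ to these Gaussian moments upgrades to convergence of the distribution functions at every continuity point of the limit; the Gaussian distribution function is continuous on $\bR$, so convergence holds at every $\xi\in\bR$, as required. The only mildly delicate step is the combinatorial collapse of the moment-cumulant sum to perfect matchings: there the uniform-in-$T$ boundedness of $\cum^{(2)}(F_T)$, which is automatic from \eqref{eq:cond1}, is what allows partitions containing a block of size $\ge 3$ to be killed by \eqref{eq:cond2}. Everything else is routine bookkeeping and an appeal to the classical moment problem.
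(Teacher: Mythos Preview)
Your proof is correct and is precisely the approach the paper has in mind: the paper does not prove this proposition but cites it as the classical CLT criterion from Fr\'echet--Shohat \cite{FS}, explicitly remarking that ``since all the moments of a random variable can be expressed in terms of its cumulants, this criterion is equivalent to the more widely known `Method of Moments'.'' Your argument---invert the moment-cumulant relation, collapse to pair partitions, and invoke determinacy of the Gaussian moment problem---is exactly that equivalence spelled out.
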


Since all the moments of a random variable can be expressed in terms of its cumulants,
this criterion is equivalent to the more widely known ``Method of Moments''. However, the
cumulants have curious, and very convenient, cancellation properties that will play an important
role in our proof of Theorem \ref{th:CLT_compact}. \\

For a partition $\cQ$ of $\{1,\ldots,r\}$, we define the \emph{conditional joint cumulant}
with respect to $\cQ$ as
$$
\cum^{(r)}_\mu(\phi_1,\ldots,\phi_r|\cQ) = \sum_{\cP} (-1)^{|\cP|-1} (|\cP|-1)!\prod_{I \in \cP} 
\prod_{J \in \cQ}
\int_{\cX}\Big(\prod_{i\in I\cap J} \phi_i\Big)\, d\mu,
$$
In what follows, we shall make frequent use of the following proposition.

\begin{proposition}[\cite{BG}, Prop.~8.1]\label{p:cum_cond}
For any partition $\cQ$ with $|\cQ|\ge 2$,
\begin{equation}
\label{eq:conditional}
\cum^{(r)}_\mu(\phi_1,\ldots,\phi_r|\cQ)=0,
\end{equation}
for all $\phi_1,\ldots,\phi_r \in L^\infty(\cX,\mu)$.
\end{proposition}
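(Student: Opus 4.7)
The plan is to reduce the statement to the classical fact that the joint cumulant of a collection of random variables vanishes whenever the collection splits into two non-empty groups that are mutually independent. To set this up, write $\cQ=\{J_1,\ldots,J_q\}$ with $q=|\cQ|\ge 2$, and form the product probability space $(\cX^q,\mu^{\otimes q})$. For each $i\in\{1,\ldots,r\}$, let $k(i)$ denote the unique index with $i\in J_{k(i)}$, and lift $\phi_i$ to the function
$$
\tilde{\phi}_i(x_1,\ldots,x_q) := \phi_i(x_{k(i)})
$$
on $\cX^q$. Under $\mu^{\otimes q}$ the integral factorises over the coordinates, so that for every $I\subseteq\{1,\ldots,r\}$,
$$
\int_{\cX^q}\prod_{i\in I}\tilde{\phi}_i\,d\mu^{\otimes q}=\prod_{J\in\cQ}\int_{\cX}\prod_{i\in I\cap J}\phi_i\,d\mu.
$$
Substituting this identity blockwise into the defining sum of the ordinary joint cumulant of $\tilde{\phi}_1,\ldots,\tilde{\phi}_r$, I obtain the key identification
$$
\cum_{\mu^{\otimes q}}^{(r)}(\tilde{\phi}_1,\ldots,\tilde{\phi}_r)=\cum_\mu^{(r)}(\phi_1,\ldots,\phi_r\,|\,\cQ).
$$

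Having rephrased the conditional cumulant as an ordinary joint cumulant on a product space, the second step is to invoke the classical vanishing principle: if $X_1,\ldots,X_r$ are bounded measurable random variables and $\{1,\ldots,r\}=A\sqcup A^c$ is a non-trivial partition such that $\{X_i:i\in A\}$ is independent of $\{X_i:i\in A^c\}$, then $\cum^{(r)}(X_1,\ldots,X_r)=0$. In our situation, since $q\ge 2$ I may take $A=J_1$: the variables $\tilde{\phi}_i$ for $i\in J_1$ depend only on the first factor of $\cX^q$, while those for $i\notin J_1$ depend only on the remaining $q-1$ factors, and under $\mu^{\otimes q}$ these two families are independent. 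The principle then yields \eqref{eq:conditional}.

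The main obstacle, if one wants a fully self-contained argument, is to prove the independence-vanishing property. The slickest route is analytic: the logarithm of the joint moment generating function of independent groups is a sum over the groups, so any mixed partial derivative that differentiates at least once in each group vanishes at the origin. A purely combinatorial alternative, in the spirit of \cite{BG}, is to rewrite the cumulant in terms of the partition lattice. Setting $m(S):=\int_\cX\prod_{i\in S}\phi_i\,d\mu$ and using that the blocks of the form $I\cap J$ with $I\in\cP$, $J\in\cQ$ are precisely the blocks of the meet $\cP\wedge\cQ$, one gets
$$
\cum_\mu^{(r)}(\phi_1,\ldots,\phi_r\,|\,\cQ)=\sum_{\cP}(-1)^{|\cP|-1}(|\cP|-1)!\prod_{K\in\cP\wedge\cQ}m(K),
$$
and then groups the summands according to the value of $\cP\wedge\cQ$. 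By a standard Möbius inversion on the partition lattice, the resulting coefficient in front of $\prod_{K\in\cR}m(K)$ is the Möbius function $\mu(\cR,\hat 1)$ summed over all $\cP$ with $\cP\wedge\cQ=\cR$; this sum telescopes to zero whenever $\cR\neq\hat 1$, and the requirement $\cR\leq\cQ$ with $\cQ\neq\hat 1$ forces exactly this, producing the desired cancellation. Either route completes the proof.
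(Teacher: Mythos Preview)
The paper does not actually prove this proposition; it simply quotes it from the companion paper \cite{BG}. So there is no in-paper argument to compare against, and the question is only whether your proof stands on its own.

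It does. The product-space trick is exactly the right idea: lifting each $\phi_i$ to $\tilde\phi_i$ on $(\cX^q,\mu^{\otimes q})$ so that it depends only on the coordinate indexed by its $\cQ$-block, and then checking block by block that
\[
\int_{\cX^q}\prod_{i\in I}\tilde\phi_i\,d\mu^{\otimes q}=\prod_{J\in\cQ}\int_{\cX}\prod_{i\in I\cap J}\phi_i\,d\mu
\]
for every $I$, is clean and correct; this immediately identifies the conditional cumulant with the ordinary joint cumulant of the $\tilde\phi_i$. Since $|\cQ|\ge 2$, the family $\{\tilde\phi_i:i\in J_1\}$ is independent of $\{\tilde\phi_i:i\notin J_1\}$ under the product measure, and the classical vanishing of joint cumulants under a non-trivial independence split finishes the argument. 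Your generating-function justification of that classical fact is the standard one and is entirely adequate here since the $\phi_i$ are bounded.

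One small comment on your combinatorial alternative: the sentence ``this sum telescopes to zero whenever $\cR\neq\hat 1$'' is doing a lot of work and would need a line or two more to be a proof (it is essentially Weisner's lemma in the partition lattice, or equivalently the statement that $\sum_{\cP:\cP\wedge\cQ=\cR}\mu(\cP,\hat 1)=0$ when $\cQ\neq\hat 1$). Since you already have a complete argument via independence, this is a cosmetic issue, not a gap.
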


\subsection{Estimating cumulants}\label{sec:cumm_comp}
Fix $\phi \in C_c^\infty(\cX)$. It will be convenient to write $\psi_s(y):=\phi(a^s y)-\mu_\cY (\phi\circ a^s)$,
so that 
$$
F_N:=\frac{1}{\sqrt{N}} \sum_{s=0}^{N-1} \psi_s \qand \int_{\cY} F_N\, d\mu_\cY=0.
$$
In this section, we shall estimate cumulants of the form
\begin{equation}
\label{eq:cum_sum}
\hbox{Cum}_{\mu_\cY}^{(r)}(F_N)=\frac{1}{N^{r/2}} \sum_{s_1,\ldots,s_r=0}^{N-1} 
\hbox{Cum}_{\mu_\cY}^{(r)}(\psi_{s_1},\ldots,\psi_{s_r})
\end{equation}
for $r\ge 3$. Since we shall later need to apply these estimate in cases when the function $\phi$ is allowed to vary 
with $N$, it will be important to keep track of the dependence on $\phi$ in our estimates. \\
 
We shall decompose \eqref{eq:cum_sum} into sub-sums where the parameters $s_1,\ldots,s_r$
are either ``separated'' or ``clustered'', and it will also be important to control their sizes. For this purpose, 
it will be convenient to consider the set  $\{0,\ldots,N-1\}^r$ as a subset of $\bR_+^{r+1}$ via the embedding 
$(s_1,\ldots,s_r)\mapsto (0,s_1,\ldots,s_r)$. Following the ideas developed in the paper \cite{BG}, we define for 
non-empty subsets $I$ and $J$ of $\{0,\ldots, r\}$ and $\overline{s} = (s_0,\ldots,s_r) \in \bR_+^{r+1}$, 
$$
\rho^{I}(\overline{s}) := \max\big\{ |s_i-s_j| \, : \, i,j \in I \big\}\quad\hbox{and}
\quad
\rho_{I,J}(\overline{u}):= \min\big\{ |s_i-s_j| \, : \, i \in I, \enskip j \in J \big\},
$$
and if $\cQ$ is a partition of $\{0,\ldots,r\}$, we set
$$
\rho^{\cQ}(\overline{s}) := \max\big\{ \rho^{I}(\overline{s}) \, : \, I \in \cQ \big\}
\quad\hbox{and}\quad
\rho_{\cQ}(\overline{s}) := \min\big\{ \rho_{I,J}(\overline{s}) \, : \, I \neq J, \enskip I, J \in \cQ \big\}.
$$
For $0 \leq \alpha < \beta$, we define
$$
\Delta_{\cQ}(\alpha,\beta) 
:= 
\big\{ 
\overline{s} \in \bR_+^{r+1} \, : \, 
\rho^{\cQ}(\overline{s}) \leq \alpha, 
\qen
\rho_{\cQ}(\overline{s}) > \beta \big\}
$$
and
$$
\Delta(\alpha):= 
\big\{ 
\overline{s} \in \bR_+^{r+1} \, : \, 
\rho(s_i,s_j) \leq \alpha \hbox{ for all $i,j$}\big\}.
$$
The following decomposition of $\bR_+^{r+1}$ was established in our paper \cite[Prop. 6.2]{BG}: given 
\begin{equation}\label{eq:inequality}
0=\alpha_0<\beta_1<\alpha_1=(3+r)\beta_1<\beta_2<\cdots <\beta_r<\alpha_r=(3+r)\beta_r<\beta_{r+1},
\end{equation}
we have
\begin{equation}
\label{eq:decomp}
\bR_+^{r+1} = \Delta(\beta_{r+1}) \cup \Big( \bigcup_{j=0}^{r} \bigcup_{|\cQ| \geq 2} \Delta_{\cQ}(\alpha_j,\beta_{j+1}) \Big),
\end{equation}
where the union is taken over the partitions $\cQ$ of $\{0,\ldots,r\}$ with $|\cQ|\ge 2$. Upon taking restrictions, we also have 
\begin{equation}\label{eq:decomp_000}
\{0,\ldots,N-1\}^r =\Omega(\beta_{r+1};N) \cup \Big( \bigcup_{j=0}^{r} \bigcup_{|\cQ| \geq 2} \Omega_{\cQ}(\alpha_j,\beta_{j+1};N) \Big),
\end{equation}
for all $N \geq 2$, where 
\begin{align*}
\Omega(\beta_{r+1};N) &:=\{0,\ldots,N-1\}^r \cap \Delta(\beta_{r+1}),\\
\Omega_{\cQ}(\alpha_j,\beta_{j+1};N) &:=\{0,\ldots,N-1\}^r \cap \Delta_{\cQ}(\alpha_j,\beta_{j+1}).
\end{align*}
In order to estimate the cumulant \eqref{eq:cum_sum}, we shall separately estimate the sums over $\Omega(\beta_{r+1};N)$ and $\Omega_{\cQ}(\alpha_j,\beta_{j+1};N)$, the exact choices of the sequences $(\alpha_j)$ and $(\beta_j)$ will be fixed at the very
end of our argument. 

\subsubsection{\textbf{\underline{Case 0:} Summing over $\Omega(\beta_{r+1};N)$}.}

In this case, $s_i\le \beta_{r+1}$ for all $i$, and thus 
$$
|\Omega(\beta_{r+1};N)|\le (\beta_{r+1}+1)^r.
$$
Hence, 
\begin{equation}
\label{eq:cum1}
\sum_{(s_1,\ldots,s_r)\in \Omega(\beta_{r+1};N)} 
\hbox{Cum}_{\mu_\cY}^{(r)}(\psi_{s_1},\ldots,\psi_{s_r})\ll (\beta_{r+1}+1)^r
\|\phi\|_{C^0}^r,
\end{equation}
where the implied constants may depend on $r$, but we shall henceforth omit this subscript to simplify notations. 

\subsubsection{\textbf{\underline{Case 1:} Summing over $\Omega_{\cQ}(\alpha_j,\beta_{j+1};N)$ with
$\cQ=\{\{0\},\{1,\ldots,r\} \}$}.}

In this case, we have
$$
|s_{i_1}-s_{i_2}|\le \alpha_j\quad\hbox{ for all $i_1,i_2$, }
$$
so that
it follows that 
$$
|\Omega_{\cQ}(\alpha_j,\beta_{j+1};N)|\ll N\alpha_{j}^{r-1}.
$$
Hence, 
\begin{equation}
\label{eq:cum2}
\frac{1}{N^{r/2}} \sum_{(s_1,\ldots,s_r)\in \Omega_Q(\alpha_j,\beta_{j+1};N)}
|\hbox{Cum}_{\mu_\cY}^{(r)}(\psi_{s_1},\ldots,\psi_{s_r})|
\ll N^{1-r/2}\alpha_j^{r-1}\|\phi\|_{C^0}^r.
\end{equation}

\subsubsection{\textbf{\underline{Case 2:} Summing over $\Omega_{\cQ}(\alpha_j,\beta_{j+1};N)$ with
	$|\cQ|\ge  2$ and $\cQ\ne \{\{0\},\{1,\ldots,r\} \}$}.}

In this case, the partition $\cQ$ defines a non-trivial partition $\cQ'=\{I_0,\ldots,I_\ell\}$ of $\{1,\ldots,r\}$ such that 
for all $(s_1,\ldots, s_r)\in \Omega_Q(\alpha_j,\beta_{j+1};N)$, we have 
\begin{equation}
\label{eq:ineq0}
|s_{i_1}-s_{i_2}|\le \alpha_j \;\; \hbox{if $i_1\sim_{\cQ'} i_2$} \qand
|s_{i_1}-s_{i_2}|> \beta_{j+1} \;\; \hbox{if $i_1\not\sim_{\cQ'} i_2$},
\end{equation}
and
\[
s_i\le \alpha_j\;\; \hbox{for all $i\in I_0$},\nonumber \qand s_i>\beta_{j+1} \;\; \hbox{for all $i\notin I_0$}. 
\]
In particular,
\begin{equation}
\label{eq:Dcase2}
D(s_{i_1},\ldots, s_{i_\ell})\ge \beta_{j+1},
\end{equation}
Let $I$ be an arbitrary subset of $\{1,\ldots,r\}$; we shall show that 
\begin{equation}
\label{eq:approx}
\int_{\cY} \left( \prod_{i\in I} \psi_{s_i}\right)\, d\mu_\cY
\approx \prod_{h=0}^\ell \left(\int_{\cY} \left(\prod_{i\in I\cap I_h} \psi_{s_i} \right)\,d\mu_\cY\right),
\end{equation}
where we henceforth shall use the convention that the product is equal to one when $I\cap I_h=\emptyset$. \\

Let us estimate  the right hand side of \eqref{eq:approx}. We begin by setting
$$
\Phi_0:=\prod_{i\in I\cap I_0} \psi_{s_i}.
$$
It is easy to see that there exists $\xi=\xi(m,n,k)>0$ such that
\begin{equation}
\label{eq:Phi_0}
\|\Phi_0\|_{C^k}\ll \prod_{i\in I\cap I_0} \|\phi\circ a^{s_i}-\mu_\cY(\phi\circ a^{s_i})\|_{C^k} \ll e^{|I\cap I_0|\xi\, \alpha_j}\,\|\phi\|_{C^k}^{|I\cap I_0|}.
\end{equation}
To prove \eqref{eq:approx}, 
we expand $\psi_{s_i}=\phi\circ a^{s_i}-\mu_\cY(\phi\circ a^{s_i})$
for $i\in I\backslash I_0$ and get
\begin{eqnarray*}
\label{eq:psi00}
\int_{\cY} \left(\prod_{i\in I} \psi_{s_i}\right)\,d\mu_\cY 
&=& 
\sum_{J\subset I\backslash I_0} (-1)^{|I\backslash (J\cup I_0)|} \cdot \\
&\cdot&
\left(\int_{\cY} \Phi_0 \left(\prod_{i\in J} \phi\circ a^{s_i}\right)\,d\mu_\cY\right) 
\prod_{i\in I\backslash (J\cup I_0)} \left(\int_{\cY} (\phi\circ a^{s_i})\,d\mu_\cY\right).
\end{eqnarray*}
We recall that when $i\notin I_0$, we have $s_i\ge \beta_{j+1}$, and thus it follows from Corollary \ref{cor:corr} with $r=1$ that
\begin{equation}\label{eq:psi_0}
\int_{\cY} (\phi\circ a^{s_i})\,d\mu_\cY
=\mu_{\cX}(\phi)+O\left(e^{-\delta \beta_{j+1} }\,\|\phi\|_{C^k}\right), \quad \hbox{ with $i\notin I_0$}.
\end{equation}
To estimate the other integrals in \eqref{eq:psi00}, we also apply Corollary \ref{cor:corr}.
Let us  first fix a subset $J \subset I \setminus I_0$ and for each $1 \leq h \leq l$, we pick $i_h\in I_h$, 
and set
$$
\Phi_h:=\prod_{i\in J\cap I_h} \phi\circ a^{s_i-s_{i_h}}.
$$
Then 
$$
\int_{\cY} \Phi_0 \left(\prod_{i\in J} \phi\circ a^{s_i}\right)\,d\mu_\cY=
\int_{\cY} \Phi_0 \left( \prod_{h=1}^\ell \Phi_h\circ a^{s_{i_h}}
\right)\,d\mu_\cY.
$$
We note that for $i\in I_h$, we have $|s_i-s_{i_h}|\le \alpha_j$, and 
thus there exists $\xi=\xi(m,n,k)>0$ such that
\begin{equation}
\label{eq:sss}
\|\Phi_h\|_{C^k}\ll \prod_{i\in J\cap I_h} \|\phi\circ a^{s_i-s_{i_h}}\|_{C^k} \ll e^{|J\cap I_h|\xi\, \alpha_j}\,\|\phi\|_{C^k}^{|J\cap I_h|}.
\end{equation}
Using \eqref{eq:Dcase2}, Corollary \ref{cor:corr} implies that
\begin{align*}
\int_{\cY} \Phi_0 \left( \prod_{h=1}^\ell \Phi_h\circ a^{s_{i_h}}\right)\,d\mu_\cY
= &\left(\int_{\cY}\Phi_0 \, d\mu_\cY \right)\prod_{h=1}^\ell \left(\int_{\cX}\Phi_h \, d\mu_\cX\right)\\
 &+O\left( e^{-\delta \beta_{j+1}}\, \prod_{h=0}^\ell \|\Phi_h\|_{C^k}\right).
\end{align*}
Using \eqref{eq:Phi_0} and \eqref{eq:sss} and invariance of  the measure $\mu_\cX$, we deduce that
\begin{align*}
\int_{\cY} \Phi_0 \left( \prod_{h=1}^\ell \Phi_h\circ a^{s_{i_h}}\right)\,d\mu_\cY
= &\left(\int_{\cY}\Phi_0 \, d\mu_\cY \right)\prod_{h=1}^\ell \left(\int_{\mathcal{X}}\left(\prod_{i\in J\cap I_h} \phi\circ a^{s_i} \right)\, d\mu_\cX\right)\\
&+O\left( e^{-(\delta \beta_{j+1}-r\xi \alpha_j)}\, \|\phi\|_{C^k}^{|(I\cap I_0)\cup J|}\right).
\end{align*}
Hence, we conclude that
\begin{align}
\int_{\cY} \Phi_0 \left(\prod_{i\in J} \phi\circ a^{s_i}\right)\,d\mu_\cY
= &\left(\int_{\mathcal{Y}}\Phi_0 \, d\mu_\cY \right)\prod_{h=1}^\ell \left(\int_{\mathcal{X}}\left(\prod_{i\in J\cap I_h} \phi\circ a^{s_i} \right)\, d\mu_\cX\right) \label{eq:p000}\\
&+O\left( e^{-(\delta \beta_{j+1}-r\xi \alpha_j)}\,  \|\phi\|_{C^k}^{|(I\cap I_0)\cup J|}\right).\nonumber
\end{align}
We shall choose the parameters $\alpha_j$ and $\beta_{j+1}$ so that 
\begin{equation}\label{eq:i1}
\delta\beta_{j+1}-r\xi\alpha_j>0.
\end{equation}
Substituting \eqref{eq:psi_0} and \eqref{eq:p000} in \eqref{eq:psi00}, we deduce that 
\begin{align} \label{eq:ppsi}
&\int_{\cY} \left(\prod_{i\in I} \psi_{s_i}\right)\,d\mu_\cY  \\ 
=&
\sum_{J\subset I\backslash I_0} (-1)^{|I\backslash (J\cup I_0)|} 
\left(\int_{\mathcal{Y}}\Phi_0 \, d\mu_\cY \right)\prod_{h=1}^\ell \left(\int_{\mathcal{X}}\left(\prod_{i\in J\cap I_h} \phi\circ a^{s_i} \right)\, d\mu_\cX\right) \mu_\cX(\phi)^{|I\backslash (J\cup I_0)|} \nonumber \\
& +O \left( e^{-(\delta \beta_{j+1}-r\xi \alpha_j)}\, \|\phi\|_{C^k}^{|I|}\right).\nonumber
\end{align}

Next, we estimate the right hand side of \eqref{eq:approx}.
Let us fix $1 \leq h \leq l$ and for a subset $J \subset I \cap I_h$, we define 
$$
\Phi_J:=\prod_{i\in J} \phi\circ a^{s_i-s_{i_h}}.
$$
As in \eqref{eq:sss}, for some $\xi>0$,
$$
\|\Phi_J\|_{C^k}\ll \prod_{i\in J} \|\phi\circ a^{s_i-s_{i_h}}\|_{C^k} \ll e^{|J|\xi\, \alpha_j}\,\|\phi\|_{C^k}^{|J|}.
$$
Applying Corollary \ref{cor:corr} to the function $\Phi_J$ and using that $s_{i_h}>\beta_{j+1}$, we get 
\begin{align}
\int_{\cY} \left(\prod_{i\in J} \phi\circ a^{s_i}\right)\,d\mu_\cY
&=\int_{\cY} (\Phi_J\circ a^{s_{i_h}})\,d\mu_\cY \label{eq:PPsi_0}\\
&=\int_{\cX}\Phi_J\, d\mu_\cX+O\left(e^{-\delta \beta_{j+1} }\,\|\Phi_J\|_{C^k}\right) \nonumber\\
&=\int_{\cX}\left(\prod_{i\in J} \phi\circ a^{s_i}\right)\, d\mu_\cX+O\left(e^{-\delta \beta_{j+1} } e^{r\xi\, \alpha_j}\,\|\phi\|_{C^k}^{|J|}\right), \nonumber
\end{align}
where we have used $a$-invariance of $\mu_\cX$.
Combining \eqref{eq:psi_0} and \eqref{eq:PPsi_0}, we deduce that 
\begin{align}
\int_{\cY} \left(\prod_{i\in I\cap I_h} \psi_{s_i}\right)\,d\mu_\cY \label{eq:prod}
=&
\sum_{J\subset I\cap I_h} (-1)^{|(I\cap I_h)\backslash J|} 
\left(\int_{\cX} \left(\prod_{i\in J} \phi\circ a^{s_i}\right)\,d\mu_\cX\right)
\mu_{\cX}(\phi)^{|(I\cap I_h)\backslash J|}\\
&+O\left(e^{-\delta \beta_{j+1} } e^{r\xi\, \alpha_j}\,\|\phi\|_{C^k}^{|I\cap I_h|}\right) \nonumber\\
=& \int_{\cX} \prod_{i\in I\cap I_h} \left(\phi\circ a^{s_i}-\mu_\cX(\phi)\right)\,d\mu_\cX
+O\left(e^{-(\delta \beta_{j+1}-r\xi\alpha_j )}\,\|\phi\|_{C^k}^{|I\cap I_h|}\right),\nonumber
\end{align}
which implies
\begin{align*}
&\prod_{h=0}^\ell \left(\int_{\cY} \left(\prod_{i\in I\cap I_h} \psi_{s_i} \right)\,d\mu_\cY\right)\\
=& \left(\int_{\mathcal{Y}}\Phi_0 \, d\mu_\cY \right) \prod_{h=1}^\ell \left(\int_{\cX} \prod_{i\in I\cap I_h} \left(\phi\circ a^{s_i}-\mu_\cX(\phi)\right)\,d\mu_\cX\right)\\
&+O\left(e^{-(\delta \beta_{j+1} - r\xi\alpha_j)} \,\|\phi\|_{C^k}^{r}\right).
\end{align*}
Furthermore, multiplying out the products over $I\cap I_h$, we get
\begin{align}\label{eq:ppp}
&\prod_{h=0}^\ell \left(\int_{\cY} \left(\prod_{i\in I\cap I_h} \psi_{s_i} \right)\,d\mu_\cY\right)\\
=& \left(\int_{\mathcal{Y}}\Phi_0 \, d\mu_\cY \right) 
\sum_{J\subset I\backslash I_0} (-1)^{|I\backslash (I_0\cup J)|}\prod_{h=1}^\ell \left(\int_{\cX} \prod_{i\in I_h\cap J} \phi\circ a^{s_i}\,d\mu_\cX\right)
\mu_\cX(\phi)^{|I\backslash (I_0\cup J)|} \nonumber \\
&+O\left(e^{-(\delta \beta_{j+1} - r\xi\alpha_j)} \,\|\phi\|_{C^k}^{|I|}\right). \nonumber
\end{align}
Comparing \eqref{eq:ppsi} and \eqref{eq:ppp}, we finally conclude that
\begin{align*}
\int_{\cY} \left(\prod_{i\in I} \psi_{s_i}\right)\,d\mu_\cY =& \prod_{h=0}^\ell \left(\int_{\cY} \left(\prod_{i\in I\cap I_h} \psi_{s_i} \right)\,d\mu_\cY\right)\\
&+O\left(e^{-(\delta \beta_{j+1} - r\xi\alpha_j)} \,\|\phi\|_{C^k}^{|I|}\right)
\end{align*}
when $(s_1,\ldots,s_r)\in\Omega_{\cQ}(\alpha_j,\beta_{j+1};N)$.
This establishes \eqref{eq:approx} with an explicit error term.
This estimate implies that for the partition $\cQ'=\{I_0,\ldots, I_\ell\}$,
$$
\hbox{Cum}_{\mu_\cY}^{(r)}(\psi_{s_1},\ldots,\psi_{s_r})=
\hbox{Cum}_{\mu_\cY}^{(r)}(\psi_{s_1},\ldots,\psi_{s_r}|\cQ')+
O\left(e^{-(\delta \beta_{j+1} - r\xi\alpha_j)} \,\|\phi\|_{C^k}^{r}\right)
$$
By Proposition \ref{p:cum_cond},
$$
\hbox{Cum}_{\mu_\cY}^{(r)}(\psi_{s_1},\ldots,\psi_{s_r}|\cQ')=0,
$$
so it follows that for all $(s_1,\ldots, s_r)\in \Omega_Q(\alpha_j,\beta_{j+1};N)$,
\begin{equation}
\label{eq:cum3}
\left|\hbox{Cum}_{\mu_\cY}^{(r)}(\psi_{s_1},\ldots,\psi_{s_r})\right|
\ll e^{-(\delta \beta_{j+1} - r\xi\alpha_j)} \,\|\phi\|_{C^k}^{r}.
\end{equation}

\subsubsection{\textbf{Final estimates on the cumulants.}}

Let us now return to \eqref{eq:cum_sum}. Upon decomposing this sum into the regions discussed
above, and applying the estimates \eqref{eq:cum1}, \eqref{eq:cum2} and \eqref{eq:cum3} to respective
region, we get the bound
\begin{align}
\left|\hbox{Cum}_{\mu_\cY}^{(r)}(F_N)\right|\ll 
&\left( (\beta_{r+1}+1)^r N^{-r/2}+ \left({\max}_j\,\, \alpha_j^{r-1}\right) N^{1-r/2}  \right) \|\phi\|_{C^0}^r \label{eq:cumm_last} \\
&\;\; +
  N^{r/2} \left({\max}_{j}\,\, e^{-(\delta \beta_{j+1} - r\xi\alpha_j)}\right) \,\|\phi\|_{C^k}^{r}.\nonumber
\end{align}
This estimate holds provided that \eqref{eq:inequality} and \eqref{eq:i1} hold, namely when
\begin{equation}
\label{eq:alpha}
\alpha_j=(3+r)\beta_{j}<\beta_{j+1}\quad\hbox{and}\quad \delta\beta_{j+1}-r\xi\alpha_j>0\quad\quad\hbox{ for $j=1,\ldots,r$.}
\end{equation}
Given any $\gamma>0$, we define the parameters $\beta_j$ inductively by the formula
\begin{equation}
\label{eq:beta}
\beta_1=\gamma\quad\hbox{and}\quad \beta_{j+1}=\max\left(\gamma+(3+r)\beta_{j}, \gamma+\delta^{-1}r(3+r)\xi\beta_{j}\right).
\end{equation}
It easily follows by induction that $\beta_{r+1}\ll_r \gamma$,
and we deduce from \eqref{eq:cumm_last} that
$$
\left|\hbox{Cum}_{\mu_\cY}^{(r)}(F_N)\right|\ll 
\left( (\gamma+1)^r N^{-r/2}+\gamma^{r-1}N^{1-r/2}\right) \,\|\phi\|_{C^0}^{r} +
N^{r/2} e^{-\delta \gamma} \,\|\phi\|_{C^k}^{r}.
$$
Taking $\gamma=(r/\delta)\log N$, we conclude that 
when $r\ge 3$,
\begin{equation}
\label{eq:CumNcptzero}
\hbox{Cum}_{\mu_\cY}^{(r)}(F_N)\to 0\quad\hbox{as $N\to\infty$.}
\end{equation}

\subsection{Estimating the variance}\label{sec:var_compact}

In this section, we wish to compute the limit
\begin{align*}
\|F_N\|_{L^2(\cY)}^2 =&\frac{1}{N}\sum_{s_1=0}^{N-1}\sum_{s_2=0}^{N-1} \int_{\cY}
\psi_{s_1}\psi_{s_2}\, d\mu_\cY.
\end{align*}
Setting $s_1=s+t$ and $s_2=t$, we rewrite the above sums as
\begin{align}\label{eq:FF_N}
\|F_N\|_{L^2(\cY)}^2 
=& \Theta_N(0)
+2\sum_{s=1}^{N-1}\Theta_N(s),
\end{align}
where
$$
\Theta_N(s):=\frac{1}{N}\sum_{t=1}^{N-1-s} \int_{\cY}\psi_{s+t}\psi_t\, d\mu_\cY.
$$
Since $\psi_t=\phi\circ a^t-\mu_\cY(\phi\circ a^t)$, 
\begin{equation}
\label{eq:psist}
\int_{\cY}\psi_{s+t}\psi_t\, d\mu_\cY
=\int_{\cY}(\phi\circ a^{s+t})(\phi\circ a^{t})\, d\mu_\cY
-\mu_{\cY}(\phi\circ a^{s+t})
\mu_{\cY}(\phi\circ a^{t}).
\end{equation}
It follows from Corollary \ref{cor:corr} that 
for \emph{fixed} $s$ and as $t\to\infty$,
$$
\int_{\cY}(\phi\circ a^{s+t})(\phi\circ a^{t})\, d\mu_\cY\to
\int_{\cX} (\phi\circ a^{s})\phi\, d\mu_\cX,
\qand
\mu_{\cY}(\phi\circ a^{t})\to \mu_{\cX}(\phi).
$$
We conclude that 
$$
\int_{\cY}\psi_{s+t}\psi_t\, d\mu_\cY
\to \Theta_\infty(s):=\int_{\cX} (\phi\circ a^{s})\phi\, d\mu_\cX-\mu_\cX(\phi)^2,
$$
as $t \ra \infty$, and for fixed $s$,
$$
\Theta_N(s)\to \Theta_\infty(s)\quad \hbox{ as $N\to\infty$.}
$$
If one carelessly interchange limits above, one \emph{expects} that as $N\to \infty$,
\begin{align}\label{eq:limit}
\|F_N\|_{L^2(\cY)}^2 
\to  \Theta_\infty(0)
+2\sum_{s=1}^{\infty}\Theta_\infty(s)
=\sum_{s=-\infty}^{\infty} \Theta_\infty(s).
\end{align}
To prove this limit rigorously, we need to say a bit more to ensure, say, dominated convergence. \\

It follows from Corollary \ref{cor:corr} that 
\begin{align*}
\int_{\cY}(\phi\circ a^{s+t})(\phi\circ a^{t})\, d\mu_\cY&=
\mu_{\cX}(\phi)^2+
O\left(e^{-\delta \min(s,t)}\, \|\phi\|_{C^k}^2\right),\\
\int_{\cY}(\phi\circ a^{s+t})\, d\mu_\cY
& = \mu_{\cX}(\phi)+
O\left(e^{-\delta(s+t)} \, \|\phi\|_{C^k}\right),\\
\int_{\cY}(\phi\circ a^{t})\, d\mu_\cY
& =  \mu_{\cX}(\phi)+
O\left(e^{-\delta t} \, \|\phi\|_{C^k}\right).
\end{align*}
and thus, in combination with \eqref{eq:psist},
\begin{equation}
\label{eq:p1}
\left|\int_{\cY}\psi_{s+t}\psi_t\, d\mu_\cY\right|\ll e^{-\delta \min(s,t)}\, \|\phi\|_{C^k}^2.
\end{equation}
This integral can also be estimated in a different way. If we set $\phi_t=\phi\circ a^{t}$,
then we deduce from Corollary \ref{cor:corr} that
\begin{align*}
\int_{\cY}(\phi\circ a^{s+t})  (\phi\circ a^{t})\, d\mu_\cY
&=
\int_{\cY}(\phi_{t}\circ a^{s}) \phi_{t}\, d\mu_\cY\\
&=
\mu_{\cY}(\phi_t)\mu_{\cX}(\phi_t)+O\left(e^{-\delta s}\, \|\phi_t\|_{C^k}^2\right),
\end{align*}
and 
\begin{align*}
\mu_{\cY}(\phi\circ a^{s+t})=\mu_{\cY}(\phi_t\circ a^{s})
 = \mu_{\cX} (\phi_t)+
O\left(e^{-\delta s}\, \|\phi_t\|_{C^k}\right).
\end{align*}
It is easy to see that for some $\xi=\xi(m,n,k)>0$,
$$
\|\phi_t\|_{C^k}\ll e^{\xi t}\,\|\phi\|_{C^k}\quad \hbox{and} \quad |\mu_\cY(\phi_t)|\le \|\phi\|_{C^k},
$$
and thus
\begin{equation}
\label{eq:p2}
\left|\int_{\cY}\psi_{s+t}\psi_t\, d\mu_\cY\right|\ll e^{-\delta s} e^{\xi t}\, \|\phi\|_{C^k}^2.
\end{equation}
Let us now combine \eqref{eq:p1} and \eqref{eq:p2}: When $t\le \delta/(2\xi)\, s$, we use  \eqref{eq:p2}, and 
when $t\ge \delta/(2\xi)\, s$, we use \eqref{eq:p1}. If we set $\delta'=\min(\delta/2,\delta^2/(2\xi))>0$, then
$$
\left|\int_{\cY}\psi_{s+t}\psi_t\, d\mu_\cY\right|\ll e^{-\delta' s}\, \|\phi\|_{C^k}^2
$$
for all $s\ge 0$, whence
$$
|\Theta_N(s)|\le \frac{1}{N}\sum_{t=1}^{N-1-s} \left|\int_{\cY}\psi_{s+t}\psi_t\, d\mu_\cY\right|\ll e^{-\delta' s}\, \|\phi\|_{C^k}^2
$$
uniformly in $N$. Hence, the Dominated Convergence Theorem applied to \eqref{eq:FF_N} yields \eqref{eq:limit}.

\subsection{Proof of Theorem \ref{th:CLT_compact}}

In this subsection we shall check that the conditions of Proposition \ref{th:CLT} hold for the sequence $(F_N)$
defined in \eqref{eq:FNcpt}. First, by construction, 
It is easy to check that 
$$
\int_\cY F_T\, d\mu_\cY =0,
$$
and by \eqref{eq:limit},
$$
\int_{\cY}F_N^2\, d\mu_\cY\to \sum_{s=-\infty}^{\infty} \Theta_\infty(s)\quad\hbox{as $N\to\infty$}.
$$
Furthermore, by \eqref{eq:CumNcptzero}, 
$$
\cum_{\mu_\cY}^{(r)}(F_N) \to 0\quad \hbox{as $N\to\infty$},
$$
for every $r \geq 3$, which finishes the proof.

\section{Non-divergence estimates for Siegel transforms}\label{sec:non_div}

\subsection{Siegel transforms}

We recall that the space $\cX$ of unimodular lattices in $\bR^{m+n}$ can be identified with the 
quotient space $G/\Gamma$, where $G = \SL_{m+n}(\bR)$ and $\Gamma = \SL_{m+n}(\bZ)$,
which is endowed with the $G$-invariant probability measures $\mu_{\cX}$. We denote by $m_G$
a bi-$G$-invariant Radon measure on $G$. Given a bounded measurable
function $f:\bR^{m+n}\to\bR$ with compact support, we define its \emph{Siegel transform}
$\hat f:\cX\to \bR$ by
$$
\hat f(\Lambda):=\sum_{z\in \Lambda\backslash \{0\}} f(z)\quad \hbox{ for $\Lambda \in \cX$.}
$$
We stress that $\hat{f}$ is unbounded on $\cX$, its growth is controlled by an explicit function $\alpha$
which we now introduce. Given a lattice $\Lambda$ in $\bR^{m+n}$, we say that a subspace $V$ of $\bR^{m+n}$ is 
\emph{$\Lambda$-rational} if the intersection $V\cap \Lambda$ is a lattice in $V$. If $V$ is $\Lambda$-rational,
we denote by $d_\Lambda(V)$ the volume of $V/(V\cap \Lambda)$, and define
$$
\alpha(\Lambda):=\sup\left\{d_\Lambda(V)^{-1}:\, \hbox{$V$ is a $\Lambda$-rational subspace of $\bR^{m+n}$}\right\}.
$$	
It follows from the Mahler Compactness Criterion that $\alpha$ is a proper function on $\cX$.

\begin{proposition}[\cite{sch0}, Lem.~2]
	\label{l:alpha}
	If $f:\bR^{m+n}\to\bR$ is a bounded function with compact support, then
	$$
	|\hat f(\Lambda)|\ll_{\hbox{\rm\tiny supp}(f)} \|f\|_{C^0}\, \alpha(\Lambda)\quad\hbox{ for all $\Lambda\in \cX$}.
	$$
\end{proposition}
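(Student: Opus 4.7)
The plan is to reduce the stated inequality to a counting estimate for non-zero lattice points of $\Lambda$ in a fixed ball, and then to bound this count via Minkowski reduction theory combined with the definition of $\alpha$. Fix $R>0$ so that $\supp(f)\subset B_R$, the Euclidean ball of radius $R$ in $\bR^{m+n}$. Since $\hat f(\Lambda)$ is a sum over $\Lambda\setminus\{0\}$, the triangle inequality gives
\[
|\hat f(\Lambda)|\le \|f\|_{C^0}\cdot N_R(\Lambda),\qquad N_R(\Lambda):=\bigl|(\Lambda\setminus\{0\})\cap B_R\bigr|,
\]
so the proposition reduces to showing that $N_R(\Lambda)\ll_R \alpha(\Lambda)$.

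To handle $N_R(\Lambda)$, I would bring in the successive minima $\lambda_1\le\cdots\le\lambda_{m+n}$ of $\Lambda$ relative to the unit ball, and fix a Minkowski reduced basis $v_1,\ldots,v_{m+n}$ of $\Lambda$ satisfying $\|v_i\|\asymp \lambda_i$, with implicit constants depending only on $m+n$. Let $j$ be the largest index with $\lambda_j\le R$ (taking $j=0$ if $\lambda_1>R$). The crux of the lattice-point count is the assertion that any non-zero $w=\sum_i a_i v_i$ lying in $B_R$ must in fact satisfy $a_i=0$ for all $i>j$. This uses the reduced-basis property that the Gram--Schmidt norms $\|v_i^*\|$ are comparable to $\lambda_i$ up to dimensional constants, so that a non-trivial use of $v_i$ with $i>j$ would force $\|w\|\gg \lambda_{j+1}>R$. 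Once this truncation is secured, the remaining coefficients are constrained by $|a_i|\ll_R \lambda_i^{-1}$ (since projecting $w$ onto $\bR v_i^*$ has norm $\ll R$), which yields
\[
N_R(\Lambda)\;\ll_R\; \prod_{i=1}^{j}\bigl(R/\lambda_i+1\bigr)\;\ll_R\; \prod_{i=1}^{j}\lambda_i^{-1},
\]
where the last step uses $\lambda_i\le R$ for $i\le j$.

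It remains to reinterpret the product on the right in terms of $\alpha(\Lambda)$. Setting $V:=\linspan(v_1,\ldots,v_j)$, the subspace $V$ is $\Lambda$-rational because $v_1,\ldots,v_j\in\Lambda$ are linearly independent, and its covolume inside $V$ satisfies the Hadamard-type bound
\[
d_\Lambda(V)\le \|v_1\|\cdots\|v_j\|\asymp \lambda_1\cdots\lambda_j.
\]
By the defining supremum of $\alpha$, this gives $\alpha(\Lambda)\ge d_\Lambda(V)^{-1}\gg (\lambda_1\cdots\lambda_j)^{-1}$, which combined with the previous display produces the desired inequality $N_R(\Lambda)\ll_R \alpha(\Lambda)$. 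The main obstacle in this plan is the truncation claim $a_i=0$ for $i>j$: this is the one step which is not purely formal, and it is precisely where the structure of a Minkowski reduced basis (beyond the trivial lower bound $\|v_i\|\ge\lambda_i$) is needed. Once Gram--Schmidt comparability $\|v_i^*\|\asymp\lambda_i$ is invoked, the remainder of the argument is routine elementary geometry of numbers.
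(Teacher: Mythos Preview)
The paper does not supply its own proof of this proposition; it simply cites Schmidt \cite{sch0}, Lemma~2. Your argument is the standard reduction-theory proof of that lemma and is correct in outline.

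One small point worth tightening: with $j$ defined literally as the largest index with $\lambda_j\le R$, the truncation claim ``$a_i=0$ for $i>j$'' need not hold on the nose, because the Gram--Schmidt comparability $\|v_i^*\|\asymp\lambda_i$ carries a dimensional constant $c_d$. What the projection argument actually gives is $|a_i|\le R/\|v_i^*\|\ll_d R/\lambda_i$, so $a_i=0$ is forced only once $\lambda_i>c_d^{-1}R$. The fix is cosmetic: either redefine $j$ as the largest index with $\lambda_j\le c_d^{-1}R$, or simply observe that the factors with $R<\lambda_i\le c_d^{-1}R$ contribute a bounded constant to $\prod_{i\le j}(R/\lambda_i+1)$. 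Either way the conclusion $N_R(\Lambda)\ll_R(\lambda_1\cdots\lambda_j)^{-1}\le d_\Lambda(V)^{-1}\le\alpha(\Lambda)$ goes through, with $V=\linspan(v_1,\ldots,v_j)$ and $\Lambda\cap V=\bZ v_1\oplus\cdots\oplus\bZ v_j$ because the $v_i$ form part of a lattice basis. (You might also remark that the case $j=0$ is trivial since then $N_R(\Lambda)=0$ while $\alpha(\Lambda)\ge d_\Lambda(\bR^{m+n})^{-1}=1$.)
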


Using Reduction Theory, it is not hard to derive the following integrability of $\alpha$:

\begin{proposition}[\cite{emm}, Lem. 3.10]\label{p:alpha_int}
$\alpha\in L^p(\cX)$ for $1\le p<m+n$. In particular,
$$
\mu_\cX\left(\{\alpha\ge L\}\right)\ll_p L^{-p}\quad\hbox{for all $p<m+n$.}
$$
\end{proposition}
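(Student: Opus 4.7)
The plan is to establish the tail bound $\mu_\cX(\{\alpha \ge L\}) \ll L^{-(m+n)+\epsilon}$ for every $\epsilon>0$, from which the $L^p$-integrability for $p < m+n$ follows via the layer-cake formula
\[
\int_\cX \alpha^p\, d\mu_\cX = p \int_0^\infty L^{p-1}\,\mu_\cX(\{\alpha \ge L\})\, dL.
\]
The stated weaker tail bound $\mu_\cX(\{\alpha \ge L\}) \ll_p L^{-p}$ for $p < m+n$ then follows automatically by Chebyshev's inequality applied to $\|\alpha\|_{L^p}^p$.

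The starting observation is that $\alpha(\Lambda) \ge L$ forces the existence of some dimension $k \in \{1, \ldots, m+n-1\}$ and a primitive $\Lambda$-rational subspace $V$ of dimension $k$ with $d_\Lambda(V) \le 1/L$; equivalently, the exterior power lattice $\wedge^k \Lambda \subset \wedge^k \bR^{m+n}$ admits a primitive vector of norm $\ll 1/L$. Since $k$ ranges over a finite set, a union bound reduces the problem to estimating, for each fixed $k$, the measure of unimodular lattices admitting such a short primitive $k$-dimensional sublattice.

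To carry out these estimates, I would invoke Minkowski's reduction theory. Writing $\cX \simeq G/\Gamma$ and fixing a Siegel fundamental domain $\Sigma \subset G$, every $g\in \Sigma$ admits an Iwasawa decomposition $g = nak$ with $a = \mathrm{diag}(a_1, \ldots, a_{m+n}) \in A^+$ (say $a_1 \le \cdots \le a_{m+n}$, $\prod_i a_i = 1$) whose entries are comparable to the successive minima of $g\bZ^{m+n}$. In these coordinates one has the comparison
\[
\alpha(g\bZ^{m+n}) \;\asymp\; \max_{1\le k\le m+n-1}\,(a_1 \cdots a_k)^{-1},
\]
while the Haar measure on $G$ takes the form $dg = J(a)\,da\,dn\,dk$ with an explicit Jacobian $J(a)$ built from the positive roots $a_j/a_i$ with $i<j$. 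The measure $\mu_\cX(\{\alpha \ge L\})$ then reduces to a finite sum of integrals of $J(a)$ over the half-spaces $\{a_1 \cdots a_k \le 1/L\}$ in the chamber $A^+$, which are readily evaluated in logarithmic coordinates $t_i := \log a_i$.

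The main obstacle will be pinning down the critical exponent $p = m+n$. The decay rate of $J(a)$ along the chamber is governed by the sum of positive roots of $\SL_{m+n}$, and the volume of each tail region $\{a_1 \cdots a_k \le 1/L\}$ must be balanced carefully against this decay; one must verify that all $m+n-1$ contributions yield the same critical exponent. As a sanity check, for $k=1$ this reduces to the classical Mahler--Siegel estimate $\mu_\cX(\{\lambda_1(\Lambda) \le \epsilon\}) \ll \epsilon^{m+n}$, which also follows directly from Siegel's mean value theorem $\int_\cX \hat{f}\,d\mu_\cX = \int_{\bR^{m+n}} f\,dx$ applied to the indicator of a small Euclidean ball about the origin (the expected number of nonzero lattice vectors in a ball of radius $\epsilon$ is $\asymp \epsilon^{m+n}$). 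For $k > 1$, an analogous Schmidt-type mean value theorem for primitive $k$-sublattices of a random unimodular lattice, combined with Fubini on the Grassmannian parametrization, yields the same scaling $\epsilon^{m+n}$. Summing the $k$-contributions completes the proof.
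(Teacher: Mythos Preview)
The paper does not supply a proof of this proposition; it is quoted from \cite{emm} (Lemma~3.10), with only the remark that it follows from reduction theory. Your sketch via the Siegel domain and the Iwasawa decomposition is precisely the reduction-theoretic argument the paper alludes to, and it is correct: the comparison $\alpha(g\bZ^{m+n}) \asymp \max_{1\le k\le m+n-1}(a_1\cdots a_k)^{-1}$ on a Siegel set, combined with the explicit root-structure Jacobian for Haar measure, gives the claimed tail bound and hence the $L^p$-integrability for $p<m+n$.

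One minor point of presentation: the paper states $L^p$-integrability as the primary assertion and the tail bound as a corollary (via Chebyshev), whereas you go in the opposite direction by first proving the sharper tail estimate $\mu_\cX(\{\alpha\ge L\})\ll_\epsilon L^{-(m+n)+\epsilon}$ and then deducing both conclusions. Either order is fine. Your alternative route for individual $k$ via Siegel/Schmidt mean-value formulas on primitive $k$-sublattices is also valid and arguably cleaner than the direct Siegel-set integration, though the paper (and \cite{emm}) take the reduction-theory route.
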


In what follows, $d\overline{z}$ denotes the volume element on $\bR^{m+n}$ which assigns volume one to the unit cube. In
our arguments below, we will make heavy use of the following two integral formulas:

\begin{proposition}[Siegel Mean Value Theorem; \cite{sie}] \label{p:siegel_mean}
If $f:\bR^{m+n}\to\bR$ is a bounded Riemann integrable function with compact support, then
$$
\int_\cX \hat f\, d\mu_\cX= \int_{\bR^{m+n}} f(\overline{z})\, d\overline{z}.
$$

\end{proposition}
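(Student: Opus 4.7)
The plan is to view both sides of the identity as $G$-invariant positive Radon measures on $\bR^{m+n}\setminus\{0\}$ (where $G = \SL_{m+n}(\bR)$) and appeal to uniqueness, reducing matters to determining a single normalizing constant.

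Set $L(f) := \int_\cX \hat f\, d\mu_\cX$ and $R(f) := \int_{\bR^{m+n}} f(\overline z)\, d\overline z$. Both are positive linear functionals on the space of bounded Riemann-integrable compactly supported functions on $\bR^{m+n}\setminus\{0\}$, and both are $G$-invariant in the sense that $L(f\circ g^{-1}) = L(f)$ for all $g\in G$. For $L$, this follows from the identity $\widehat{f\circ g^{-1}}(\Lambda) = \hat f(g^{-1}\Lambda)$ together with $G$-invariance of $\mu_\cX$; for $R$, this is the change of variables formula, since elements of $\SL_{m+n}(\bR)$ preserve Lebesgue measure. Since $G$ acts transitively on $\bR^{m+n}\setminus\{0\}$ for $m+n\ge 2$, uniqueness of $G$-invariant positive Radon measures on this homogeneous space forces $L = C\cdot R$ for a constant $C\ge 0$.

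To compute $C$, I would unfold $L(f)$ through the $\Gamma$-orbit decomposition of $\bZ^{m+n}\setminus\{0\}$, where $\Gamma = \SL_{m+n}(\bZ)$. Every nonzero integer vector is uniquely $k\,w$ with $k\ge 1$ and $w$ primitive, and $\Gamma$ acts transitively on primitive integer vectors for $m+n\ge 2$. Writing $\Gamma_0 := \Stab_\Gamma(e_1)$ and $H := \Stab_G(e_1)$, and noting that $\Gamma_0$ is a lattice in $H$ by reduction theory, successive unfolding along the tower $G \supset H \supset \Gamma_0$ transforms the left-hand side into
\[
\int_\cX \hat f\, d\mu_\cX \;=\; \mathrm{vol}(H/\Gamma_0)\,\sum_{k=1}^\infty \int_{G/H} f(k g e_1)\, d\nu(gH),
\]
where $\nu$ is the $G$-invariant measure on $G/H$ compatible with the Haar normalizations on $G$ and $H$. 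The diffeomorphism $G/H \cong \bR^{m+n}\setminus\{0\}$ via $gH \mapsto ge_1$ transports $\nu$ to a multiple of Lebesgue measure, and the substitution $z \mapsto kz$ extracts a factor $k^{-(m+n)}$, so the sum over $k$ yields a factor of $\zeta(m+n)$.

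The main obstacle is the final bookkeeping step: verifying that the product of $\mathrm{vol}(H/\Gamma_0)$, the $\nu$-to-Lebesgue conversion constant, and $\zeta(m+n)$ equals exactly $1$ under the convention that $\mu_\cX$ is the invariant \emph{probability} measure on $\cX$. This is precisely the classical Haar covolume computation carried out in Siegel's original paper \cite{sie}, which pins down $C = 1$ and completes the proof.
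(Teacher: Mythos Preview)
The paper does not supply its own proof of this proposition: it is stated with attribution to Siegel \cite{sie} and used as a black box. Your outline is essentially the classical argument (going back to Siegel and Weil) and is correct in substance.

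Two small technical remarks. First, the uniqueness-of-invariant-measure step is cleanest if you restrict to $f\in C_c(\bR^{m+n}\setminus\{0\})$, since the Riesz representation theorem and the uniqueness of $G$-invariant Radon measures on $G/H$ are formulated for continuous compactly supported test functions; once $L=R$ is known on $C_c$, the extension to bounded compactly supported Riemann-integrable $f$ follows by sandwiching between continuous functions. Second, your argument implicitly uses that $L$ is finite on $C_c$, i.e.\ that $\hat f\in L^1(\cX)$ for such $f$; in the paper's framework this is justified by Propositions~\ref{l:alpha} and~\ref{p:alpha_int}, but in a self-contained proof you would need to verify it directly (it is not hard, but not entirely trivial either). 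With those caveats, your plan is sound and matches the standard route to Siegel's theorem.
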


\begin{proposition}[Rogers Formula; \cite{rog}, Theorem~5] \label{p:rogers}
If $F : \mathbb{R}^{m+n}\times\mathbb{R}^{m+n} \ra \bR$ is a non-negative measurable function, then
\begin{align*}
\int_{\cX} \left(\sum_{\overline{z}_1,\overline{z}_2\in (\mathbb{Z}^{m+n})^*} F(g\overline{z}_1,g\overline{z}_2)\right) d\mu_\cX(g\Gamma)=&\zeta(m+n)^{-2}\int_{\mathbb{R}^{m+n}\times\mathbb{R}^{m+n}} F(\overline{z}_1,\overline{z}_2)\,d\overline{z}_1d\overline{z}_2\\
&+\zeta(m+n)^{-1}\int_{\mathbb{R}^{m+n}} F(\overline{z},\overline{z})\,d\overline{z}\\
&+\zeta(m+n)^{-1}\int_{\mathbb{R}^{m+n}} F(\overline{z},-\overline{z})\,d\overline{z},
\end{align*}
where the sum is taken over primitive integral vectors, and $\zeta$ denotes Riemann's $\zeta$-function.
\end{proposition}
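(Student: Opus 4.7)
The plan is to prove Rogers' formula by a $\Gamma$-orbit decomposition of the set of ordered pairs of primitive integral vectors, combined with the unfolding technique that underlies Siegel's Mean Value Theorem (Proposition \ref{p:siegel_mean}). The key observation is that $\Gamma=\SL_{m+n}(\bZ)$ acts on $((\bZ^{m+n})^*)^2$ with precisely three types of orbits: the diagonal $\{(\overline{z},\overline{z}):\overline{z}\in(\bZ^{m+n})^*\}$ and the anti-diagonal $\{(\overline{z},-\overline{z}):\overline{z}\in(\bZ^{m+n})^*\}$, each forming a single orbit because any primitive vector is $\Gamma$-equivalent to $e_1$ and the $\Gamma$-stabilizer of a primitive vector meets its scalar multiples only in $\{\pm 1\}$, together with the remaining orbits, which consist entirely of linearly independent pairs.

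For the two degenerate orbits the double sum collapses to a single sum over primitive vectors, which I would handle using the ``primitive'' version of Siegel's formula,
\[
\int_{\cX}\sum_{\overline{z}\in\Lambda^{*}}g(\overline{z})\,d\mu_{\cX}(\Lambda)=\zeta(m+n)^{-1}\int_{\bR^{m+n}}g(\overline{z})\,d\overline{z}.
\]
This identity is obtained from Proposition \ref{p:siegel_mean} by writing every nonzero lattice vector uniquely as $k\overline{z}$ with $k\ge 1$ and $\overline{z}$ primitive and then M\"obius-inverting the resulting Dirichlet convolution relation $\zeta(m+n)\cdot(\text{primitive average})=\int g\,d\overline{z}$. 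Applied to $g(\overline{z})=F(\overline{z},\overline{z})$ and $g(\overline{z})=F(\overline{z},-\overline{z})$, it produces the two $\zeta(m+n)^{-1}$ terms of the claim.

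For the linearly independent orbits I would proceed by a nested unfolding. Choosing the model pair $(e_1,e_2)$, its $\Gamma$-stabilizer $\Stab_\Gamma(e_1,e_2)$ is an explicit affine subgroup, and unfolding the $\cX$-integral against a $\Gamma$-invariant sum once converts it into a Lebesgue integral on $\bR^{m+n}\times\bR^{m+n}$ restricted to pairs of linearly independent primitive vectors; a double M\"obius inversion in the two scaling parameters removes the primitivity condition on each coordinate independently and introduces the expected factor $\zeta(m+n)^{-2}$. The main obstacle is controlling this M\"obius inversion: the two scalings $(k_1,k_2)\cdot(\overline{z}_1,\overline{z}_2)=(k_1\overline{z}_1,k_2\overline{z}_2)$ fail to preserve linear independence precisely when $k_1\overline{z}_1=\pm k_2\overline{z}_2$, so one has to carefully separate out these diagonal slices and verify that their contribution recombines with the terms produced in the previous paragraph to yield exactly the two $\zeta(m+n)^{-1}$ summands appearing in the statement. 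This simultaneous bookkeeping of the three orbit types, rather than any single orbit analysis, is what pins down all three constants at once.
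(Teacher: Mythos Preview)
The paper does not supply a proof of this proposition; it is quoted verbatim as Theorem~5 of Rogers \cite{rog} and used as a black box. So there is no ``paper's proof'' to compare against.

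Your outline follows the standard architecture of such mean-value formulas (orbit decomposition plus unfolding plus M\"obius inversion), and the treatment of the diagonal and anti-diagonal orbits via the primitive Siegel formula is correct. However, there is a genuine gap in your handling of the linearly independent pairs. The $\Gamma$-action on ordered pairs of linearly independent primitive vectors does \emph{not} have $(e_1,e_2)$ as its unique model: for instance, $(e_1,e_2)$ and $(e_1,e_1+2e_2)$ are both primitive pairs but lie in distinct $\SL_{m+n}(\bZ)$-orbits (any $g$ carrying the first to the second would need its top-left $2\times 2$ minor to have determinant $2$, which cannot be completed to a unimodular integer matrix). In general the orbits are indexed by the pair $(d,a\bmod d)$ with $\gcd(a,d)=1$, where $d$ is the index of $\bZ\overline{z}_1+\bZ\overline{z}_2$ in its saturation; equivalently, by the elementary divisors of the $(m+n)\times 2$ matrix $[\overline{z}_1\,|\,\overline{z}_2]$. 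Each orbit must be unfolded separately, and it is the summation of the resulting volume factors over all these orbits that ultimately produces the constant $\zeta(m+n)^{-2}$. Your ``double M\"obius inversion'' applied to a single unfolding over the $(e_1,e_2)$-orbit cannot by itself yield this constant, because that unfolding already misses all the $d\ge 2$ orbits. A cleaner route is to first establish Rogers' formula for \emph{all} nonzero pairs (where the orbit bookkeeping via Smith normal form is still required but the answer is tidier), and only then M\"obius-invert to pass to primitive pairs.
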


\subsection{Non-divergence estimates}

We retain the notation from Section \ref{sec:correlations}. Given 
$$
0<w_1,\ldots, w_m<n \qand  w_1+\cdots+w_m=n,
$$
we denote by $a$ the self-map on $\cX$ induced by
\begin{equation}
\label{eq_anondiv}
a=\hbox{diag}(e^{w_1},\ldots,e^{w_m},e^{-1},\ldots,e^{-1}).
\end{equation}
Our goal in this subsection is to analyze the escape of mass for the submanifolds $a^s\cY$ and 
bound the Siegel transforms $\hat f(a^s y)$ for $y\in \cY$. The following proposition will play a very
important role in our arguments.

\begin{proposition}\label{p:div1}
There exists $\kappa>0$ such that for every $L\ge 1$ and $s\ge \kappa\log L$,
$$
\mu_\cY\left(\{y\in \cY:\, \alpha(a^sy)\ge L\}\right)\ll_p L^{-p}\quad\hbox{for all $p<m+n$}.
$$
\end{proposition}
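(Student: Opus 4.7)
My plan is to combine the quantitative equidistribution of $a^s\cY$ supplied by Corollary~\ref{cor:corr0} with the Kleinbock--Margulis non-divergence estimate of Theorem~\ref{p:non_div}. Theorem~\ref{p:non_div} alone is not sharp enough, since converting $\alpha(\Lambda)\ge L'$ into a bound on the shortest vector via a Minkowski-type argument on rational sublattices (which gives $\lambda_1(\Lambda)\ll (L')^{-1/(m+n-1)}$) only yields
$$
\mu_\cY(\alpha(a^s y)\ge L')\ll (L')^{-\theta/(m+n-1)},
$$
which falls far short of the desired $L^{-p}$ with $p$ arbitrarily close to $m+n$. Accordingly, Theorem~\ref{p:non_div} will be used only to kill a polynomially deep tail $\{\alpha\ge L^M\}$, and the moderate range $\{L\le \alpha\le L^M\}$ will be handled by equidistribution against a smooth cut-off.

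Concretely, fix $p<m+n$, pick $M=M(p)$ with $M\theta/(m+n-1)\ge p$, and set $L':=L^M$. The first step is to produce a smooth cut-off $\phi_{L,L'}\in C_c^\infty(\cX)$ with
$$
\mathbf{1}_{\{2L\le\alpha\le L'/2\}}\;\le\;\phi_{L,L'}\;\le\;\mathbf{1}_{\{L\le\alpha\le L'\}}
\qand
\|\phi_{L,L'}\|_{C^k}\;\ll\;(L')^C
$$
for some $C=C(m,n,k)$. Applying Corollary~\ref{cor:corr0} with $r=1$ and $\phi_0\equiv 1$ then gives
$$
\int_\cY\phi_{L,L'}(a^s y)\,d\mu_\cY(y)\;=\;\int_\cX\phi_{L,L'}\,d\mu_\cX\;+\;O\!\bigl(e^{-\delta' s}\,\|\phi_{L,L'}\|_{C^k}\bigr),
$$
whose main term is $\ll_p L^{-p}$ by Proposition~\ref{p:alpha_int} and whose error is $\ll e^{-\delta' s}L^{MC}$. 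Splitting
$$
\mu_\cY(\alpha(a^s y)\ge L)\;\le\;\int_\cY\phi_{L,L'}(a^s y)\,d\mu_\cY(y)\;+\;\mu_\cY(\alpha(a^s y)>L')
$$
and invoking Theorem~\ref{p:non_div} on the second summand, the choice $\kappa:=(p+MC)/\delta'$ yields the claimed bound whenever $s\ge\kappa\log L$.

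The main obstacle I foresee is the construction of $\phi_{L,L'}$ with the claimed polynomial $C^k$-bound. Since $\alpha$ is only piecewise smooth and grows exponentially in the cusp height coordinates, a regularization step is needed---for instance convolving $\log\alpha$ against a fixed smooth bump supported in a small identity neighbourhood of $G$ and then composing with a one-variable bump adapted to the window $[\log L,\log L']$. Verifying that the resulting derivative estimates remain polynomial (rather than exponential) in $L'$ reduces to a local calculation in Siegel-domain coordinates, tracking how the invariant vector fields entering $\cN_k$ interact with the height variables; once this local bookkeeping is in place, the rest of the proof is a clean assembly of the mixing rate from Corollary~\ref{cor:corr0} and the non-divergence exponent from Theorem~\ref{p:non_div}.
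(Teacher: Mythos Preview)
Your overall strategy is sound but more elaborate than needed, and there is a small slip in the sandwich inequalities.

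The paper's key observation is to work with the \emph{complement}: the sublevel set $\{\alpha<L\}$ is compact by Mahler's criterion, so one can mollify its indicator by convolution with a \emph{fixed} bump $\rho\in C_c^\infty(G)$, obtaining $\eta_L:=\rho*\chi_{\{\alpha<L\}}$. Since the derivatives fall on $\rho$ rather than on the indicator, one has $\|\eta_L\|_{C^k}\ll\|\rho\|_{C^k}$ \emph{uniformly in $L$}. A short argument with the sub-multiplicativity of $\alpha$ under small $G$-perturbations gives $\eta_L\le\chi_{\{\alpha<cL\}}$, and then Corollary~\ref{cor:corr0} (with $r=1$) yields
\[
\mu_\cY\bigl(\alpha(a^sy)<cL\bigr)\ge\int_\cY\eta_L(a^sy)\,d\mu_\cY(y)=\mu_\cX(\alpha<L)+O(e^{-\delta s}),
\]
so $\mu_\cY(\alpha(a^sy)\ge cL)\ll_p L^{-p}+e^{-\delta s}$ by Proposition~\ref{p:alpha_int}, and the choice $\kappa=p/\delta$ finishes. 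No tail truncation and no appeal to Theorem~\ref{p:non_div} are needed.

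Your route introduces a tail cut at $L'=L^M$ and handles $\{\alpha>L'\}$ via Theorem~\ref{p:non_div}. This would work, but the concern you flag as the ``main obstacle'' is illusory: the same convolution-on-$G$ trick (this is exactly Lemma~\ref{eq:eta}) gives a cut-off with $C^k$-norm $O(1)$, not merely polynomial in $L'$. Once that is in hand, the tail truncation is redundant---you may as well take $L'=\infty$, i.e.\ use $1-\eta_L$, and the argument collapses to the paper's. The Siegel-domain bookkeeping you anticipate is therefore unnecessary.

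Finally, a minor correction: with the sandwich $\mathbf{1}_{\{2L\le\alpha\le L'/2\}}\le\phi_{L,L'}\le\mathbf{1}_{\{L\le\alpha\le L'\}}$ your splitting inequality actually goes the wrong way (one only obtains $\mu_\cY(\alpha\ge L)\ge\int\phi_{L,L'}\,d\mu_\cY+\mu_\cY(\alpha>L')$). For an upper bound you need $\phi_{L,L'}$ to \emph{majorize} $\mathbf{1}_{\{L\le\alpha\le L'\}}$, i.e.\ reverse the roles of the inner and outer annuli.
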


\begin{proof}
Let $\chi_L$ be the characteristic function of the subset $\{\alpha< L\}$
of $\cX$. By Mahler's Compactness Criterion, $\chi_L$ has a compact support.
We further pick a non-negative $\rho\in C_c^\infty(G)$ with $\int_G \rho\, dm_G=1$.
Let 
$$
\eta_L(x):=(\rho*\chi_L)(x)=\int_G \rho(g) \chi_L(g^{-1}x)\, dm_G(g), \quad x\in \cX.
$$
Since $\mu_\cX$ is $G$-invariant, 
$$
\int_{\cX} \eta_{L}\, d\mu_\cX=\int_{\cX} \chi_{L}\, d\mu_\cX=\mu_\cX(\{\alpha<L\}).
$$
It follows from invariance of $m_G$ that if $\cD_Z$ is a differential operator as defined as in \eqref{eq:diff}, then $\cD_Z\eta_L=(\cD_Z\rho)*\chi_L$. Hence,
$\eta_L\in C^\infty_c(\cX)$, and $\|\eta_L\|_{C^k}\ll \|\rho\|_{C^k}$.  \\

Note that there exists $c>1$ such that for every $g\in \hbox{supp}(\rho)$
and all $x\in \cX$, we have
$\alpha(g^{-1}x)\ge c^{-1}\,\alpha(x)$, and thus $\{\alpha\circ g^{-1}<L\}\subset \{\alpha<c L\}$ and 
$\eta_{L}\le \chi_{cL}$. 
This implies the lower bound
$$
\mu_\cY\left(\{y\in \cY:\, \alpha(a^sy)< cL\}\right)
=\int_\cY \chi_{cL}(a^sy)\, d\mu_\cY(y)\ge \int_\cY \eta_{L}(a^sy)\, d\mu_\cY(y).
$$
By Corollary \ref{cor:corr}, there exists $\delta > 0$ and $k \geq 1$ such that
\begin{align*}
\int_\cY \eta_{L}(a^sy)\, d\mu_\cY(y) &=
\int_{\cX} \eta_{L}\, d\mu_\cX+O\left( e^{-\delta s}\, \|\eta_{L}\|_{C^k} \right)\\
&=\mu_\cX\left(\{\alpha<L\}\right)+O\left( e^{-\delta s}\right),
\end{align*}
and by Proposition \ref{p:alpha_int},
$$
\mu_\cX\left(\{\alpha\ge L \}\right)\ll_p L^{-p} \quad\hbox{for all $p<m+n$}.
$$
Combining these bounds, we get
$$
\mu_\cY(\{y\in \cY:\, \alpha(a^sy)< cL\})\ge 
\mu_\cX(\{\alpha< L \})+O\left( e^{-\delta s}\right)
=1+O_{p}\left(L^{-p} +e^{-\delta s}\right),
$$
and thus
$$
\mu_\cY\left(\{y\in \cY:\, \alpha(a^sy)\ge c L\}\right)\ll_p L^{-p} +e^{-\delta s}.
$$
By taking $s \geq \kappa \log L$ where $\kappa = \frac{p}{\delta}$, the proof is finished.
\end{proof}

\begin{proposition}\label{prop:sup}
Let $f$ be a bounded measurable function on $\mathbb{R}^{m+n}$ 
with compact support contained in the open set $\{(x_{m+1},\ldots, x_{m+n})\ne 0\}$.
Then, with $a$ as in \eqref{eq_anondiv},
$$
\sup_{s\ge 0} \int_{\cY} |\hat f\circ a^s|\, d\mu_\cY<\infty.
$$
\end{proposition}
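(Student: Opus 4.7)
The plan is to unfold the Siegel transform against the torus parameterization of $\cY$ and reduce to a Riemann-sum estimate on $\bR^n$. Replacing $f$ by $|f|$ we may assume $f\ge 0$. Parameterising $y = \Lambda_u$ by $u\in M_{m,n}([0,1])\cong[0,1]^{mn}$ via \eqref{eq:lll}, the non-zero vectors of $\Lambda_u$ have the form $(\overline p + u\overline q,\overline q)$ with $(\overline p,\overline q)\in\bZ^m\times\bZ^n\setminus\{0\}$, so I would write
\[
\int_\cY \hat f\circ a^s\, d\mu_\cY = \int_{[0,1]^{mn}} \sum_{(\overline p,\overline q)\ne 0} f\bigl(a^s(\overline p + u\overline q,\overline q)\bigr)\, du.
\]
Since $\supp(f)\subset\{(x_{m+1},\ldots,x_{m+n})\ne 0\}$ and $a^s$ multiplies the last $n$ coordinates by $e^{-s}$, only terms with $\overline q\ne 0$ survive.

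The key step will be the following measure-preserving identity: for every $\overline q\in\bZ^n\setminus\{0\}$ and non-negative measurable $h\colon\bR^m\to\bR$,
\[
\sum_{\overline p\in\bZ^m}\int_{[0,1]^{mn}} h(\overline p+u\overline q)\, du = \int_{\bR^m} h(\overline x)\, d\overline x.
\]
I would prove this by reducing to the one-dimensional fact that $v\mapsto v\cdot\overline q \bmod 1$ pushes Lebesgue measure on $[0,1]^n$ forward to Lebesgue on $[0,1]$: picking $j$ with $q_j\ne 0$, the map $v_j\mapsto q_j v_j + c\bmod 1$ (with the other coordinates fixed) wraps $[0,1]$ onto itself exactly $|q_j|$ times, and then integrating coordinate by coordinate for $i = 1,\ldots,m$ yields the identity.

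Applying this identity with $h(\overline x) = f(a^s(\overline x,\overline q))$ for fixed $\overline q\ne 0$ and changing variables $y_i = e^{sw_i}x_i$ (with Jacobian $\prod_i e^{sw_i} = e^{sn}$, using $w_1+\cdots+w_m = n$), I obtain
\[
\sum_{\overline p}\int_{[0,1]^{mn}} f\bigl(a^s(\overline p + u\overline q,\overline q)\bigr)\, du = e^{-sn}\, g(e^{-s}\overline q),\qquad g(\overline v):=\int_{\bR^m} f(\overline y,\overline v)\, d\overline y.
\]
The auxiliary function $g$ is bounded with compact support contained in some annulus $\{c\le\|\overline v\|\le C\}$. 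Summing over $\overline q\ne 0$ then yields the Riemann-sum bound
\[
\int_\cY |\hat f\circ a^s|\, d\mu_\cY \le e^{-sn}\sum_{\overline q\in\bZ^n\setminus\{0\}} g(e^{-s}\overline q),
\]
which is bounded uniformly in $s\ge 0$, since only $\overline q$ with $ce^s\le\|\overline q\|\le Ce^s$ contribute, there are $O(e^{sn})$ such lattice points, and each contributes at most $\|g\|_\infty$.

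The only genuinely non-trivial ingredient will be the measure-preserving identity, which hinges on $\overline q$ being a \emph{non-zero integer} vector. The balance condition $w_1+\cdots+w_m = n$ is essential to the argument: it is precisely this relation that cancels the Jacobian factor $e^{-sn}$ against the lattice count $e^{sn}$ and produces a bound independent of $s$.
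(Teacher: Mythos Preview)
Your proof is correct and follows the same unfolding strategy as the paper: parameterise $\cY$, discard $\overline q=0$ using the support hypothesis, and bound the resulting sum over $\overline q$ in the annulus $\{ce^s\le\|\overline q\|\le Ce^s\}$. The one difference is in how the $u$-integral is handled. The paper first dominates $f$ by the indicator of a box of the shape $\{\upsilon_1\le\|\overline y\|\le\upsilon_2,\ |x_i|\le\vartheta\|\overline y\|^{-w_i}\}$ and then crudely bounds the slice volume $\int_{[0,1]^n}\chi^{(i)}_{\overline q}(p+\langle\overline u,\overline q\rangle)\,d\overline u\ll\|\overline q\|^{-1-w_i}$ together with a count of contributing $p$'s. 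You instead compute the $u$-integral \emph{exactly} via the measure-preserving identity $\sum_{\overline p}\int_{[0,1]^{mn}}h(\overline p+u\overline q)\,du=\int_{\bR^m}h$, which the paper itself invokes later (in the proof of Lemma~\ref{l:sum_mu}, equation~\eqref{eq:torus}) but not here. Your route avoids the reduction to a special indicator function and yields the clean exact formula $\int_\cY\hat f\circ a^s\,d\mu_\cY=e^{-sn}\sum_{\overline q\ne 0}g(e^{-s}\overline q)$ as a bonus; the paper's route is more hands-on but gives the same $O(1)$ bound with no extra input.
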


\begin{proof}
We note that there exist $0<\upsilon_1<\upsilon_2$ and $\vartheta>0$ such that the support of $f$ is contained in the set
\begin{equation}
\label{eq:set}
\left\{(\overline{x},\overline{y})\in\bR^{m+n}:\; \upsilon_1\le \|\overline{y}\|\le \upsilon_2,\,\; |x_i|\le \vartheta\, \|\overline{y}\|^{-w_i},\,\; i=1,\ldots,n\right\},
\end{equation}
and without loss of generality we may assume that $f$ is the characteristic function of this set. 
We recall that $\cY$ can be identified with the collection of lattices 
$$
\left\{\Lambda_u:\; u=\left(u_{ij}:i=1,\ldots m, j=1,\ldots,n\right)\in [0,1)^{m\times n}\right\}.
$$
We set $\overline{u}_i=(u_{i1},\ldots,u_{in})$.
Then by the definition of the Siegel transform,
\begin{align*}
\hat f(a^s\Lambda_u)
=\sum_{(\overline{p},\overline{q})\in \mathbb{Z}^{m+n}\backslash \{0\}}
f\left(e^{w_1 s}(p_1+\left<\overline{u}_1,\overline{q}\right>),\ldots, e^{w_m s}(p_m+\left<\overline{u}_m,\overline{q}\right>), e^{-s}\overline{q} \right).
\end{align*}
Denoting by $\chi^{(i)}_{\overline{q}}$ the characteristic function of the interval $\left[-\vartheta\, \|\overline{q}\|^{-w_i},\vartheta\, \|\overline{q}\|^{-w_i}\right]$,
we rewrite this sum as
\begin{align}\label{eq:formula}
\hat f(a^s\Lambda_u)&=\sum_{\upsilon_1 e^{s}\le \|\overline{q}\|\le \upsilon_2 e^s} \sum_{\overline{p}\in \mathbb{Z}^m}  \prod_{i=1}^m  \chi^{(i)}_{\overline{q}}(p_i+\left<\overline{u}_i,\overline{q}\right>) \\
&=\sum_{\upsilon_1 e^{s}\le \|\overline{q}\|\le \upsilon_2 e^s} \prod_{i=1}^m \left( \sum_{p_i\in \mathbb{Z}} \chi^{(i)}_{\overline{q}}(p_i+\left<\overline{u}_i,\overline{q}\right>) \right).\nonumber
\end{align}
Hence, 
\begin{align}\label{eq:integral}
\int_{\cY} (\hat f\circ a^s)\, d\mu_\cY=\sum_{\upsilon_1 e^{s}\le \|\overline{q}\|\le \upsilon_2 e^s} \prod_{i=1}^m \left( \sum_{p_i\in \mathbb{Z}} \int_{[0,1]^n} \chi^{(i)}_{\overline{q}}(p_i+\left<\overline{u}_i,\overline{q}\right>) d\overline{u}_i\right).\nonumber
\end{align}
We observe that for each $i$ and $p_i \in \bZ$, the volume of the set
$$
\left\{\overline{u}\in [0,1]^n:\, |p+\left<\overline{u},\overline{q}\right>|\le \vartheta\, \|\overline{q}\|^{-w_i}\right\}
$$
is estimated from above by
$$
 \frac{2\vartheta\,\|\overline{q}\|^{-w_i}}{\max_j |q_j|}\ll \|\overline{q}\|^{-1-w_i},
$$
and we note that the set is empty whenever $|p|> \sum_j |q_j|+\vartheta\, \|\overline{q}\|^{-w_i }$. In particular, it
is non-empty for at most $O(\|\overline{q}\|)$ choices of $p\in\bZ$. Hence, we deduce that
$$
\int_{\cY} (\hat f\circ a^s)\, d\mu_\cY\ll \sum_{\upsilon_1 e^{s}\le \|\overline{q}\|\le \upsilon_2 e^s} \prod_{i=1}^m \|\overline{q}\|^{-w_i}=\sum_{\upsilon_1 e^{s}\le \|\overline{q}\|\le \upsilon_2 e^s} \|\overline{q}\|^{-n}\ll 1,
$$
uniformly in $s$. This completes the proof.
\end{proof}

\begin{proposition}\label{prop:sup2}
	Let $f$ be a bounded measurable function on $\mathbb{R}^{m+n}$ 
	with compact support contained in the open set $\{(x_{m+1},\ldots,x_{m+n})\ne 0\}$.
	Then 
	$$
	\sup_{s\ge 0} (1+s)^{-\nu_m}\left\|\hat f\circ a^s\right\|_{L^2(\cY)}<\infty,
	$$
	where $\nu_1=1$ and $\nu_m=0$ when $m\ge 2$.
\end{proposition}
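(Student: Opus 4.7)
The plan is to compute $\|\hat f\circ a^s\|_{L^2(\cY)}^2$ directly from the explicit formula \eqref{eq:formula} obtained in the proof of Proposition \ref{prop:sup}. As before, I reduce to the case where $f$ is the characteristic function of the set \eqref{eq:set}. Since the rows $\overline{u}_i$ of $u$ are independent Lebesgue samples from $[0,1]^n$ under $\mu_\cY$, squaring \eqref{eq:formula} and integrating factors over $i$:
\[
\|\hat f\circ a^s\|_{L^2(\cY)}^2 = \sum_{\overline{q},\overline{q}'\in\bZ^n,\;\|\overline{q}\|,\|\overline{q}'\|\asymp e^s}\prod_{i=1}^m J_i(\overline{q},\overline{q}'),
\]
where $J_i(\overline{q},\overline{q}')$ is the measure of the set of $\overline{u}_i \in [0,1]^n$ satisfying both $\|\langle\overline{u}_i,\overline{q}\rangle\|_{\bR/\bZ} \le \vartheta\|\overline{q}\|^{-w_i}$ and $\|\langle\overline{u}_i,\overline{q}'\rangle\|_{\bR/\bZ} \le \vartheta\|\overline{q}'\|^{-w_i}$. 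The key is to split this double sum according to whether $\overline{q},\overline{q}'$ are $\bQ$-linearly independent (``independent'') or proportional (``resonant'').

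For independent pairs, the homomorphism $\overline{u}_i\mapsto(\langle\overline{u}_i,\overline{q}\rangle,\langle\overline{u}_i,\overline{q}'\rangle)\bmod\bZ^2$ from $\bT^n$ to $\bT^2$ has injective dual $(a,b)\mapsto a\overline{q}+b\overline{q}'$, hence is surjective and pushes Haar to Haar. This gives $J_i(\overline{q},\overline{q}')=4\vartheta^2\|\overline{q}\|^{-w_i}\|\overline{q}'\|^{-w_i}$, and the relation $\sum_i w_i=n$ together with $\sum_{\|\overline{q}\|\asymp e^s}\|\overline{q}\|^{-n}\ll 1$ shows that the contribution of independent pairs is $O(1)$, uniformly in $s$. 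For resonant pairs I write $\overline{q}=q_0\overline{r}$, $\overline{q}'=\pm q_0'\overline{r}$ with $\overline{r}\in\bZ^n$ primitive. Since $\overline{r}$ is primitive, $\overline{u}_i\mapsto\langle\overline{u}_i,\overline{r}\rangle\bmod 1$ pushes Lebesgue to Lebesgue on $[0,1)$, reducing $J_i$ to the one-dimensional measure $|\{y\in[0,1):\|q_0y\|_{\bR/\bZ}\le\epsilon_1,\;\|q_0'y\|_{\bR/\bZ}\le\epsilon_2\}|$. Counting the intersection of the two unions of short intervals, organized by $d=\gcd(q_0,q_0')$, yields $J_i\ll d\|\overline{r}\|e^{-s(w_i+1)}$ in the range $q_0,q_0'\asymp e^s/\|\overline{r}\|$, and taking products gives $\prod_i J_i\ll d^m\|\overline{r}\|^m e^{-s(n+m)}$.

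Summing over resonant pairs: for each primitive $\overline{r}$ and each $d$, there are $O((e^s/(d\|\overline{r}\|))^2)$ admissible coprime pairs $(q_0/d,q_0'/d)$. The resonant contribution is therefore bounded by
\[
e^{-s(n+m-2)}\sum_{\overline{r}\text{ prim},\,\|\overline{r}\|\le e^s}\|\overline{r}\|^{m-2}\sum_{d\le e^s/\|\overline{r}\|}d^{m-2}.
\]
For $m\ge 2$, the inner sum is $\ll(e^s/\|\overline{r}\|)^{m-1}$, after which the primitive-vector sum yields $\ll e^{s(n-1)}$, giving $O(1)$ and hence $\nu_m=0$. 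For $m=1$ the $d$-sum is harmonic; when $n\ge 2$ the $\overline{r}$-sum absorbs the logarithmic factor and the result is still $O(1)$, while for $m=n=1$ one obtains $\|\hat f\circ a^s\|_{L^2}^2\ll s$, which is well within the $O((1+s)^2)$ permitted by $\nu_1=1$.

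The principal obstacle is the resonant case. The sharp one-dimensional estimate $J_i\ll d\|\overline{r}\|e^{-s(w_i+1)}$ must be tight enough that the product over $i$, combined with $\sum_i w_i=n$, produces the decisive cancelling weight $e^{-s(n+m)}$; anything weaker would fail to converge already at $n=m=1$. The subsequent combinatorial step is then a matter of balancing the number of coprime pairs in a dyadic window against harmonic averages of $d$ and $\|\overline{r}\|$, and it is precisely this balance that is responsible for the shape of $\nu_m$ in the statement.
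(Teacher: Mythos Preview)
Your overall strategy matches the paper's: expand the $L^2$-norm as a double sum over $\overline{q},\overline{q}'$, split into independent and proportional pairs, handle the independent case by a torus-surjectivity argument, and in the proportional case reduce to a one-dimensional overlap problem organized by a gcd. Your parametrization $\overline{q}=q_0\overline{r}$ with $\overline{r}$ primitive is in fact a little cleaner than the paper's choice of a maximal coordinate, but the substance is the same.

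There is, however, a genuine gap in the resonant step. The asserted bound $J_i\ll d\,\|\overline{r}\|\,e^{-s(w_i+1)}$ is not correct when $w_i<1$ (which occurs for generic weights as soon as $m\ge 2$). For coprime $q_0,q_0'$ (so $d=1$) with $\|\overline{r}\|\asymp 1$, the two arithmetic conditions $\|q_0y\|\le\epsilon_1$ and $\|q_0'y\|\le\epsilon_2$ on $y\in[0,1)$ become effectively independent, and the intersection has measure $\asymp\epsilon_1\epsilon_2\asymp e^{-2sw_i}$, which \emph{dominates} $e^{-s(1+w_i)}$ whenever $w_i<1$. The correct one-dimensional estimate is
\[
J_i\;\ll\; d\,\|\overline{r}\|\,e^{-s(1+w_i)}\;+\;\|\overline{q}\|^{-w_i}\|\overline{q}'\|^{-w_i},
\]
the second term corresponding exactly to the ``near-miss'' overlaps (the paper's $J_i^{(1)}$). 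Once you include it, the product $\prod_{i=1}^m J_i$ expands into a sum over subsets $S\subset\{1,\ldots,m\}$, and one must check that every term is $O(1)$; you have only treated $S=\{1,\ldots,m\}$. The remaining terms are not difficult---each carries an extra factor $e^{-s\,w(S^c)}$ that provides the needed decay---but they do require verification, and this is precisely the expansion \eqref{eq:long} carried out in the paper. With this correction your argument is complete and essentially coincides with the paper's.
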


\begin{proof}
As in the proof of Proposition \ref{prop:sup}, it is sufficient to consider the case when $f$ is the characteristic function of the set \eqref{eq:set}.
Then $\hat f(a^sy)$ is given by \eqref{eq:formula}, and we get
\begin{align*}
\left\|\hat f\circ a^s\right\|_{L^2(\cY)}^2
&=\int_{\cY} \hat f( a^s y)\hat f( a^s y)\, d\mu_\cY(y)\\
&=\sum_{\upsilon_1\, e^{s}\le \|\overline{q}\|,\|\overline{\ell}\|\le \upsilon_2\, e^s} \prod_{i=1}^m
\left( \sum_{p_i,r_i\in\mathbb{Z}}
\int_{[0,1]^n}\chi^{(i)}_{\overline{q}}\left(p_i+\left<\overline{u}_i,\overline{q}\right>\right)\chi^{(i)}_{\overline{\ell}}\left(r_i+\left<\overline{u}_i,\overline{\ell}\right>\right)\, d\overline{u}_i\right).
\end{align*}
For fixed $\overline{q}, \overline{l} \in \bZ^n$, we wish to estimate
$$
I_i(\overline{q},\overline{\ell}):=
\sum_{p,r\in\mathbb{Z}}
\int_{[0,1]^n}\chi^{(i)}_{\overline{q}}\left(p+\left<\overline{u},\overline{q}\right>\right)\chi^{(i)}_{\overline{\ell}}\left(r+\left<\overline{u},\overline{\ell}\right>\right)\, d\overline{u}.
$$
First, we consider the case when $\overline{q}$ and $\overline{\ell}$
are linearly independent. Then there exist indices $j,k=1,\ldots,n$ such that 
$q_j\ell_k-q_k\ell_j\ne 0.$ Let us consider the function $\psi$ on $\bR^2$ defined by
$\psi(x_1,x_2)=\chi^{(i)}_{\overline{q}}(x_1)\chi^{(i)}_{\overline{\ell}}(x_2)$
as well as the periodized function $\bar\psi$ on $\bR^2/\bZ^2$ defined by 
$\bar\psi(x)=\sum_{z\in \bZ^2} \psi(z+x)$.
If we set
\[
\omega:=\sum_{\zeta\ne j,k} q_\zeta u_\zeta \qand \rho:=\sum_{\zeta\ne j,k} \ell_\zeta u_\zeta,
\] 
then we denote by $S$ the affine map 
$$
S:(x_1,x_2)\mapsto (q_jx_1 +q_kx_2+\omega, \ell_j x_1+\ell_k x_2+\rho),
$$
which induces an affine endomorphism of the torus $\bR^2/\bZ^2$. We note that
$$
\sum_{p,r\in\bZ}\int_{[0,1]^2}\chi^{(i)}_{\overline{q}}\left(p+\left<\overline{u},\overline{q}\right>\right)\chi^{(i)}_{\overline{\ell}}\left(r+\left<\overline{u},\overline{\ell}\right>\right)\, du_jdu_k=\int_{\bR^2/\bZ^2} \bar{\psi}(Sx)\, d\mu(x),
$$
where $\mu$ denotes the Lebesgue probability measure on the torus $\bR^2/\bZ^2$.
Since the endomorphism $S$ preserves $\mu$, we see that
$$
\int_{\bR^2/\bZ^2} \bar{\psi}(Sx)\, d\mu(x)=\int_{\bR^2/\bZ^2} \bar{\psi}(x)\, d\mu(x)
=\int_{\bR^2} \psi(x)\, dx=4\vartheta^2\, \|\overline{q}\|^{-w_i} \|\overline{\ell}\|^{-w_i}.
$$
Therefore, we conclude that in this case,
\begin{equation}
\label{eq:II1}
I_i(\overline{q},\overline{\ell})\ll \|\overline{q}\|^{-w_i} \|\overline{\ell}\|^{-w_i}.
\end{equation}

Let us now we consider the second case when $\overline{q}$ and $\overline{\ell}$ are linearly dependent. Upon re-arranging
indices if needed, we may assume that 
\begin{equation}\label{eq:q}
|q_1|=\max(|q_1|,\ldots,|q_n|,|\ell_1|,\ldots, |\ell_n|).
\end{equation}
In particular, $q_1\ne 0$, and thus $\ell_1 \neq 0$, since $\overline{q}$ and $\overline{\ell}$ are linearly dependent,
so we can define the new variables 
$$
v_1=\sum_{j=1}^n (q_j/q_1)u_j = \sum_{j=1}^n (\ell_j/\ell_1)u_j \qand v_2=u_2,\ldots, v_n=u_n,
$$
and thus
$$
I_i(\overline{q},\overline{\ell})\le J_i(\overline{q},\overline{\ell})
$$
where
$$
J_i(\overline{q},\overline{\ell}):=
\sum_{p,r\in\mathbb{Z}}
\int_{-n}^n\chi^{(i)}_{\overline{q}}\left(p+q_1 v_1\right)\,\chi^{(i)}_{\overline{\ell}}\left(r+\ell_1v_1\right)\, dv_1.
$$
We note that the last integral is non-zero only when $|p|\ll |q_1|$ and $|r|\ll |\ell_1|$.
We set $q_1=q'd$ and $\ell_1=\ell'd$ where $d=\gcd(q_1,\ell_1)$.
Then $q_1r-\ell_1 p=j d$ for some $j\in \mathbb{Z}$.
We observe that when $j$ is fixed, then the integers $p$ and $r$ satisfy the 
equation $q'r-\ell'p=j$. Since $\gcd(q',\ell')=1$,
all the solutions of this equation are of the form $p=p_0+kq'$, $r=r_0+k\ell'$
for $k\in \mathbb{Z}$. In particular, it follows that the number of such solutions
satisfying $|p|\ll |q_1|$ and $|r|\ll |\ell_1|$ is at most $O(d)$.
We write 
$$
J_i(\overline{q},\overline{\ell})=J^{(1)}_i(\overline{q},\overline{\ell})+J^{(2)}_i(\overline{q},\overline{\ell}),
$$
where the first sum is taken over those $p,r$ with $q_1r-\ell_1 p\ne 0$,
and the second sum is taken over those $p,r$ with $q_1r-\ell_1 p=0$.\\

Upon applying a linear change of variables, we obtain 
\begin{align*}
J^{(1)}_i(\overline{q},\overline{\ell})
&=\sum_{p,r:\, q_1r-\ell_1 p\ne 0}
\int_{-n+p/q_1}^{n+p/q_1}\chi^{(i)}_{\overline{q}}(q_1v_1)\,\chi^{(i)}_{\overline{\ell}}\left((q_1r-\ell_1 p)/q_1+\ell_1v_1\right)\, dv_1\\
&\ll d \sum_{j\in \mathbb{Z}\backslash\{0\}} 
\int_{-\infty}^{\infty}\chi^{(i)}_{\overline{q}}(q_1 v_1)\,\chi^{(i)}_{\overline{\ell}}\left(jd/q_1+\ell_1 v_1\right)\, dv_1.
\end{align*}
Let us consider the function
$$
\rho_i(x):=  \int_{-\infty}^{\infty} \chi^{(i)}_{\overline{q}}(q_1 v_1)\,\chi^{(i)}_{\overline{\ell}}\left(xd/q_1+\ell_1 v_1\right)\, dv_1.
$$
We note that the integrand equals the indicator function of the intersection of the intervals
$$
\left[-\vartheta\,|q_1|^{-1}\|\overline{q}\|^{-w_i},\vartheta\, |q_1|^{-1}\|\overline{q}\|^{-w_i}\right]
$$
and 
$$
\left[-xd/(q_1\ell_1)-\vartheta\,|\ell_1|^{-1}\|\overline{\ell}\|^{-w_i},-xd/(q_1\ell_1)+\vartheta\,|\ell_1|^{-1}\|\overline{\ell}\|^{-w_i}\right],
$$
and thus it follows that $\rho_i$ is non-increasing when $x\ge 0$, and non-decreasing when $x\le 0$.
This implies that 
$$
\sum_{j\in \mathbb{Z}\backslash \{0\}} \rho_i(j)
\le \int_{-\infty}^{\infty} \rho_i(x)\, dx.
$$
Since
$$
\int_{-\infty}^{\infty} \rho_i(x)\, dx=
\left(\int_{-\infty}^{\infty} \chi^{(i)}_{\overline{q}}(q_1 v_1)\,dv_1\right) \left(\int_{-\infty}^{\infty}\chi^{(i)}_{\overline{\ell}}\left(xd/q_1\right)\, dx\right)
\ll d^{-1} \|\overline{q}\|^{-w_i} \|\overline{\ell}\|^{-w_i},
$$
we conclude that
\begin{align*}
J_i^{(1)}(\overline{q},\overline{\ell})\ll d \sum_{j\in \mathbb{Z}\backslash \{0\}} \rho_i(j)
\ll \|\overline{q}\|^{-w_i}\|\overline{\ell}\|^{-w_i}.
\end{align*}
Next, we proceed with estimation of $J^{(2)}_i(\overline{q},\overline{\ell})$.
Let $c_0:=\min\{\|\overline{q}\|: \overline{q} \in \bZ^{n}\backslash \{0\}\}$.
Denoting by $N(q_1,\ell_1)$ the number of solutions $(p,r)$ of the equation
$$
q_1r-\ell_1 p= 0\quad\hbox{ with $|p|\le (c_0^{-n}\vartheta+n)|q_1|$,} 
$$
we also obtain 
\begin{align*}
	J^{(2)}_i(\overline{q},\overline{\ell})
	&=\sum_{p,r:\, q_1r-\ell_1 p= 0}
	\int_{-n+p/q_1}^{n+p/q_1}\chi^{(i)}_{\overline{q}}(q_1v_1)\,\chi^{(i)}_{\overline{\ell}}\left(\ell_1v_1\right)\, dv_1\\
	&\le N(q_1,\ell_1) \int_{-\infty}^{\infty}\chi^{(i)}_{\overline{q}}(q_1v_1)\,\chi^{(i)}_{\overline{\ell}}\left(\ell_1v_1\right)\, dv_1\\
	&\ll N(q_1,\ell_1)|q_1|^{-1}\|\overline{q}\|^{-w_i}\ll N(q_1,\ell_1)\max\left(\|\overline{q}\|,\|\overline{\ell}\|\right)^{-(1+w_i)},
\end{align*}
where we used that $q_1$ is chosen according to \eqref{eq:q}.
Combining the obtained estimates for $J_i^{(1)}(\overline{q},\overline{\ell})$
and $J^{(2)}_i(\overline{q},\overline{\ell})$, we conclude that when 
$\overline{q}$ and $\overline{\ell}$ are 
linearly dependent,
\begin{equation}
\label{eq:II2}
J_i(\overline{q},\overline{\ell})\ll \|\overline{q}\|^{-w_i}\|\overline{\ell}\|^{-w_i}+
N(q_1,\ell_1)\max\left(\|\overline{q}\|,\|\overline{\ell}\|\right)^{-(1+w_i)},
\end{equation}
where $q_1$ is chosen according to \eqref{eq:q}.\\

Now we proceed to estimate
\begin{align}\label{eq:ssum}
\left\|\hat f\circ a^s\right\|_{L^2(\cY)}^2&\ll \sum_{\upsilon_1\, e^{s}\le \|\overline{q}\|,\|\overline{\ell}\|\le \upsilon_2\, e^s}
\prod_{i=1}^m I_i(\overline{q},\overline{\ell}).
\end{align}
Using \eqref{eq:II1}, the sum in \eqref{eq:ssum} over linearly independent $\overline{q}$ and $\overline{\ell}$
can be estimated as
\begin{align*}
\ll \sum_{\upsilon_1\, e^{s}\le \|\overline{q}\|,\|\overline{\ell}\|\le \upsilon_2\, e^s}
\prod_{i=1}^m \|\overline{q}\|^{-w_i}\|\overline{\ell}\|^{-w_i}\ll
\sum_{\upsilon_1\, e^{s}\le \|\overline{q}\|,\|\overline{\ell}\|\le \upsilon_2\, e^s}
\|\overline{q}\|^{-n}\|\overline{\ell}\|^{-n}\ll 1.
\end{align*}
For a subset $I$ of $\{1,\ldots,m\}$, we set 
$w(I):=\sum_{i\in I} w_i$. Then using \eqref{eq:II2}, we deduce that
the sum in \eqref{eq:ssum} over linearly dependent $\overline{q}$ and $\overline{\ell}$
is bounded by
\begin{align}
&\ll {\sum}_{\upsilon_1\, e^{s}\le \|\overline{q}\|,\|\overline{\ell}\|\le \upsilon_2\, e^s}^{*} \sum_{I\subset \{1,\ldots,m\}} \|\overline{q}\|^{-w(I)}\|\overline{\ell}\|^{-w(I)}  N(q_1,\ell_1)^{|I^c|}\max\left(\|\overline{q}\|,\|\overline{\ell}\|\right)^{-(|I^c|+w({I^c}))} \nonumber \\
&\ll \sum_{I\subset \{1,\ldots,m\}} (e^s)^{-(n+|I^c|+w(I))} {\sum}_{\upsilon_1\, e^{s}\le \|\overline{q}\|,\|\overline{\ell}\|\le \upsilon_2\, e^s}^* N(q_1,\ell_1)^{|I^c|}.\label{eq:long}
\end{align}
The star indicates that the sum is taken over linearly dependent $\overline{q}$ and $\overline{\ell}$. \\

When $I^c=\emptyset$, then $w(I)=n$.
Since the number of $(\overline{q},\overline{\ell})$
satisfying $\upsilon_1\, e^{s}\le \|\overline{q}\|,\|\overline{\ell}\|\le \upsilon_2\, e^s$
is estimated as $O(e^{2ns})$, it is clear that the corresponding term in
the above sum is uniformly bounded. 

Now we suppose that $I^c\ne \emptyset$.
Since $\overline{q}$ and $\overline{\ell}$ are linearly dependent,
the vector $\overline{\ell}$ is uniquely determined given $\ell_1$ and $\overline{q}$,
and we obtain that for some $\upsilon_1',\upsilon_2'>0$,
\begin{align*}
{\sum}_{\upsilon_1\, e^{s}\le \|\overline{q}\|,\|\overline{\ell}\|\le \upsilon_2\, e^s}^* N(q_1,\ell_1)^{|I^c|}\ll (e^{s})^{n-1}\sum_{\upsilon_1'\, e^{s}\le |q_1|\le \upsilon'_2\, e^s} \sum_{1\le |\ell_1|\le |q_1|} N(q_1,\ell_1)^{|I^c|}.
\end{align*}

We shall use the following lemma:

\begin{lemma}\label{l:solutions}
For every $k\ge 1$,
$$	
\sum_{1\le q\le T} \sum_{1\le \ell\le q} N(q,\ell)^k\ll
T^{k+1}(\log T)^{\nu_k}	
$$
where $\nu_1=1$ and $\nu_k=0$ when $k\ge 2$.
\end{lemma}

\begin{proof}
We observe that the sum of $N(q,\ell)^k$ over $1\le \ell\le q$ is equal 
to the number of solutions $(p_1,\ldots,p_k,r_1,\ldots, r_k,\ell)$
of the system of equations 
\begin{equation}
\label{eq:eq1}
q r_1-\ell p_1= 0,\ldots, q r_k-\ell p_k= 0
\end{equation}
satisfying
$$
|p_1|,\ldots,|p_k|\le (c_0^{-n}\vartheta+n)q\quad\hbox{and}\quad
1\le \ell\le q.
$$
We order these solutions according to $d:=\gcd(q,\ell)$. 
Let $q=q'd$ and $\ell=\ell'd$. Then $q'$ and $\ell'$ are coprime, and 
the system \eqref{eq:eq1} is equivalent to
\begin{equation}
\label{eq:eq2}
q' r_1-\ell' p_1= 0,\ldots, q' r_k-\ell' p_k= 0.
\end{equation}
Because of coprimality, each $p_i$ have to be divisible by $q'$,
so that the number of such $p_i$'s is at most $O(q/q')=O(d)$.
We note that given $d$ the number of possible choices for $\ell$ is at most $q/d$,
and $(p_1,\ldots,p_k,\ell)$ uniquely determine $(r_1,\ldots,r_k)$.
Hence, the number of solutions of \eqref{eq:eq1} is estimated by
$$
\ll \sum_{d|q} (q/d)d^k=q\sigma_{k-1}(q),
$$
where $\sigma_{k-1}(q)=\sum_{d|q} d^{k-1}$.
Writing $q=q'd$, we conclude that
\begin{align*}
\sum_{1\le q\le T} \sum_{1\le \ell\le q} N(q,\ell)^k &\ll
\sum_{1\le q\le T} q\sigma_{k-1}(q)\le T\sum_{1\le q'\le T} \sum_{q=1}^{\lfloor T/q'\rfloor} d^{k-1}\\
&\ll T^{k+1}\sum_{1\le q'\le T} (q')^{-k}\ll
T^{k+1}(\log T)^{\nu_k}.
\end{align*}
This proves the lemma.
\end{proof}

A simple modification of this argument also gives that
$$	
\sum_{1\le |q|\le T} \sum_{1\le |\ell|\le |q|} N(q,\ell)^k\ll
T^{k+1}(\log T)^{\nu_k}.
$$
Hence, it follows that 
$$
{\sum}_{\upsilon_1\, e^{s}\le \|\overline{q}\|,\|\overline{\ell}\|\le \upsilon_2\, e^s}^* N(q_1,\ell_1)^{|I^c|}\ll (e^s)^{n+|I^c|}(1+s)^{\nu(I)},
$$ 
where $\nu(I)=1$ when $|I^c|=1$ and $\nu(I)=0$ otherwise.
Thus, the sum \eqref{eq:long} is estimated as
$$
\ll 1+ \sum_{I\subsetneq \{1,\ldots,m\}} e^{-s w(I)} (1+s)^{\nu(I)}.
$$
The terms in this sum are uniformly bounded unless $I=\emptyset$ and $|I^c|=1$,
namely, when $m>1$. When $m=1$, we obtain the bound $O(1+s)$.
This proves the proposition.
\end{proof}
	
\subsection{Truncated Siegel transform}\label{sec:sieg_tran}

The Siegel transform of a compactly supported function is typically unbounded on $\cX$; to deal with this complication,
it is natural to approximate $\hat f$ by compactly supported functions on $\cX$,
the so called \emph{truncated Siegel transforms}, which we shall denote by $\hat f^{(L)}$.
They will be constructed using a smooth cut-off function $\eta_L$, which will be defined in the
following lemma.
 
\begin{lemma}\label{eq:eta}
For every $c>1$, there exists a family $(\eta_L)$ in $C_c^\infty(\cX)$ 
satisfying:
\begin{align*}
0\le \eta_L \le 1,\quad\eta_L=1 \;\; \hbox{on $\{\alpha\le c^{-1}\, L\}$},\quad
\eta_L=0 \;\; \hbox{on $\{\alpha> c\,L\}$},\quad \|\eta_L\|_{C^k}\ll 1.
\end{align*}
\end{lemma}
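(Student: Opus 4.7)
My plan is to construct $\eta_L$ as a smoothing of the indicator function of the sublevel set $\{\alpha\le L\}$ by convolving against a fixed smooth bump on $G$, following exactly the strategy the authors already employed in the proof of Proposition~\ref{p:div1}. The key analytic input I will need is a uniform multiplicative continuity statement for $\alpha$ near the identity: for every $\lambda>1$ there exists an open identity neighborhood $V\subset G$ such that $\lambda^{-1}\alpha(x)\le \alpha(g^{-1}x)\le \lambda\,\alpha(x)$ for all $g\in V$ and all $x\in\cX$. This follows directly from the definition of $\alpha$ via $\Lambda$-rational subspaces, since for any $\Lambda$-rational subspace $W$ the subspace $gW$ is $(g\Lambda)$-rational with $d_{g\Lambda}(gW)=|\det(g|_W)|\,d_\Lambda(W)$, and the Jacobian factor can be made uniformly close to $1$ in $W$ by shrinking $V$; the lower half of this bound is precisely what the authors invoke in the proof of Proposition~\ref{p:div1}.

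Given such a $V$, the concrete construction is: fix any $c_0\in(1,c)$, take $V$ as above for the choice $\lambda=c_0$, select a non-negative $\rho\in C_c^\infty(G)$ supported in $V$ with $\int_G\rho\,dm_G=1$, and set
\[
\eta_L(x):=\int_G\rho(g)\,\chi_L(g^{-1}x)\,dm_G(g),
\]
where $\chi_L$ is the characteristic function of $\{\alpha\le L\}\subset\cX$. Mahler's Compactness Criterion ensures that $\chi_L$ has compact support, and convolution with the compactly supported $\rho$ then yields a compactly supported $\eta_L$. The pointwise inequalities $0\le \eta_L\le 1$ are immediate from the definition. If $\alpha(x)\le c^{-1}L$, then for every $g\in\supp(\rho)\subset V$ we have $\alpha(g^{-1}x)\le c_0\,c^{-1}L<L$ (using $c_0<c$), so $\chi_L(g^{-1}x)=1$ throughout $\supp(\rho)$ and hence $\eta_L(x)=1$; if $\alpha(x)>cL$, the dual bound $\alpha(g^{-1}x)\ge c_0^{-1}cL>L$ forces $\eta_L(x)=0$.

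For the uniform estimate $\|\eta_L\|_{C^k}\ll 1$, I will exploit bi-invariance of $m_G$ to transfer derivatives onto $\rho$: for any monomial $Z$ of degree at most $k$ in a fixed basis of $\Lie(G)$, a standard change-of-variable computation yields $\cD_Z\eta_L=(\cD_Z\rho)*\chi_L$, whence
\[
\|\cD_Z\eta_L\|_{C^0}\le \|\cD_Z\rho\|_{L^1(G)}\,\|\chi_L\|_{C^0}\le \|\cD_Z\rho\|_{L^1(G)},
\]
which is a bound independent of $L$. I do not anticipate a genuine obstacle in this argument; the only point calling for a moment of care is the uniform multiplicative continuity of $\alpha$ under small perturbations in $G$, and this is essentially what the authors have implicitly used already in the proof of Proposition~\ref{p:div1}.
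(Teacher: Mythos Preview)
Your proposal is correct and follows essentially the same approach as the paper: both construct $\eta_L$ as the convolution $\rho*\chi_L$ of a fixed smooth bump $\rho\in C_c^\infty(G)$ against the indicator of $\{\alpha\le L\}$, using the multiplicative continuity of $\alpha$ near the identity to verify the level-set conditions and transferring derivatives onto $\rho$ via bi-invariance of $m_G$ to obtain the uniform $C^k$ bound. Your introduction of the auxiliary constant $c_0\in(1,c)$ and your explicit justification of the multiplicative continuity of $\alpha$ via $d_{g\Lambda}(gW)=|\det(g|_W)|\,d_\Lambda(W)$ are minor refinements; the paper simply asserts the existence of a small enough neighborhood with the constant $c$ itself, which also works.
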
 

\begin{proof}
Let $\chi_L$ denote the indicator function of the subset $\{\alpha\le L\} \subset \cX$, and pick
a non-negative $\phi\in C_c^\infty(G)$ with $\int_G \phi\, dm_G=1$ and with support in a 
sufficiently small neighbourhood of identity in $G$ to ensure that for all $g\in \hbox{supp}(\phi)$ 
and $x\in \cX$,
$$
c^{-1}\, \alpha(x)\le \alpha(g^{\pm 1}x)\le c\, \alpha(x).
$$
We now define $\eta_L$ as 
$$
\eta_L(x):=(\phi*\chi_L)(x)=\int_G \phi(g)\chi_L(g^{-1}x)\, dm_G(g).
$$
Since $\phi\ge 0$ and $\int_G \phi\, dm_G=1$, it is clear that $0\le \eta_L \le 1$. If $\alpha(x)\le c^{-1}\, L$, then 
for $g \in\hbox{supp}(\phi)$, we have $\alpha(g^{-1}x) \le L$, so that $\eta_L(x)=\int_G \phi\, dm_G=1$. If $\alpha(x)>c\, L$, 
then for $g\in \hbox{supp}(\phi)$, we have $\alpha(g^{-1}x) > L$, so that $\eta_L(x)=0$. \\

To prove the last property, we observe that it follows from invariance of $m_G$
that for a differential operator $\cD_Z$ as in \eqref{eq:diff},  we have $\cD_Z\eta_L=(\cD_Z\phi)*\chi_L$. Therefore, $\hbox{supp}(\cD_Z\eta_L)\subset \{\alpha\le c\,L\}$ and $\|\cD_Z\eta_L\|_{C^0}\le \|\cD_Z\phi\|_{L^1(G)}$, whence
$\|\eta_L\|_{C^k}\ll 1$.
\end{proof}

For a bounded function $f:\bR^{m+n}\to\bR$ with compact support, 
we define the \emph{truncated Siegel transform} of $f$ as
$$
\hat f^{(L)}:=\hat f\cdot \eta_L.
$$
We record some basic properties of this transform that will be used later in the proofs.

\begin{lemma}\label{l:siegel_trans}
For $f\in C_c^\infty(\bR^{m+n})$, the truncated Siegel transform $\hat f^{(L)}$ is
in $C_c^\infty(\cX)$, and it satisfies
\begin{align}
& \left\| \hat f^{(L)}\right\|_{L^p(\cX)}\le \|\hat f\|_{L^p(\cX)}  \ll_{\hbox{\rm\tiny supp}(f),p} \|f\|_{C^0} \quad\hbox{for all $p<m+n$},
\label{eq:ss0}\\
&\left\|\hat f^{(L)}\right\|_{C^0}\ll_{\hbox{\rm\tiny supp}(f)} L\, \|f\|_{C^0}, \label{eq:ss1}\\
&\left\|\hat f^{(L)}\right\|_{C^k}\ll_{\hbox{\rm\tiny supp}(f)} L\,  \|f\|_{C^k}, \label{eq:ss2} \\
& \left\|\hat f-\hat f^{(L)}\right\|_{L^1(\cX)}\ll_{\hbox{\rm\tiny supp}(f),\tau} L^{-\tau}\, \|f\|_{C^0}\quad\hbox{ for all $\tau<m+n-1$,} \label{eq:ss4} \\
& \left\|\hat f-\hat f^{(L)}\right\|_{L^2(\cX)}\ll_{\hbox{\rm\tiny supp}(f),\tau} L^{-(\tau-1)/2}\, \|f\|_{C^0}\quad\hbox{ for all $\tau<m+n-1$.} \label{eq:ss5}
\end{align}
Moreover, the implied constants are uniform when $\hbox{\rm supp}(f)$ is contained in a fixed compact set.
\end{lemma}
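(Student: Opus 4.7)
\textbf{Proof plan for Lemma \ref{l:siegel_trans}.}
The plan is to handle the six assertions essentially one by one, using only two tools: the pointwise bound $|\hat f(\Lambda)| \ll \|f\|_{C^0}\,\alpha(\Lambda)$ from Proposition \ref{l:alpha}, and the tail bound $\mu_\cX(\{\alpha\ge L\})\ll_p L^{-p}$ from Proposition \ref{p:alpha_int}. The key structural observation, used throughout, is that $\hat f^{(L)} = \hat f\cdot\eta_L$ is supported inside $\{\alpha\le cL\}$, where (by Mahler) $\alpha$ is bounded and the defining series of $\hat f$ is locally finite with a uniformly bounded number of terms. Thus on this support $\hat f^{(L)}$ inherits the smoothness of $f$, so $\hat f^{(L)}\in C^\infty_c(\cX)$.

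First I would treat \eqref{eq:ss0}--\eqref{eq:ss1}. The inequality $\|\hat f^{(L)}\|_{L^p}\le \|\hat f\|_{L^p}$ is immediate from $0\le\eta_L\le 1$; combining the pointwise bound $|\hat f|\ll \|f\|_{C^0}\alpha$ with Proposition \ref{p:alpha_int} yields the $L^p$ estimate for $p<m+n$, and the same pointwise bound restricted to $\{\alpha\le cL\}$ gives \eqref{eq:ss1}. For the $C^k$-bound \eqref{eq:ss2} I would apply Leibniz to $\cD_Z(\hat f\cdot\eta_L)=\sum_{Z=Z_1Z_2}\cD_{Z_1}\hat f\cdot \cD_{Z_2}\eta_L$; the crucial identity is that for $Y\in\Lie(G)$ the operator $\cD_Y$ applied to a Siegel transform produces the Siegel transform of the function $z\mapsto (\widetilde Y f)(z)$, where $\widetilde Y$ is the linear vector field on $\bR^{m+n}$ induced by $Y$. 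Iterating this shows $\cD_{Z_1}\hat f=\widehat{g_{Z_1}}$ for some $g_{Z_1}\in C^\infty_c(\bR^{m+n})$ with $\|g_{Z_1}\|_{C^0}\ll \|f\|_{C^k}$ and support inside a fixed compact set depending only on $\supp(f)$, so Proposition \ref{l:alpha} gives $|\cD_{Z_1}\hat f|\ll \|f\|_{C^k}\,\alpha$. Combined with $\|\eta_L\|_{C^k}\ll 1$ from Lemma \ref{eq:eta} and $\alpha\le cL$ on the support, this yields \eqref{eq:ss2}.

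For the tail estimates \eqref{eq:ss4}--\eqref{eq:ss5} I would use that the difference $\hat f-\hat f^{(L)}=\hat f(1-\eta_L)$ is supported on $\{\alpha\ge c^{-1}L\}$, so by Proposition \ref{l:alpha}
\[
|\hat f-\hat f^{(L)}|\ll \|f\|_{C^0}\,\alpha\cdot \mathbf 1_{\{\alpha\ge c^{-1}L\}}.
\]
A layer-cake computation using $\mu_\cX(\alpha\ge t)\ll_p t^{-p}$ gives, for any $1<p<m+n$,
\[
\int_{\{\alpha\ge L\}}\alpha\,d\mu_\cX \;=\; L\,\mu_\cX(\alpha\ge L)+\int_L^\infty \mu_\cX(\alpha>s)\,ds \;\ll_p\; L^{1-p},
\]
and similarly for $2<p<m+n$, $\int_{\{\alpha\ge L\}}\alpha^2\,d\mu_\cX\ll_p L^{2-p}$. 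Setting $\tau=p-1$ yields \eqref{eq:ss4} and \eqref{eq:ss5} respectively.

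The only non-routine point is the identification of $\cD_Z\hat f$ with the Siegel transform of a controlled function in the proof of \eqref{eq:ss2}; everything else is a clean application of Propositions \ref{l:alpha}--\ref{p:alpha_int} together with the cutoff properties of $\eta_L$ from Lemma \ref{eq:eta}. Since $\supp(f)$ enters only through the pointwise estimate $|\hat f|\ll_{\supp(f)}\|f\|_{C^0}\alpha$, the implied constants are uniform on compact families of supports, giving the final assertion.
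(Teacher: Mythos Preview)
Your proposal is correct and follows essentially the same route as the paper: Proposition~\ref{l:alpha} for the pointwise bound $|\hat f|\ll\|f\|_{C^0}\alpha$, the identity $\cD_Z\hat f=\widehat{\cD_Z f}$ for the $C^k$-estimate, and the support properties of $\eta_L$ from Lemma~\ref{eq:eta}. The only difference is cosmetic: for \eqref{eq:ss4}--\eqref{eq:ss5} the paper estimates $\int_{\{\alpha\ge c^{-1}L\}}\alpha\,d\mu_\cX$ via H\"older ($\|\alpha\|_{L^p}\cdot\mu_\cX(\{\alpha\ge c^{-1}L\})^{1/q}$), whereas you use a layer-cake decomposition; both give the same exponent $\tau=p-1$.
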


\begin{proof}
It follows from Proposition \ref{l:alpha} that
$$
\left|\hat f^{(L)}\right|\ll_{\hbox{\rm\tiny supp}(f)} \|f\|_{C^0} 
\,\alpha\, \eta_L .
$$
Since $0\le \eta_L\le 1$, \eqref{eq:ss0} follows from Proposition \ref{p:alpha_int}, and the upper bound in \eqref{eq:ss1} holds 
since $\hbox{supp}(\eta_L)\subset \{\alpha\le cL\}$. \\

We observe that for a differential operator $\cD_Z$ as in \eqref{eq:diff}, we have $\cD_Z(\hat f)=
\widehat{\cD_Z f}$. Hence, we deduce from Proposition \ref{l:alpha} that 
$$
\left|\cD_Z(\hat f)\right| \ll_{\hbox{\rm\tiny supp}(f)} \|f\|_{C^k}\, \alpha.
$$
Since $\hbox{supp}(\eta_L)\subset \{\alpha\le cL\}$ and $\|\eta_L\|_{C^k}\ll 1$,
we deduce that 
$$
\left\|\hat f^{(L)}\right\|_{C^k} \ll_{\hbox{\rm\tiny supp}(f)} L\,  \|f\|_{C^k}
$$
proving \eqref{eq:ss2}. \\

To prove \eqref{eq:ss4}, we observe that since $0\le \eta_L\le 1$ and 
$\eta_L=1$ on $\{\alpha< c^{-1}L\}$, it follows from Proposition \ref{l:alpha}
that
$$
\left\|\hat f-\hat f^{(L)}\right\|_{L^1(\cX)}=\int_{\cX} |\hat f|\cdot |1-\eta_L|\, d\mu_\cX \ll_{\hbox{\rm\tiny supp}(f)} \|f\|_{C^0}\int_{\{\alpha \ge c^{-1}L\}} \alpha\, d\mu_\cX.
$$
Hence, applying the H\"older inequality with $1\le p<m+n$ and $q=(1-1/p)^{-1}$,
we deduce that
$$
\left\|\hat f-\hat f^{(L)}\right\|_{L^1(\cX)}
\ll_{\hbox{\rm\tiny supp}(f)}
\|f\|_{C^0}\, \|\alpha\|_p\, \mu_\cX\left(\{\alpha \ge c^{-1}L\}\right)^{1/q}.
$$
Now it follows from Proposition \ref{p:alpha_int} that 
$$
\left\|\hat f-\hat f^{(L)}\right\|_{L^1(\cX)}
\ll_{\hbox{\rm\tiny supp}(f),p} \|f\|_{C^0}\,L^{-(p-1)},
$$
which proves \eqref{eq:ss4}. The proof of \eqref{eq:ss5} is similar,
and we omit the details.
\end{proof}

\section{CLT for smooth Siegel transforms}\label{sec:CLT_siegel_smooth}

Assume that $f\in C^\infty_c(\mathbb{R}^{m+n})$ satisfies $f\ge 0$ and $\hbox{supp}(f)\subset \{(x_{m+1},\ldots,x_{m+n})\ne 0\}$.
We shall in this section analyze the asymptotic behavior of the averages
$$
F_N(y):=\frac{1}{\sqrt{N}} \sum_{s=0}^{N-1} \left(\hat f(a^s y)-\mu_\cY(\hat f\circ a^s)\right)\quad \hbox{ with $y\in \cY,$}
$$
and prove the following result:

\begin{theorem}\label{th:CLT_smooth_Siegel}
	If $m\ge 2$ and $f$ is as above,  then the variance
	\begin{align*}
	\sigma_f^2&:=\sum_{s=-\infty}^\infty \left( \int_{\cX} (\hat f\circ a^s)\hat f \, d\mu_\cX
		-\mu_\cX(\hat f)^2\right)\\
		&=\zeta(m+n)^{-1} \sum_{s=-\infty}^\infty\sum_{p,q\ge 1} \left(\int_{\mathbb{R}^{m+n}} f(pa^sz)f(qz)\,dz+\int_{\mathbb{R}^{m+n}} f(pa^sz)f(-qz)\,dz\right).
	\end{align*}
	is finite, and for every $\xi\in \mathbb{R}$, 
	\begin{align}\label{eq:distr}
	\mu_\cY\left(\{y\in \cY:\, F_N(y)<\xi\}\right)\to
	\hbox{\rm Norm}_{\sigma_f}(\xi)
	\end{align}
	as $N\to \infty$. 
\end{theorem}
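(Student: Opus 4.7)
My plan is to reduce Theorem~\ref{th:CLT_smooth_Siegel} to the compactly supported CLT of Theorem~\ref{th:CLT_compact} by approximating $\hat f$ with the truncations $\hat f^{(L)} \in C_c^\infty(\cX)$ from Section~\ref{sec:sieg_tran}, with $L=L_N$ growing as a suitable power of $N$. Setting
\[
F_N^{(L)}(y) := \frac{1}{\sqrt N}\sum_{s=0}^{N-1}\bigl(\hat f^{(L)}(a^s y) - \mu_\cY(\hat f^{(L)}\circ a^s)\bigr),
\]
I will (a) show that $\|F_N - F_N^{(L_N)}\|_{L^1(\cY)} \to 0$, and (b) establish a CLT for $F_N^{(L_N)}$ with variance tending to $\sigma_f^2$; together these yield \eqref{eq:distr} by Slutsky's theorem.

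For (a), since $\hat f-\hat f^{(L)}$ is supported in $\{\alpha \ge c^{-1}L\}$, Cauchy--Schwarz gives
\[
\|(\hat f-\hat f^{(L)})\circ a^s\|_{L^1(\cY)} \leq \|\hat f\circ a^s\|_{L^2(\cY)}\cdot\mu_\cY\!\bigl(\alpha\circ a^s > c^{-1}L\bigr)^{1/2}.
\]
Proposition~\ref{prop:sup2} (essentially using $m\ge 2$) bounds the first factor uniformly in $s$, while Proposition~\ref{p:div1} gives $\ll L^{-p/2}$ for the second factor for any $p < m+n$, provided $s \geq \kappa\log L$; for smaller $s$ I use the uniform $L^1$-bound of Proposition~\ref{prop:sup}. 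Summing produces $\|F_N - F_N^{(L)}\|_{L^1(\cY)} \ll (\log L)/\sqrt N + \sqrt N\,L^{-p/2}$, which vanishes when $L_N \gg N^{1/p}$. For (b), I revisit Section~\ref{sec:CLT_compact} with $\phi := \hat f^{(L_N)}$; Lemma~\ref{l:siegel_trans} supplies $\|\hat f^{(L)}\|_{C^0}, \|\hat f^{(L)}\|_{C^k} \ll L$, and plugging into \eqref{eq:cumm_last} yields $\cum^{(r)}_{\mu_\cY}(F_N^{(L_N)}) \to 0$ for $r\ge 3$ provided $L_N$ grows slowly enough. The variance is computed following Section~\ref{sec:var_compact}, using Corollary~\ref{cor:corr0} combined with the non-divergence results of Section~\ref{sec:non_div} to handle escape-of-mass contributions; this produces
\[
\lim_{N\to\infty}\|F_N\|_{L^2(\cY)}^2 = \sum_{s\in\bZ}\bigl(\mu_\cX((\hat f\circ a^s)\hat f) - \mu_\cX(\hat f)^2\bigr),
\]
and each summand is evaluated via Rogers' formula (Proposition~\ref{p:rogers}) applied to the primitive-vector expansion $\hat f(a^s\Lambda)\hat f(\Lambda) = \sum_{p,q\ge 1}\sum_{z_1,z_2\in\Lambda^*}f(pa^s z_1)f(qz_2)$; Siegel's mean value theorem then cancels the leading $\zeta(m+n)^{-2}$-term against $\mu_\cX(\hat f)^2$, and the two remaining singular terms produce precisely the explicit series in the statement.

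The main obstacle I expect is the delicate balancing of $L_N$: the approximation step forces $L_N \gg N^{1/(m+n-\epsilon)}$, while a naive application of \eqref{eq:cumm_last} at $r=3$ (Case~1 of Section~\ref{sec:cumm_comp}) only allows $L_N \ll N^{1/6-\epsilon}$. These ranges overlap only when $m+n>6$, so to cover the small-dimensional cases (in particular $m=2,n=1$) I plan to refine Case~1 by replacing $\|\phi\|_{C^0}^r$ with $\prod_i\|\psi_{s_i}\|_{L^r(\cY)}$ via H\"older's inequality. Log-convexity of $L^p$-norms combined with Proposition~\ref{prop:sup2} gives the uniform interpolation bound
\[
\|\hat f^{(L)}\circ a^s\|_{L^r(\cY)} \leq \|\hat f^{(L)}\|_\infty^{1-2/r}\,\|\hat f\circ a^s\|_{L^2(\cY)}^{2/r} \ll L^{\,1-2/r},
\]
which replaces the factor $L^r$ in Case~1 by $L^{r-2}$. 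The resulting cumulant bound then only requires $L_N \ll N^{1/2}/(\log N)^2$ at $r=3$, which is compatible with the approximation constraint as long as $m+n \ge 3$, hence for all $m\ge 2$.
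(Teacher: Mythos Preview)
Your proposal is correct and follows the same overall architecture as the paper---truncate $\hat f$ to $\hat f^{(L)}$, apply the cumulant method to $F_N^{(L)}$, control the $L^1$ approximation error---but your refinement of the Case~1 cumulant bound is genuinely different from the paper's and worth highlighting.

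The paper handles Case~1 by first \emph{transferring} the $\cY$-cumulant to an $\cX$-cumulant via Corollary~\ref{cor:corr}: it shows that for $(s_1,\ldots,s_r)\in\Omega_\cQ(\alpha_j,\beta_{j+1};M,N)$ with $\cQ=\{\{0\},\{1,\ldots,r\}\}$,
\[
\cum^{(r)}_{\mu_\cY}\bigl(\psi^{(L)}_{s_1},\ldots,\psi^{(L)}_{s_r}\bigr)=\cum^{(r)}_{\mu_\cX}\bigl(\phi^{(L)}\circ a^{s_1},\ldots,\phi^{(L)}\circ a^{s_r}\bigr)+O\bigl(e^{-(\delta\beta_{j+1}-r\xi\alpha_j)}L^r\bigr),
\]
and then bounds the $\cX$-cumulant by the generalized H\"older inequality using $\|\hat f^{(L)}\|_{L^{m+n-1}(\cX)}\ll 1$ (uniform in $L$, from Proposition~\ref{p:alpha_int}). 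This yields the factor $L^{(r-(m+n-1))^+}$ in place of $L^r$. Your route stays on $\cY$ throughout and interpolates between $L^\infty$ and the uniform $L^2(\cY)$ bound of Proposition~\ref{prop:sup2}, producing $L^{r-2}$. The paper's exponent is strictly sharper when $m+n>3$ (it vanishes for $r\le m+n-1$), but at the critical dimension $m+n=3$ both give $L^{r-2}$, and both reduce to the same constraint $q<1/2$ on $L=N^q$, compatible with the approximation requirement $q>1/(m+n)$ for all $m\ge 2$. Your argument is more direct since it avoids the transfer to $\cX$; the paper's buys a cleaner separation between the dynamical input (equidistribution of $a^s\cY$) and the static input (integrability of $\hat f$ on $\cX$). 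The paper also introduces an auxiliary cutoff $M=M(N)$, starting the sum at $s=M$ so that Case~0 is empty; your treatment of small $s$ via Proposition~\ref{prop:sup} in the approximation step, together with the same $L^r(\cY)$ bound for Case~0 in the cumulant step, achieves the same effect with less bookkeeping.
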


The proof of Theorem \ref{th:CLT_smooth_Siegel} follows the same plan as the proof of Theorem \ref{th:CLT_compact}, but we need to develop an additional approximation argument which involves truncations of the Siegel transform
$\hat f$. This can potentially change the behavior of the averages $F_N$,
so we will have to take into account possible escape of mass for 
the sequences of submanifolds $a^s\cY$ in $\cX$. 

\medskip

Throughout the proof, 
we shall frequently make use of the basic observation that 
if we approximate $F_N$ by $\tilde F_N$ in such a way that $\|F_N-\tilde F_N\|_{L^1(\cY)}\to 0$, then $F_N$ and $\tilde F_N$ will
have the same convergence in distribution. 

\medskip

Let
\begin{equation}
\label{eq:F_N_t}
\tilde F_N:=\frac{1}{\sqrt{N}} \sum_{s=M}^{N-1} \left(\hat f\circ a^s-\mu_\cY(\hat f\circ a^s)\right)
\end{equation}
for some $M=M(N)\to \infty$ that will be chosen later.
We observe that
\begin{equation}
\label{eq:tilde}
\|F_N-\tilde{F}_N\|_{L^1(\cY)}\le
\frac{1}{\sqrt{N}} \sum_{s=0}^{M-1} \left\|\hat f\circ a^s-\mu_\cY(\hat f\circ a^s)\right\|_{L^1(\cY)}\le \frac{2M}{\sqrt{N}}
\sup_{s\ge 0} \int_\cY |\hat f\circ a^s|\, d\mu_\cY.
\end{equation}
and thus, by Proposition \ref{prop:sup} we see that 
$$
\|F_N-\tilde{F}_N\|_{L^1(\cY)}\to 0\quad\hbox{as $N\to \infty$ }.
$$
provided 
\begin{equation}
\label{eq:cond1}
M=o(N^{1/2}).
\end{equation}
It particular, it follows that if \eqref{eq:distr} holds for $\tilde F_N$, then it also holds for $F_N$,
we shall prove the former. In order to simplify notation, let us drop the tilde, and assume
from now on that $F_N$ is given by \eqref{eq:F_N_t}. \\


Given a sequence $L = L(N)$, which shall be chosen later, we consider the average 
$$
F^{(L)}_N:=\frac{1}{\sqrt{N}} \sum_{s=M}^{N-1} \left(\hat f^{(L)}\circ a^s-\mu_\cY(\hat f^{(L)}\circ a^s)\right)
$$
defined for the truncated Siegel transforms $\hat f^{(L)}$ introduced in Section \ref{sec:sieg_tran}. We have 
\begin{align*}
\left\| F_N-F^{(L)}_N\right\|_{L^1(\cY)}
&\le \frac{1}{\sqrt{N}} \sum_{s=M}^{N-1} \left\|\left(\hat f\circ a^s -\hat f^{(L)}\circ a^s\right)-\mu_\cY\left(\hat f\circ a^s-\hat f^{(L)}\circ a^s\right)\right\|_{L^1(\cY)}\\
&\le \frac{2}{\sqrt{N}} \sum_{s=M}^{N-1} \left\|\hat f\circ a^s -\hat f^{(L)}\circ a^s\right\|_{L^1(\cY)}.
\end{align*}
We recall that $\hat f^{(L)}=\hat f\cdot \eta_L$, $0\le \eta_L\le 1$, and $\eta_L(x)=1$
when $\alpha(x)\le c^{-1}\,L$, so that
\begin{align*}
\left\|\hat f\circ a^s -\hat f^{(L)}\circ a^s\right\|_{L^1(\cY)}=
\left\| (\hat f\circ a^s) (1-\eta_L\circ a^s)\right\|_{L^1(\cY)} 
\le \int_{\{\alpha(a^sy)\ge c^{-1}\, L\}} |\hat f(a^sy)|\, d\mu_\cY(y).
\end{align*}
Hence, by the Cauchy-Schwarz inequality,
$$
\left\|\hat f\circ a^s -\hat f^{(L)}\circ a^s\right\|_{L^1(\cY)}
\le  \left\| \hat f\circ a^s \right\|_{L^2(\cY)}\,
\mu_\cY\left(\left\{y\in \cY:\, \alpha(a^sy)\ge c^{-1}L \right\}\right)^{1/2}.
$$
Let us now additionally assume that 
\begin{equation}
\label{eq:cond2}
M\gg \log L,
\end{equation}
so that the assumption of Proposition \ref{p:div1} is satisfied when $s\ge M$. This implies that
$$
\mu_\cY\left(\left\{y\in \cY:\, \alpha(a^sy)\ge c^{-1}L \right\}\right)\ll_p L^{-p}\quad \hbox{for all $p<m+n$}.
$$
We also recall that by Proposition \ref{prop:sup2} when $m\ge 2$, 
$$
\sup_{s\ge 0} \left\| \hat f\circ a^s \right\|_{L^2(\cY)}<\infty,
$$
whence, for $s\ge M$,
$$
\left\|\hat f\circ a^s -\hat f^{(L)}\circ a^s\right\|_{L^1(\cY)}\ll_p 
L^{-p/2},
$$
and thus
\begin{align}\label{eq:FNNN}
\left\| F_N-F^{(L)}_N\right\|_{L^1(\cY)}\ll_p N^{1/2} L^{-p/2}\quad \hbox{for all $p<m+n$.}
\end{align}
If we now choose $L=L(N)\to \infty$ so that 
\begin{equation}
\label{eq:cond3}
N=o\left(L^p\right)\quad \hbox{for some $p<m+n$,}
\end{equation}
then it follows that
$$
\left\| F_N-F^{(L)}_N\right\|_{L^1(\cY)}\to 0\quad \hbox{as $N\to\infty$}.
$$
Hence, if we can show Theorem \ref{th:CLT_smooth_Siegel} for the averages $F^{(L)}_N$ with the parameter constraints
above, then it would also hold for $F_N$. In order to prove CLT for $(F^{(L)}_N)$, we follow the route of Proposition 
\ref{th:CLT} and estimate cumulants and $L^2$-norms of the sequence.

\subsection{Estimating cumulants}\label{sec:cum_smooth_siegel}

We set 
$$
\psi^{(L)}_s(y):=\hat f^{(L)}(a^sy)-\mu_\cY(\hat f^{(L)}\circ a^s).
$$
Our aim is to estimate 
\begin{equation}
\label{eq:ccc}
\hbox{Cum}_{\mu_\cY}^{(r)}\left(F_N^{(L)}\right)=\frac{1}{N^{r/2}} \sum_{s_1,\ldots,s_r=M}^{N-1}
\hbox{Cum}_{\mu_\cY}^{(r)}\left(\psi^{(L)}_{s_1},\ldots,\psi^{(L)}_{s_r}\right)
\end{equation}
when $r\ge 3$.
The argument proceeds as in Section \ref{sec:cumm_comp}, but 
we have to refine the previous estimates to take into account 
the dependence on the parameters $L$ and $M$.
Using the notation from Section \ref{sec:cumm_comp}, we have the decomposition
\begin{equation}\label{eq:decomp0}
\{M,\ldots,N-1\}^r =\Omega(\beta_{r+1};M,N) \cup \Big( \bigcup_{j=0}^{r} \bigcup_{|\cQ| \geq 2} \Omega_{\cQ}(\alpha_j,\beta_{j+1};M,N) \Big),
\end{equation}
where 
\begin{align*}
\Omega(\beta_{r+1};M,N) &:=\{M,\ldots,N-1\}^r \cap \Delta(\beta_{r+1}),\\
\Omega_{\cQ}(\alpha_j,\beta_{j+1};M,N) &:=\{M,\ldots,N-1\}^r \cap \Delta_{\cQ}(\alpha_j,\beta_{j+1}).
\end{align*}
We decompose the sum into the sums over $\Omega(\beta_{r+1};M,N)$ and
$\Omega_{\cQ}(\alpha_j,\beta_{j+1};M,N)$. Let us choose $M$ so that 
\begin{equation}
\label{eq:M0}
M> \beta_{r+1}.
\end{equation}
Then $\Omega(\beta_{r+1};M,N)=\emptyset$, and does not contribute to our estimates.

\subsubsection{\textbf{\underline{Case 1:} Summing over $(s_1,\ldots,s_r)\in \Omega_{\cQ}(\alpha_j,\beta_{j+1};M,N)$ with $\cQ=\{\{0\},\{1,\ldots,r\}\}$}.}
In this case, we shall show that
\begin{equation}
\label{eq:cc2_0}
\hbox{Cum}_{\mu_\cY}^{(r)}\left(\psi^{(L)}_{s_1},\ldots,\psi^{(L)}_{s_r}\right)\approx
\hbox{Cum}_{\mu_\cX}^{(r)}\left(\phi^{(L)}\circ a^{s_1},\ldots,\phi^{(L)}\circ a^{s_r}\right)
\end{equation}
where $\phi^{(L)}:= \hat f^{(L)}-\mu_\cX(\hat f^{(L)})$.
This reduces to estimating the integrals
\begin{align}
&\int_{\cY} \left(\prod_{i\in I} \psi^{(L)}_{s_i}\right)\,d\mu_\cY \label{eq:psi}\\
=&
\sum_{J\subset I} (-1)^{|I\backslash J|} 
\left(\int_{\cY} \left(\prod_{i\in J} \hat f^{(L)}\circ a^{s_i}\right)\,d\mu_\cY\right)
\prod_{i\in I\backslash J} \left(\int_{\cY} (\hat f^{(L)}\circ a^{s_i})\,d\mu_\cY\right).\nonumber
\end{align}
If $(s_1,\ldots,s_r)\in \Omega_\cQ(\alpha_j,\beta_{j+1};N)$, and thus
$$
|s_{i_1}-s_{i_2}|\le \alpha_j \quad\hbox{and}\quad s_{i_1}\ge \beta_{j+1}\quad\quad\hbox{for all $1\le i_1,i_2\le r,$}
$$
it follows from Corollary \ref{cor:corr} with $r=1$ that there exists $\delta>0$ such that
\begin{equation}\label{eq:psi1}
\int_{\cY} (\hat f^{(L)}\circ a^{s_i})\,d\mu_\cY
=\mu_{\cX}\left(\hat f^{(L)}\right)+O\left(e^{-\delta \beta_{j+1} }\,\left\|\hat f^{(L)}\right\|_{C^k}\right).
\end{equation}
For a fixed $J \subset I$, we define
$$
\Phi^{(L)}:=\prod_{i\in J} \hat f^{(L)}\circ a^{s_i-s_1},
$$
and note that for some $\xi=\xi(m,n,k)>0$, we have
$$
\left\|\Phi^{(L)}\right\|_{C^k}\ll \prod_{i\in J} \left\|\hat f^{(L)}\circ a^{s_i-s_1}\right\|_{C^k} \ll e^{|J|\xi\, \alpha_j}\,\left\|\hat f^{(L)}\right\|_{C^k}^{|J|}.
$$
If we again apply Corollary \ref{cor:corr} to the function $\Phi^{(L)}$, we obtain 
\begin{align}
\int_{\cY} \left(\prod_{i\in J} \hat f^{(L)}\circ a^{s_i}\right)\,d\mu_\cY
&=\int_{\cY} (\Phi^{(L)}\circ a^{s_1})\,d\mu_\cY \label{eq:PPsi}\\
&=\int_{\cX}\Phi^{(L)}\, d\mu_\cX+O\left(e^{-\delta \beta_{j+1} }\,\left\|\Phi^{(L)}\right\|_{C^k}\right) \nonumber\\
&=\int_{\cX}\left(\prod_{i\in J} \hat f^{(L)}\circ a^{s_i}\right)\, d\mu_\cX+O\left(e^{-\delta \beta_{j+1} } e^{r\xi\, \alpha_j}\,\left\|\hat f^{(L)}\right\|_{C^k}^{|J|}\right), \nonumber
\end{align}
where we used that $\mu_\cX$ is invariant under the transformation $a$.
Let us now choose the exponents $\alpha_j$ and $\beta_{j+1}$ so that
$\delta \beta_{j+1}-r\xi\alpha_j>0$. 
Combining \eqref{eq:psi}, \eqref{eq:psi1} and \eqref{eq:PPsi},
we deduce that 
\begin{align}
\int_{\cY} \left(\prod_{i\in I} \psi^{(L)}_{s_i}\right)\,d\mu_\cY \label{eq:prod}
=&
\sum_{J\subset I} (-1)^{|I\backslash J|} 
\left(\int_{\cX} \left(\prod_{i\in J} \hat f^{(L)}\circ a^{s_i}\right)\,d\mu_\cX\right)
\mu_{\cX}\left(\hat f^{(L)}\right)^{|I\backslash J|}\\
&+O\left(e^{-\delta \beta_{j+1} } e^{r\xi\, \alpha_j}\,\left\|\hat f^{(L)}\right\|_{C^k}^{|I|}\right) \nonumber\\
=& \int_{\cX} \prod_{i\in I} \left(\hat f^{(L)}\circ a^{s_i}-\mu_\cX(\hat f^{(L)})\right)\,d\mu_\cX
+O\left(e^{-(\delta \beta_{j+1}-r\xi\alpha_j )}\,\left\|\hat f^{(L)}\right\|_{C^k}^{|I|}\right), \nonumber
\end{align}
and thus, for any partition $\cP$,
$$
\prod_{I\in \cP}\int_{\cY} \left(\prod_{i\in I} \psi^{(L)}_{s_i}\right)\,d\mu_\cY
=
\prod_{I\in\cP}\int_{\cX} \left(\prod_{i\in I} \phi^{(L)}\circ a^{s_i}\right)\,d\mu_\cX
+O\left(e^{-(\delta \beta_{j+1}-r\xi\alpha_j )}\,\left\|\hat f^{(L)}\right\|_{C^k}^{r}\right),
$$
and consequently,
\begin{align}\label{eq:cumm0}
\hbox{Cum}_{\mu_\cY}^{(r)}\left(\psi^{(L)}_{s_1},\ldots,\psi^{(L)}_{s_r}\right)=&
\;\hbox{Cum}_{\mu_\cX}^{(r)}\left(\phi^{(L)}\circ a^{s_1},\ldots,\phi^{(L)}\circ a^{s_r}\right)\\
&+O\left(e^{-(\delta \beta_{j+1}-r\xi\, \alpha_j)}\,\left\|\hat f^{(L)}\right\|_{C^k}^{r}\right)\nonumber
\end{align}
whenever $(s_1,\ldots,s_r)\in \Omega_\cQ(\alpha_j,\beta_{j+1};M,N)$ with $\cQ=\{\{0\},\{1,\ldots,r\}\}$,
from which \eqref{eq:cc2_0} follows. \\

We now claim that 
\begin{equation}
\label{eq:cum0_bound}
\left|\hbox{Cum}_{\mu_\cX}^{(r)}\left(\phi^{(L)}\circ a^{s_1},\ldots,\phi^{(L)}\circ a^{s_r}\right)\right| \ll_{f}  \left\|\hat f^{(L)}\right\|_{C^0}^{(r-(m+n-1))^+} \left\|\hat f^{(L)} \right\|_{L^{m+n-1}(\cX)}^{\min(r,m+n-1)},
\end{equation}
where we use the notation $x^+=\max(x,0)$. The implied constant in \eqref{eq:cum0_bound} and below in the proof
depend only on $\hbox{supp}(f)$. By the definition of the cumulant, to prove  \eqref{eq:cum0_bound}, it suffices to show 
that for every $z\ge 1$ and indices $i_1,\ldots,i_z$,
\begin{equation}
\label{eq:cum0_bound_0}
\int_{\cX} \left|\left(\phi^{(L)}\circ a^{s_{i_1}}\right)\cdots\left(\phi^{(L)}\circ a^{s_{i_z}}\right)\right|\, d\mu_\cX
\ll_f \left\|\hat f^{(L)}\right\|_{C^0}^{(z-(m+n-1))^+}
\left\|\hat f^{(L)} \right\|_{L^{m+n-1}(\cX)}^{\min(z,m+n-1)}.
\end{equation}
Using 
the generalized H\"older inequality, we deduce that when $z\le m+n-1$,
\begin{align*}
\int_{\cX} \left|\left(\phi^{(L)}\circ a^{s_{i_1}}\right)\cdots\left(\phi^{(L)}\circ a^{s_{i_z}}\right)\right|\, d\mu_\cX&\le 
\left\|\phi^{(L)}\circ a^{s_{i_1}}\right\|_{L^{m+n-1}(\cX)}\cdots\left\|\phi^{(L)}\circ a^{s_{i_z}}\right\|_{L^{m+n-1}(\cX)}\\
&\ll \left\|\hat f^{(L)}\right\|_{L^{m+n-1}(\cX)}^z.
\end{align*}
Also when $z>m+n-1$, 
\begin{align*}
&\int_{\cX} \left|\left(\phi^{(L)}\circ a^{s_{i_1}}\right)\cdots\left(\phi^{(L)}\circ a^{s_{i_z}}\right)\right|\, d\mu_\cX\\
\le& \left\|\phi^{(L)}\right\|_{C^0}^{z-(m+n-1)}
\int_{\cX} \left|\left(\phi^{(L)}\circ a^{s_{i_1}}\right)\cdots\left(\phi^{(L)}\circ a^{s_{i_{m+n-1}}}\right)\right|\, d\mu_\cX\\
\ll_f & \left\|\hat f^{(L)}\right\|_{C^0}^{z-(m+n-1)} \left\|\hat f^{(L)}\right\|_{L^{m+n-1}(\cX)}^{m+n-1}.
\end{align*}
This implies \eqref{eq:cum0_bound_0} and \eqref{eq:cum0_bound}. \\

Finally, we recall that if $(s_1,\ldots,s_r)\in \Omega_{\cQ}(\alpha_j,\beta_{j+1};M,N)$
with $\cQ=\{\{0\},\{1,\ldots,r\}\}$, then we have $|s_{i_1}-s_{i_2}|\le \alpha_j$
for all $i_1\ne i_2$, and thus
\begin{equation}
\label{eq:card}
|\Omega_{\cQ}(\alpha_j,\beta_{j+1};M,N)|\ll N\alpha_j^{r-1}.
\end{equation}
Combining \eqref{eq:cum0_bound} and \eqref{eq:card}, we conclude that
\begin{align*}
&\frac{1}{N^{r/2}} \sum_{(s_1,\ldots,s_r)\in \Omega_{\cQ}(\alpha_j,\beta_{j+1};M,N)}\left|\hbox{Cum}_{\mu_\cX}^{(r)}\left( \phi^{(L)}\circ a^{s_1},\ldots,\phi^{(L)}\circ a^{s_r}\right)\right|\\
\ll_f&\; N^{1-r/2} \alpha_j^{r-1} 
\left\|\hat f^{(L)}\right\|_{C^0}^{(r-(m+n-1))^+} \left\|\hat f^{(L)} \right\|_{L^{m+n-1}(\cX)}^{\min(r,m+n-1)}.
\end{align*}
Hence, it follows from \eqref{eq:cumm0} that
\begin{align*}
&\frac{1}{N^{r/2}} \sum_{(s_1,\ldots,s_r)\in \Omega_{\cQ}(\alpha_j,\beta_{j+1};M,N)}\hbox{Cum}_{\mu_\cY}^{(r)}\left( \psi_{s_1}^{(L)},\ldots,\psi_{s_r}^{(L)}\right)\\
\ll_f &\; N^{r/2}\, e^{-(\delta\beta_{j+1}- r\alpha_j\xi)}\, \left\|\hat f^{(L)}\right\|_{C^k}^r\\
&+N^{1-r/2} \alpha_j^{r-1} 
\left\|\hat f^{(L)}\right\|_{C^0}^{(r-(m+n-1))^+} \left\|\hat f^{(L)} \right\|_{L^{m+n-1}(\cX)}^{\min(r,m+n-1)}
\end{align*}
and using Lemma \ref{l:siegel_trans}, we deduce that 
\begin{align*}
&\frac{1}{N^{r/2}} \sum_{(s_1,\ldots,s_r)\in \Omega_{\cQ}(\alpha_j,\beta_{j+1};M,N)}\hbox{Cum}_{\mu_\cY}^{(r)}\left( \psi_{s_1}^{(L)},\ldots,\psi_{s_r}^{(L)}\right)\\
\ll_f &\; N^{r/2}\, e^{-(\delta\beta_{j+1}- r\alpha_j\xi)}\, L^r \|f\|_{C^k} 
+N^{1-r/2} \alpha_j^{r-1} 
L^{(r-(m+n-1))^+}\|f\|_{C^0}^r.
\end{align*}

\subsubsection{\textbf{\underline{Case 2:} Summing over $(s_1,\ldots,s_r)\in \Omega_{\cQ}(\alpha_j,\beta_{j+1};M,N)$ with $|\cQ|\ge 2$ and 
	$\cQ\ne \{\{0\},\{1,\ldots,r\}\}$}.}
	
In this case, the estimate \eqref{eq:cum3} gives
for $(s_1,\ldots, s_r)\in \Omega_{\cQ}(\alpha_j,\beta_{j+1};M,N)$,
$$
\left|\hbox{Cum}_{\mu_\cY}^{(r)}(\psi^{(L)}_{s_1},\ldots,\psi^{(L)}_{s_r})\right|
\ll e^{-(\delta \beta_{j+1} - r\xi\alpha_j)} \,\left\|\hat f^{(L)}\right\|_{C^k}^{r},
$$
and
\begin{align*}
\sum_{(s_1,\ldots, s_r)\in \Omega_{\cQ}(\alpha_j,\beta_{j+1};M,N)} \left|\hbox{Cum}_{\mu_\cY}^{(r)}(\psi^{(L)}_{s_1},\ldots,\psi^{(L)}_{s_r})\right|
&\ll N^{r/2}e^{-(\delta \beta_{j+1} - r\xi\alpha_j)} \,\left\|\hat f^{(L)}\right\|_{C^k}^{r}\\
&\ll N^{r/2}e^{-(\delta \beta_{j+1} - r\xi\alpha_j)} L^r\,\|f\|_{C^k}^r,
\end{align*}
where we used Lemma \ref{l:siegel_trans}. \\

Finally, we combine the established bounds to estimate $\hbox{Cum}_{\mu_\cY}^{(r)}(F_N^{(L)})$.
We choose the parameters $\alpha_j$  and $\beta_j$ as in \eqref{eq:beta}.
Then $\beta_{r+1}\ll_r \gamma$.
In particular, we may choose 
\begin{equation}
\label{eq:cond4_0}
M\gg_r \gamma
\end{equation}
to guarantee that \eqref{eq:M0} is satisfied.
With these choices of $\alpha_j$ and $\beta_j$,
we obtain the estimate
\begin{align}
\label{eq:cumm_smooth}
\left|\hbox{Cum}_{\mu_\cY}^{(r)}(F^{(L)}_N)\right|
\ll_f N^{r/2}  e^{-\delta \gamma} L^{r} \|f\|_{C^{k}}^r
+N^{1-r/2}   \gamma^{r-1} L^{(r-(m+n-1))^+} \|f\|_{C^0}^r.
\end{align}
We observe that since $m\ge 2$, 
$$
\frac{(r-(m+n-1))^+}{m+n}<r/2-1
\quad\hbox{for all $r\ge 3$,}
$$
Hence, we can choose $q>1/(m+n)$ such that
$$
q(r-(m+n-1))^+<r/2-1
\quad\hbox{for all $r\ge 3$.}
$$
Then we select
$$
L=N^q,
$$
so that, in particular, the condition \eqref{eq:cond3} is satisfied.
Now \eqref{eq:cumm_smooth} can be rewritten as 
\begin{equation}
\label{eq:cumm_smooth_0}
\left|\hbox{Cum}_{\mu_\cY}^{(r)}(F^{(L)}_N)\right|
\ll_f N^{r/2+rq} e^{-\delta\gamma}\, \|f\|_{C^{k}}^r
+N^{q(r-(m+n-1))^+-(r/2-1)} \gamma^{r-1}\, \|f\|_{C^0}^r.
\end{equation}
Choosing $\gamma$ of the form
$$
\gamma=c_r(\log N)
$$
with sufficiently large $c_r>0$, 
we conclude that
$$
\hbox{Cum}_{\mu_\cY}^{(r)}(F^{(L)}_N)\to 0\quad\hbox{ as $N\to\infty$}
$$
for all $r\ge 3$.

\subsection{Estimating variances} 
\label{sec:var_smooth_siegel}

We now turn to the analysis of  the variances of the average $F_N^{(L)}$ which are given by
\begin{align*}
	\left\|F^{(L)}_N\right\|_{L^2(\cY)}^2 =&\frac{1}{N}\sum_{s_1=M}^{N-1}\sum_{s_2=M}^{N-1} \int_{\cY}
	\psi^{(L)}_{s_1}\psi^{(L)}_{s_2}\, d\mu_\cY.
\end{align*}
We proceed as in Section \ref{sec:var_compact} taking into account 
dependence on parameters $M$ and $L$.
We observe that this expression is symmetric with respect to $s_1$ and $s_2$,
writing $s_1=s+t$ and $s_2=t$ with $0\le s\le N-M-1$ and $M\le t \le N-s-1$,
we obtain that
\begin{align}\label{eq:FF_N_L}
\left\|F^{(L)}_N\right\|_{L^2(\cY)}^2 
=& \Theta^{(L)}_N(0)
+2\sum_{s=1}^{N-M-1}\Theta^{(L)}_N(s),
\end{align}
where
$$
\Theta^{(L)}_N(s):=\frac{1}{N}\sum_{t=M}^{N-1-s} \int_{\cY}\psi^{(L)}_{s+t}\psi^{(L)}_t\, d\mu_\cY.
$$
We have 
\begin{align*}
\int_{\cY}\psi^{(L)}_{s+t}\psi^{(L)}_t\, d\mu_\cY
=
\int_{\cY} (\hat f^{(L)}\circ a^{s+t})(\hat f^{(L)}\circ a^t)\, d\mu_\cY -
\mu_\cY(\hat f^{(L)}\circ a^{s+t})\mu_\cY(\hat f^{(L)}\circ a^t).
\end{align*}
To estimate $\Theta^{(L)}_N(s)$, we introduce an additional parameter $K=K(N)\to \infty$ such that $K\le M$ (to be specified later) and consider separately the cases when $s< K$ and when $s\ge  K$. \\

First, we consider the case when $s\ge K$. 
By Corollary \ref{cor:corr}, we have 
\begin{align}\label{eq:second0}
\int_{\cY} (\hat f^{(L)}\circ a^{s+t})(\hat f^{(L)}\circ a^t)\, d\mu_\cY
&=\mu_{\cX}(\hat f^{(L)})^2
 +O\left(e^{-\delta \min(s,t)} \left\|\hat f^{(L)}\right\|_{C^k}^2 \right).
\end{align}
Also, By Corollary \ref{cor:corr},
\begin{align}\label{eq:second}
	\int_{\cY}  (\hat f^{(L)}\circ a^t)\, d\mu_\cY
	&=\mu_{\cX} (\hat f^{(L)}) +O\left( e^{-\delta t} \left\|\hat f^{(L)}\right\|_{C^k} \right).
\end{align}
Hence, combining \eqref{eq:second0} and \eqref{eq:second}, we deduce that
\begin{align*}
	\int_{\cY}\psi^{(L)}_{s+t}\psi^{(L)}_t\, d\mu_\cY
	=O \left(e^{-\delta \min(s,t)} \left\|\hat f^{(L)}\right\|_{C^k}^2  \right).
\end{align*}
Since
\begin{align*}
\sum_{s=K}^{N-M-1}\left(\sum_{t=M}^{N-1-s}e^{-\delta \min(s,t)}\right)
&\le \sum_{s=K}^{N-1}\sum_{t=M}^{N-1} (e^{-\delta s}+e^{-\delta t})\ll N e^{-\delta K},
\end{align*}
we conclude that
\begin{equation}
\label{eq:Theta1}
\sum_{s=K}^{N-M-1}\Theta^{(L)}_N(s) \ll \, e^{-\delta K}\, \left\|\hat f^{(L)}\right\|_{C^k}^2
\ll_f \, e^{-\delta K} L^{2}\, \|f\|_{C^k}^2,
\end{equation}
where we used Lemma \ref{l:siegel_trans}.
The implied constants here and below in the proof depend only on $\supp(f)$. \\

Let us now consider the case $s< K$. We observe that Corollary \ref{cor:corr} (for $r = 1$)
applied to the function $\phi_s^{(L)}:= (\hat f^{(L)}\circ a^s)\hat f^{(L)}$ yields, 
\begin{align*}
\int_{\cY} (\hat f^{(L)}\circ a^{s+t})(\hat f^{(L)}\circ a^t)\, d\mu_\cY
&=\int_{\cY} (\phi_s^{(L)}\circ a^t) \, d\mu_\cY\\
&=\int_{\cX} \phi_s^{(L)}\, d\mu_\cX+O\left(e^{-\delta t}\, \left\|\phi_s^{(L)}\right\|_{C^k} \right).
\end{align*}
Furthermore, for some $\xi=\xi(m,n,k)>0$, we have
$$
\left\|\phi_s^{(L)}\right\|_{C^k}\ll \left\|\hat f^{(L)}\circ a^s\right\|_{C^k}\, \left\|\hat f^{(L)}\right\|_{C^k}\ll e^{\xi s}\,
\left\|\hat f^{(L)}\right\|_{C^k}^2. 
$$
Therefore, we deduce that
\begin{align*}
\int_{\cY} (\hat f^{(L)}\circ a^{s+t})(\hat f^{(L)}\circ a^t)\, d\mu_\cY
=\int_{\cX} (\hat f^{(L)}\circ a^s)\hat f^{(L)} \,d\mu_\cX +O \left(e^{-\delta t} e^{\xi s}\, \left\|\hat f^{(L)}\right\|_{C^k}^2  \right).
\end{align*}
Combining this estimate with \eqref{eq:second}, we conclude that
\begin{align*}
\int_{\cY}\psi^{(L)}_{s+t}\psi^{(L)}_t\, d\mu_\cY
=\int_{\cX} (\hat f^{(L)}\circ a^s)\hat f^{(L)}\, d\mu_\cX -\mu_\cX(\hat f^{(L)})^2
+ O \left(e^{-\delta t} e^{\xi s}\, \left\|\hat f^{(L)}\right\|_{C^k}^2  \right).
\end{align*}
Hence, setting 
$$
\Theta^{(L)}_\infty(s):=\int_{\cX} (\hat f^{(L)}\circ a^s)\hat f^{(L)}\, d\mu_\cX -\mu_\cX(\hat f^{(L)})^2,
$$
we obtain that
\begin{align*}
\Theta^{(L)}_N(s)=&\, \frac{N-M-s}{N} 
\Theta^{(L)}_\infty(s)+O \left(N^{-1} e^{-\delta M} e^{\xi s}\, \left\|\hat f^{(L)}\right\|_{C^k}^2  \right)\\
=& \, \Theta^{(L)}_\infty(s)
+O \left(N^{-1}(M+s)\left\|\hat f^{(L)}\right\|^2_{L^2(\cX)}+ N^{-1} e^{-\delta M} e^{\xi s}\, \left\|\hat f^{(L)}\right\|_{C^k}^2  \right).
\end{align*}
Therefore, using Lemma \ref{l:siegel_trans}, we deduce that
\begin{align}
\label{eq:Theta2}
\Theta^{(L)}_N(0)+2\sum_{s=1}^{K-1}\Theta^{(L)}_N(s) =& \,
\Theta^{(L)}_\infty(0)+2\sum_{s=1}^{K-1}\Theta^{(L)}_\infty (s) \\
&+O_f \left (N^{-1}(M+K)K\,\|f\|^2_{C^0}+ N^{-1} e^{-\delta M}  e^{\xi K} L^{2}\,\|f\|_{C^k}^2\right).\nonumber
\end{align}
Combining \eqref{eq:Theta1} and \eqref{eq:Theta2}, we conclude that 
\begin{align}\label{eq:starr}
&\Theta^{(L)}_N(0)+2\sum_{s=1}^{M-N-1}\Theta^{(L)}_N(s)\\
=& \,
\Theta^{(L)}_\infty(0)+2\sum_{s=1}^{K-1}\Theta^{(L)}_\infty (s) \nonumber\\
& + O_f \left (N^{-1}(M+K)K \, \|f\|^2_{C^0}+ (N^{-1} e^{-\delta M}  e^{\xi K}+e^{-\delta K}) L^{2}\,\|f\|_{C^k}^2 \right).\nonumber
\end{align}
We choose the parameters $K=K(N)$, $M=M(N)$, and $L=L(N)$ so that
\begin{align}
e^{-\delta K} L^{2} \to 0, \label{eq:cond5}\\
N^{-1} e^{-\delta M}  e^{\xi K} L^{2} \to 0, \label{eq:cond6}\\
N^{-1}(M+K)K \to 0 \label{eq:cond7}
\end{align}
as $N\to \infty$. Then
$$
\left\|F^{(L)}_N\right\|_{L^2(\cY)}^2 =
\Theta^{(L)}_\infty(0)+2\sum_{s=1}^{K-1}\Theta^{(L)}_\infty (s)+o(1).
$$

Next, we shall show that with a suitable choice of parameters,
\begin{equation}
\label{eq:fff}
\left\|F^{(L)}_N\right\|_{L^2(\cY)}^2 =
\Theta_\infty(0)+2\sum_{s=1}^{K-1}\Theta_\infty (s) +o(1),
\end{equation}
where
$$
\Theta_\infty(s):=\int_{\cX} (\hat f\circ a^s)\hat f\, d\mu_\cX -\mu_\cX(\hat f)^2.
$$
We recall that by Lemma \ref{l:siegel_trans}, for all $\tau<m+n-1$,
\begin{align}\label{eq:l2}
\left\|\hat f- \hat f^{(L)}\right\|_{L^1(\cX)}\ll_{f,\tau} L^{-\tau}\,  \|f\|_{C^0}\quad\hbox{and}\quad
\left\|\hat f- \hat f^{(L)}\right\|_{L^2(\cX)}\ll_{f,\tau} L^{-(\tau-1)/2}\, \|f\|_{C^0} ,
\end{align}
where the implied constant depends only on $\supp(f)$.
It follows from these estimates that
\begin{align*}
\mu_\cX(\hat f^{(L)})&=\mu_\cX(\hat f)+O_{f,\tau }(L^{-\tau}\,  \|f\|_{C^0}),\\
\int_{\cX} (\hat f^{(L)}\circ a^s)\hat f^{(L)}\, d\mu_\cX &=
\int_{\cX} (\hat f\circ a^s)\hat f\, d\mu_\cX+O_{f,\tau}(L^{-(\tau-1)/2}\,  \|f\|^2_{C^0}),
\end{align*}
so that
\begin{equation}
\label{eq:Theta3}
\Theta^{(L)}_\infty (s)=\Theta_\infty (s)+O_{f,\tau}\left(L^{-(\tau-1)/2} \,  \|f\|^2_{C^0}\right).
\end{equation}
We choose the parameters $K=K(N)\to\infty$ and $L=L(N)\to\infty$ so that 
\begin{equation}
\label{eq:cond8}
K L^{-(\tau-1)/2} \to 0 \quad\hbox{for some $\tau<m+n-1$}.
\end{equation}
Then \eqref{eq:fff} follows.
We conclude that 
\begin{equation}
\label{eq:ffff}
\left\|F^{(L)}_N\right\|_{L^2(\cY)}^2 \to
\Theta_\infty(0)+2\sum_{s=1}^{\infty}\Theta_\infty (s)
\end{equation}
as $N\to\infty$.

\vspace{0.2cm}

Finally, we compute $\Theta_\infty (s)$ using by the Rogers formula (Proposition \ref{p:rogers})
applied to the function 
$$
F_s(\overline{z}_1,\overline{z}_2):=\sum_{p,q\ge 1} f(p a^s\overline{z}_1)f(q \overline{z}_2),\quad (\overline{z}_1,\overline{z}_2)\in\bR^{m+n}\times \bR^{m+n}.
$$
Since 
$$
\int_{\cX} (\hat f\circ a^s)\hat f\, d\mu_\cX=
\int_{\cX} \left(\sum_{\overline{z}_1,\overline{z}_2\in (\mathbb{Z}^{m+n})^*} F_s(g\overline{z}_1,g\overline{z}_2)\right) d\mu_\cX(g\Gamma),
$$
we deduce that
\begin{align*}
\int_{\cX} (\hat f\circ a^s)\hat f\, d\mu_\cX
=&\left(\int_{\mathbb{R}^{m+n}} f(\overline{z})\, d\overline{z} \right)^2\\
&+
\zeta(m+n)^{-1}\sum_{p,q\ge 1} \left(\int_{\mathbb{R}^{m+n}} f(pa^s\overline{z})f(q\overline{z})\,d\overline{z}
+\int_{\mathbb{R}^{m+n}} f(pa^s\overline{z})f(-q\overline{z})\,d\overline{z}\right).
\end{align*}
Since by the Siegel Mean Value Theorem (Proposition \ref{p:siegel_mean}),
$$
\int_{\cX}\hat f \, d\mu_\cX= \int_{\mathbb{R}^{m+n}} f(\overline{z})\, d\overline{z},
$$
we conclude that
$$
\Theta_\infty(s)=\zeta(m+n)^{-1}\sum_{p,q\ge 1} \left(\int_{\mathbb{R}^{m+n}} f(pa^s\overline{z})f(q\overline{z})\,dz+\int_{\mathbb{R}^{m+n}} f(pa^s\overline{z})f(-q\overline{z})\,d\overline{z}\right).
$$

Finally, we show that the sum in \eqref{eq:ffff} is finite.
We represent points $\overline{z}\in \bR^{m+n}$ as $\overline{z}=(\overline{x},\overline{y})$ with $\overline{x}\in \bR^{m}$ and $\overline{y}\in \bR^{n}$.
Since $f$ is bounded, and its compact support is contained in $\{\overline{y}\ne 0\}$, we may assume without loss of generality that $f$
is the characteristic function of the set
$$
\{(\overline{x},\overline{y})\in\bR^{m+n}:\,\upsilon_1\le \|\overline{y}\|\le \upsilon_2,\quad |x_i|\le \vartheta\, \|\overline{y}\|^{-w_i},\; i=1,\ldots,m\}
$$
with $0<\upsilon_1<\upsilon_2$ and $\vartheta>0$.
Let 
$$
\Omega_s(p):=\left\{(\overline{x},\overline{y})\in\bR^{m+n}:\,\frac{\upsilon_1\, e^s}{p}\le \|\overline{y}\|\le \frac{\upsilon_2\, e^s}{p},\quad p^{1+w_i}|x_i|\|\overline{y}\|^{w_i}\le \vartheta,\; i=1,\ldots,m \right\}.
$$
Then 
$$
\int_{\mathbb{R}^{m+n}} f(pa^s\overline{z})f(\pm q\overline{z})\,d\overline{z}
=\hbox{vol}\left(\Omega_s(p)\cap \Omega_0(q)\right).
$$
Setting 
$$
I(u):=\left\{\overline{y}\in\bR^n:\, \upsilon_1\, u\le \|\overline{y}\|\le \upsilon_2 \,u\right\},
$$
we obtain that
\begin{align*}
\hbox{vol}\left(\Omega_s(p)\cap \Omega_0(q)\right)
&=\int_{I(e^sp^{-1})\cap I(q^{-1})} \left(\prod_{i=1}^m 2\vartheta \max(p,q)^{-1-w_i}\|\overline{y}\|^{-w_i}\right)\, d\overline{y}\\
&\ll \max(p,q)^{-m-n}\int_{I(e^sp^{-1})\cap I(q^{-1})} \|\overline{y}\|^{-n}\, d\overline{y}.
\end{align*}  
We note that $I(e^sp^{-1})\cap I(q^{-1})=\emptyset$ unless
$(\upsilon_1\upsilon_2^{-1})e^sq\le p\le (\upsilon_2\upsilon_1^{-1})e^sq$, and also that
$$
\int_{I(q^{-1})} \|\overline{y}\|^{-n}\, d\overline{y}\ll \int_{\upsilon_1/q}^{\upsilon_2/q} r^{-1}\, dr\ll 1.
$$
Hence, it follows
that 
\begin{align*}
\sum_{s=1}^\infty \Theta_\infty(s)\ll
\sum_{s,q=1}^\infty \sum_{(\upsilon_1\upsilon_2^{-1})e^sq\le p\le (\upsilon_2\upsilon_1^{-1})e^sq}
\max(p,q)^{-(m+n)}
\ll  \sum_{s,q=1}^\infty (e^sq)^{-(m+n-1)}<\infty,
\end{align*}
because $m+n\ge 3$.

\subsection{Proof of Theorem \ref{th:CLT_smooth_Siegel}}
As we already remarked above, it is sufficient to show that the sequence of averages
$F_N^{(L)}$ converges in distribution to the normal law.
To verify this, we use the Method of Cumulants (Proposition \ref{th:CLT}).
It is easy to see that 
$$
\int_{\cY} F_N^{(L)}\, d\mu_\cY=0.
$$
Moreover, with a suitable choice of parameters, we have shown in Section \ref{sec:cum_smooth_siegel} that for $r\ge 3$,
$$
\cum_{\mu_\cY}^{(r)}\left(F^{(L)}_N\right)\to 0\quad\hbox{as $N\to\infty$},
$$
and in Section \ref{sec:var_smooth_siegel} that 
$$
\left\|F^{(L)}_N\right\|_{L^2(\cY)}^2\to \sigma_f^2<\infty \quad\hbox{as $N\to\infty$}.
$$
Hence, Proposition \ref{th:CLT} applies, and it remains to verify
that we can choose our parameters that satisfy the stated assumptions.
We recall that 
$$
L=N^q\quad\hbox{ and }\quad \gamma=c_r(\log N).
$$
The parameters $M=M(N)\ge K=K(N)$ need to satisfy the seven conditions
\eqref{eq:cond1},
\eqref{eq:cond2},
\eqref{eq:cond4_0},
\eqref{eq:cond5},
\eqref{eq:cond6},
\eqref{eq:cond7},
\eqref{eq:cond8}.
We take
$$
K(N)=c_1(\log N)
$$
with sufficiently large $c_1>0$
so that \eqref{eq:cond5} is satisfied.
Then taking 
$$
M(N)=(\log N)(\log\log N),
$$ 
we arrange that \eqref{eq:cond2}, \eqref{eq:cond4_0}, and \eqref{eq:cond6} hold
for all $N\ge N_0(r)$. 
We note that the constant $c_r$ and the implicit constant in 
\eqref{eq:cond4_0} depends on $r$, and the $(\log\log N)$-factor here is added to guarantee that the parameter $M$ is independent of $r$.
Finally, the conditions \eqref{eq:cond1}, \eqref{eq:cond7}, \eqref{eq:cond8}
are immediate from our choices.

\section{CLT for counting functions and the proof of Theorem \ref{th:CLT_forms}}
\label{sec:CLT_counting}

We recall from Section \ref{sec:outline} that 
$$
\Delta_T(u)=|\Lambda_u\cap \Omega_T|+O(1),
$$
where $\Lambda_u$ is defined in \eqref{eq:lll} and the domains $\Omega_T$ are defined in \eqref{eq:omega_t}.
We shall decompose this domain into smaller pieces using the linear map $a=\hbox{diag}(e^{w_1},\ldots,e^{w_m},e^{-1},\ldots,e^{-1})$,
we note that for any integer $N \geq 1$, 
$$
\Omega_{e^N}=\bigsqcup_{s=0}^{N-1} a^s\Omega_e,
$$
and thus
$$
|\Lambda_u\cap \Omega_{e^N}|=\sum_{s=0}^{N-1} \hat\chi(a^s\Lambda_u),
$$
where $\chi$ denotes the characteristic function of the set $\Omega_e$. Hence the proof of Theorem \ref{th:CLT_forms} reduces 
to analyzing sums of the form $\sum_{s=0}^{N-1} \hat\chi(a^sy)$ with $y\in \cY$. For this purpose, we define 
\begin{equation}
\label{eq:F_N_0}
F_N:=\frac{1}{\sqrt{N}} \sum_{s=0}^{N-1} \left(\hat \chi\circ a^s-\mu_\cY(\hat \chi\circ a^s)\right).
\end{equation}
Our main result in this section now reads as follows.

\begin{theorem}\label{th:CLT_char_Siegel}
If $m\ge 2$,  then for every $\xi\in \mathbb{R}$, 
	$$
	\mu_\cY\left(\{y\in \cY:\, F_N(y)<\xi\}\right)\to
	\hbox{\rm Norm}_{\sigma}(\xi)
	$$	
	as $N\to \infty$, where
$$
\sigma^2:=
2^{m+1}\left(\prod_{i=1}^m \vartheta_i\right)\left(\int_{S^{n-1}} \|\overline{z}\|^{-n}\, d\overline{z}\right)\left(\frac{2\zeta(m+n-1)}{\zeta(m+n)}-1\right).
$$
\end{theorem}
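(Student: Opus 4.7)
The plan is to deduce Theorem \ref{th:CLT_char_Siegel} from the results of Section \ref{sec:CLT_siegel_smooth} by approximating $\chi$ with smooth compactly supported functions from above and below. Because the variance analysis requires pinning down an infinite series of correlations and the $C^k$-norms of the smooth approximations blow up as the approximation scale shrinks, the approximation parameter will have to depend on $N$, so Theorem \ref{th:CLT_smooth_Siegel} cannot be invoked as a black box; instead, I would revisit the proof of Section \ref{sec:CLT_siegel_smooth} while tracking the dependence on this parameter.

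\emph{Step 1: Smooth sandwich and $L^1$ reduction.} For each $\epsilon > 0$, construct non-negative $\chi_\epsilon^\pm \in C_c^\infty(\bR^{m+n})$ (by convolution with a bump of scale $\epsilon$) with supports in a fixed compact set avoiding $\{\bar y = 0\}$, satisfying $\chi_\epsilon^- \le \chi \le \chi_\epsilon^+$, $\|\chi_\epsilon^+ - \chi_\epsilon^-\|_{L^1(\bR^{m+n})} = O(\epsilon)$, $\|\chi_\epsilon^\pm\|_{C^0}\le 1$, and $\|\chi_\epsilon^\pm\|_{C^k} = O(\epsilon^{-k})$. The pointwise bound $\hat\chi_\epsilon^- \le \hat\chi \le \hat\chi_\epsilon^+$ implies, for the associated centred averages $F_N^{\pm,\epsilon}$,
\[
\|F_N - F_N^{\pm,\epsilon}\|_{L^1(\cY)} \le \frac{2}{\sqrt N}\sum_{s=0}^{N-1} \mu_\cY\!\big(|\widehat{\chi_\epsilon^\pm - \chi}|\circ a^s\big).
\]
Using the truncated-Siegel-transform machinery of Section \ref{sec:sieg_tran} together with Proposition \ref{p:div1} and the Siegel mean value theorem $\mu_\cX(\widehat{\chi_\epsilon^\pm-\chi})=O(\epsilon)$, the right-hand side is $O(\sqrt N \epsilon)$ uniformly. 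Choosing $\epsilon=\epsilon(N)=N^{-\alpha}$ with $\alpha > 1/2$ makes this tend to $0$, so it is enough to prove the CLT for $F_N^{+,\epsilon(N)}$ (and $F_N^{-,\epsilon(N)}$) with limit variance $\sigma^2$.

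\emph{Step 2: Quantitative CLT with $N$-dependent smooth data.} Since Theorem \ref{th:CLT_smooth_Siegel} is stated for a fixed $f$, it does not directly apply to $f=\chi_{\epsilon(N)}^\pm$. However, inspection of Section \ref{sec:CLT_siegel_smooth} shows that the cumulant estimate \eqref{eq:cumm_smooth_0} and the variance analysis of Section \ref{sec:var_smooth_siegel} are only polynomial in $\|f\|_{C^k}$, while the main cumulant decay comes from a factor $e^{-\delta \gamma}$ with $\gamma = c_r \log N$. With $\|\chi_{\epsilon(N)}^\pm\|_{C^k}\ll N^{k\alpha}$ growing only polynomially, enlarging $c_r$ absorbs the blow-up, and one verifies that the auxiliary parameters $K(N), M(N), L(N)$ can be chosen so that all compatibility conditions \eqref{eq:cond1}--\eqref{eq:cond8} remain satisfied. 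The cumulant method (Proposition \ref{th:CLT}) then yields convergence in distribution of $F_N^{\pm,\epsilon(N)}$ to the normal law with variance $\sigma_\pm^2 := \lim_{N\to\infty}\sigma_{\chi_{\epsilon(N)}^\pm}^2$.

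\emph{Step 3: Identification of $\sigma^2$.} By the Rogers-type formula provided by Theorem \ref{th:CLT_smooth_Siegel},
\[
\sigma_{\chi_\epsilon^\pm}^2 = \zeta(m+n)^{-1}\sum_{s\in\bZ}\sum_{p,q\ge 1}\Big(\int \chi_\epsilon^\pm(p a^s \bar z)\chi_\epsilon^\pm(q\bar z)\,d\bar z + \int \chi_\epsilon^\pm(p a^s \bar z)\chi_\epsilon^\pm(-q\bar z)\,d\bar z\Big),
\]
and the geometric decay estimates at the end of Section \ref{sec:var_smooth_siegel} (with $\chi_\epsilon^\pm$ in place of $f$) furnish a summable majorant. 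Dominated convergence therefore shows $\sigma_\pm^2$ equals the analogous series with $\chi$. Using the symmetry $\chi(-\bar x,\bar y)=\chi(\bar x,\bar y)$ of $\Omega_e$, the two families of integrals merge, and evaluating the $\bar y$-integral in polar coordinates produces the factor $\omega_n=\int_{S^{n-1}}\|\bar z\|^{-n}\,d\bar z$ together with a prefactor $2^m\prod_i \vartheta_i \cdot \max(p,q)^{-(m+n)}$. Collapsing the sum over $s$ (which identifies $p$ with $q e^s$ up to a bounded factor) reduces the series to $\sum_{k\ge 1}(2\tau(k)-1)k^{-(m+n)} = 2\zeta(m+n-1)\zeta(m+n)^{-1}-1$, yielding exactly the $\sigma^2$ in the statement.

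The main obstacle is Step 2: tracking the dependence on $\epsilon(N)$ through the chain of parameter choices $\gamma(N), K(N), M(N), L(N)$ in Section \ref{sec:CLT_siegel_smooth}, and ensuring that Proposition \ref{p:div1} and the $L^2$-bound of Proposition \ref{prop:sup2} are strong enough to give a uniform-in-$s$ bound on $\mu_\cY(|\widehat{\chi_\epsilon^\pm-\chi}|\circ a^s)$ in Step 1 despite $\chi_\epsilon^\pm - \chi$ being non-smooth. A secondary technical point is the combinatorial rearrangement of the Rogers double sum in Step 3 into the ratio $\zeta(m+n-1)/\zeta(m+n)$.
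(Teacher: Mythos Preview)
Your overall strategy coincides with the paper's: approximate $\chi$ by smooth $f_\eps$ with $\eps=\eps(N)$, rerun the argument of Section~\ref{sec:CLT_siegel_smooth} while tracking the polynomial dependence on $\|f_\eps\|_{C^k}\ll\eps^{-k}$ through the parameter choices, and identify the limiting variance via Rogers' formula. Steps~2 and the parameter bookkeeping are exactly how the paper proceeds.

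There is, however, a genuine gap in Step~1. Your proposed mechanism for the uniform bound $\mu_\cY\big(|\widehat{\chi_\eps^\pm-\chi}|\circ a^s\big)=O(\eps)$ --- ``truncated Siegel transform + Proposition~\ref{p:div1} + Siegel mean value'' --- does not work. The Siegel formula yields the $O(\eps)$ bound for the $\mu_\cX$-integral, but passing from $\mu_\cX$ to $\mu_\cY$ via equidistribution (Corollary~\ref{cor:corr}) requires the test function on $\cX$ to lie in $C_c^\infty(\cX)$. Since $\chi_\eps^\pm-\chi$ is not smooth (it contains the raw indicator $\chi$), neither is its truncated Siegel transform, and the quantitative mixing estimate is unavailable. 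You flag this as an obstacle, but the tools you list do not resolve it. The paper fills the gap with a dedicated direct computation (Proposition~\ref{p:esp_approx}): one unwinds $\int_\cY(\hat f_\eps-\hat\chi)\circ a^s\,d\mu_\cY$ as an explicit lattice-point sum over $\bar q$ with $\|\bar q\|\asymp e^s$ and bounds it by elementary volume estimates, obtaining $O(\eps+e^{-s})$ with no equidistribution needed. The extra $e^{-s}$ term then forces one to drop the first $M\gg\log N$ terms \emph{before} passing to the smooth approximation, a reordering your outline omits.

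A secondary point: the combinatorial identity in Step~3 is wrong as written. The series $\sum_{k\ge1}(2\tau(k)-1)k^{-(m+n)}$ equals $2\zeta(m+n)^2-\zeta(m+n)$, not the claimed expression. The paper's route is cleaner: sum over $s\in\bZ$ first, turning $\sum_s\chi\circ a^s$ into the indicator $\Xi$ of the full cone; then $\int_{\bR^{m+n}}\Xi(p\bar z)\chi(q\bar z)\,d\bar z$ evaluates to a constant times $\max(p,q)^{-(m+n)}$, and $\sum_{p,q\ge1}\max(p,q)^{-(m+n)}=\sum_{k\ge1}(2k-1)k^{-(m+n)}=2\zeta(m+n-1)-\zeta(m+n)$.
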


We approximate $\chi$ by a family of non-negative functions $f_\eps\in C^\infty_c(\bR^{m+n})$ whose supports are contained
in an $\eps$-neighbourhood of the set $\Omega_e$, and
$$
\chi \le f_\eps\le 1,\quad\|f_\eps-\chi\|_{L^1(\bR^{m+n})}\ll \eps,\quad \|f_\eps-\chi\|_{L^2(\bR^{m+n})}\ll \eps^{1/2},\quad
\|f_\eps\|_{C^k}\ll \eps^{-k}.
$$
This approximation allows us to  construct smooth approximations of the Siegel transform $\hat \chi$ in 
the following sense.

\begin{proposition}\label{p:esp_approx}
For every $s\ge 0$,
$$
\int_{\cY} \left|\hat f_\eps\circ a^s-\hat\chi\circ a^s\right|\, d\mu_\cY \ll \eps+e^{-s}.
$$
\end{proposition}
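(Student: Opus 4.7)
Since $f_\eps \geq \chi \geq 0$ pointwise on $\bR^{m+n}$, both Siegel transforms are non-negative and $\hat f_\eps \geq \hat \chi$, so the absolute value disappears and the task reduces to bounding $\int_\cY \widehat{g_\eps}\circ a^s\, d\mu_\cY$, where $g_\eps := f_\eps - \chi$. The function $g_\eps$ is non-negative, bounded by $1$, supported in the $\eps$-neighborhood of $\Omega_e$, and satisfies $\|g_\eps\|_{L^1(\bR^{m+n})} \ll \eps$; moreover, since $f_\eps \leq 1$ forces $f_\eps = 1$ wherever $\chi = 1$, the support of $g_\eps$ is in fact contained in the $\eps$-neighborhood of $\partial \Omega_e$.

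My first step would be to derive an explicit mean-value identity for $\int_\cY \widehat{g_\eps}\circ a^s\, d\mu_\cY$. Expanding the Siegel transform at $y = \Lambda_u$ gives $\widehat{g_\eps}(a^s\Lambda_u) = \sum_{(\overline{p},\overline{q})\neq 0} g_\eps(a^s(\overline{p}+u\overline{q}, \overline{q}))$, and the terms with $\overline{q}=0$ vanish because $g_\eps$ is supported away from $\{\overline{y}=0\}$. For each $\overline{q}\neq 0$, I would pick an index $j$ with $q_j\neq 0$ and, for each row $i$, combine the sum $\sum_{p_i\in\bZ}$ with the integral $\int_0^1 du_{ij}$ to unfold $v_i := p_i + (u\overline{q})_i$ over $\bR$ (the Jacobian $|q_j|^{-1}$ cancels the multiplicity $|q_j|$ with which shifted intervals of length $|q_j|$ cover $\bR$); the remaining integrations in $u_{ij'}$, $j'\neq j$, produce a factor of $1$. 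After substituting $\overline{w} = \mathrm{diag}(e^{w_1 s},\ldots,e^{w_m s})\overline{v}$ and using $\sum_i w_i = n$, this yields
\[
\int_\cY \widehat{g_\eps}\circ a^s\, d\mu_\cY \;=\; e^{-ns}\sum_{\overline{q}\in\bZ^n\setminus\{0\}} G(e^{-s}\overline{q}), \qquad G(\overline{y}) := \int_{\bR^m} g_\eps(\overline{w},\overline{y})\, d\overline{w}.
\]

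For the second step, I would exploit the fact that $G$ is $O(\eps)$ on the bulk of its support and $O(1)$ only on a thin shell. On the thick annulus $\{\|\overline{y}\|\in[1,e]\}$ at distance $\gtrsim \eps$ from the boundary radii, $g_\eps(\cdot,\overline{y})$ is supported in the $O(\eps)$-tubular neighborhood of the boundary of the box $\{|w_i|\leq\vartheta_i\|\overline{y}\|^{-w_i}\}$, giving $G(\overline{y})\ll\eps$. On the thin shells $\|\overline{y}\|\in[1-C\eps,1]\cup[e,e+C\eps]$, one has $\chi(\cdot,\overline{y}) = 0$ and hence $g_\eps(\cdot,\overline{y}) = f_\eps(\cdot,\overline{y})\leq 1$ on a box of volume $\ll 1$, so $G(\overline{y})\ll 1$. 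Standard lattice-point counts give $|\bZ^n\cap e^s\cdot\mathrm{thick}|\ll e^{ns}$ and $|\bZ^n\cap e^s\cdot\mathrm{thin}|\ll \eps\, e^{ns} + e^{(n-1)s}$, so summing the two contributions yields
\[
\int_\cY \widehat{g_\eps}\circ a^s\, d\mu_\cY \;\ll\; e^{-ns}\cdot e^{ns}\cdot\eps + e^{-ns}\bigl(\eps\, e^{ns} + e^{(n-1)s}\bigr) \;=\; O(\eps + e^{-s}).
\]

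The main obstacle will be the careful bookkeeping of the unfolding identity, particularly verifying that the exponent $\sum_i w_i = n$ coming from the weights combines correctly with the Jacobians at each stage. A naive comparison of the Riemann sum to its integral $\int G = \|g_\eps\|_{L^1}\ll\eps$ would require smoothness estimates on $G$ whose constants blow up with $\eps$; the crucial refinement is the two-region decomposition above, which exploits the fact that $g_\eps$ is not merely $L^1$-small but actually $\eps$-concentrated in the transverse $\overline{w}$-directions near the boundary of $\Omega_e$.
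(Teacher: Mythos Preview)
Your proof is correct and shares the same skeleton as the paper's: reduce to a weighted lattice sum over $\overline q\in\bZ^n$, split according to whether $e^{-s}\overline q$ lies near the radial boundary $\{\|\overline y\|\in\{1,e\}\}$ (thin shells) or in the bulk (thick annulus), and conclude by standard lattice-point counting. The packaging of the first step differs. The paper majorizes $g_\eps=f_\eps-\chi$ by a sum $\chi_{1,\eps}+\chi_{2,\eps}+\chi_{3,\eps}$ of \emph{product-form} indicator functions (two radial shells and one lateral-boundary piece), so that the explicit product formula from Proposition~\ref{prop:sup} applies to each piece, and then bounds the resulting $\overline u$-integrals individually. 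You instead establish the exact unfolding identity
\[
\int_\cY \widehat{g_\eps}\circ a^s\,d\mu_\cY \;=\; e^{-ns}\sum_{\overline q\ne 0} G(e^{-s}\overline q),\qquad G(\overline y)=\int_{\bR^m} g_\eps(\overline w,\overline y)\,d\overline w,
\]
which is precisely the identity the paper proves later as \eqref{eq:torus} in Lemma~\ref{l:sum_mu}, and then bound $G$ directly from the geometry of $\supp g_\eps\subset(\partial\Omega_e)_\eps$. Your route is a touch more streamlined and does not rely on $g_\eps$ being dominated by a product-type function; the paper's version has the minor advantage of never leaving the realm of explicit indicator functions, so the geometric claim that slicing the $\eps$-neighbourhood of $\partial\Omega_e$ at fixed $\overline y$ yields an $O(\eps)$-neighbourhood of the box boundary in $\bR^m$ is replaced by an explicit inequality. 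The two arguments are otherwise interchangeable.
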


\begin{proof}
We observe that there exists $\vartheta_i(\eps)>\vartheta_i$ such that $\vartheta_i(\eps)=\vartheta_i+O(\eps)$ and 
$f_\eps\le \chi_\eps$, where $\chi_\eps$ denotes 
the characteristic function of the set 
$$
\left\{(\overline{x},\overline{y})\in \bR^{m+n}:\, 1-\eps\le \|\overline{y}\|\le e+\eps,\; |x_i|<\vartheta_i(\eps)\,\|\overline{y}\|^{-w_i}\;\hbox{ for $i=1,\ldots,m$}\right\}.
$$
Then it follows that
$$
|\hat f_\eps(a^s\Lambda)-\hat\chi(a^s\Lambda)|=\sum_{v\in \Lambda\backslash \{0\}} \left(f_\eps(a^s v)-\chi(a^sv)\right)\le 
\sum_{v\in \Lambda\backslash \{0\}} \left(\chi_\eps(a^s v)-\chi(a^sv)\right).
$$
It is clear that $\chi_\eps -\chi$ is bounded by the sum $\chi_{1,\eps}+\chi_{2,\eps}+\chi_{3,\eps}$ of the characteristic functions of the sets
\begin{align*}
&\left\{(\overline{x},\overline{y})\in \bR^{m+n}:\, 1-\eps\le \|\overline{y}\|\le 1,\; |x_i|<\vartheta_i(\eps)\,\|\overline{y}\|^{-w_i}\;\hbox{ for $i=1,\ldots,m$}\right\},\\
&\left\{(\overline{x},\overline{y})\in \bR^{m+n}:\, e\le \|\overline{y}\|\le e+\eps,\; |x_i|<\vartheta_i(\eps)\,\|\overline{y}\|^{-w_i}\;\hbox{ for $i=1,\ldots,m$}\right\},\\
&\left\{(\overline{x},\overline{y})\in \bR^{m+n}:\, 1\le \|\overline{y}\|\le e,\; |x_i|<\vartheta_i(\eps)\,\|\overline{y}\|^{-w_i}\;\hbox{ for all $i$},\; |x_j|\ge c\, \|\overline{y}\|^{-w_j}\hbox{ for some $j$} \right\}
\end{align*}
respectively. In particular, we obtain that  
$$
\hat f_\eps(a^s\Lambda)-\hat\chi(a^s\Lambda)\le \hat\chi_{1,\eps}(a^s\Lambda)+\hat\chi_{2,\eps}(a^s\Lambda)+\hat\chi_{3,\eps}(a^s\Lambda).
$$
Hence, it remains to show that for $j=1,2,3$,
$$
\int_{\cY} (\hat \chi_{j,\eps}\circ a^s)\, d\mu_\cY\ll \eps+e^{-s}
$$
As in \eqref{eq:formula}, we compute that
\begin{align}\label{eq:integral}
\int_{\cY} (\hat \chi_{1,\eps}\circ a^s)\, d\mu_\cY=\sum_{(1-\eps)e^s\le \|\overline{q}\|\le e^s} \prod_{i=1}^m \left( \sum_{p_i\in \mathbb{Z}} \int_{[0,1]^n} \chi_{\vartheta_i(\eps)\|\overline{q}\|^{-w_i}}\left(p_i+\left<\overline{u}_i,\overline{q}\right>\right) d\overline{u}_i\right),
\end{align}
where $\chi_{\theta}$ denotes the characteristic function of the interval
$\left[-\theta, \theta\right]$.
We observe that
$$
\int_{[0,1]^n} \chi_{\vartheta_i(\eps)\|\overline{q}\|^{-w_i}}(p_i+\left<\overline{u}_i,\overline{q}\right>) d\overline{u}_i \ll ({\max}_k |q_k|)^{-1} \|\overline{q}\|^{-w_i}\ll \|\overline{q}\|^{-1-w_i},
$$
and moreover this integral is non-zero only when $|p_i|=O(\|\overline{q}\|)$.
Hence,
$$
\sum_{p_i\in \mathbb{Z}} \int_{[0,1]^n} \chi_{\vartheta_i(\eps)\|\overline{q}\|^{-w_i} }\left(p_i+\left<\overline{u}_i,\overline{q}\right>\right) d\overline{u}_i
\ll \|\overline{q}\|^{-w_i},
$$
and 
\begin{align*}
\int_{\cY} (\hat \chi_{1,\eps}\circ a^s)\, d\mu_\cY
&\ll\sum_{(1-\eps)e^s\le \|\overline{q}\|\le e^s}\prod_{i=1}^m \|\overline{q}\|^{-w_i}\\
&\ll e^{-ns}\, \left|\left\{\overline{q}\in\bZ^n:\, (1-\eps)e^s\le \|\overline{q}\|\le e^s\right\}\right|.
\end{align*}
The number of integral points in the region $\{(1-\eps)e^s\le \|\overline{y}\|\le e^s\}$
can be estimated in terms of its volume. Namely,
there exist $r>0$ (depending only on the norm) such that 
$$
\left|\left\{\overline{q}\in\bZ^n:\, (1-\eps)e^s\le \|\overline{q}\|\le e^s\right\}\right|
\ll \left|\left\{\overline{y}\in\bR^n:\, (1-\eps)e^s-r\le \|\overline{y}\|\le e^s+r\right\}\right|.
$$
Hence,
\begin{align*}
\int_{\cY} (\hat \chi_{1,\eps}\circ a^s)\, d\mu_\cY
&\ll e^{-ns}\left( (e^s+r)^n-((1-\eps)e^s-r)^n\right)\ll 
(1+re^{-s})^n-(1-\eps-re^{-s})^n\\
&\ll \eps+e^{-s}.
\end{align*}
The integral for $\hat \chi_{2,\eps}\circ a^s$ can be estimated similarly. \\

The integral over $\hat \chi_{3,\eps}\circ a^s$ as in \eqref{eq:integral}
 can be written as a sum of the products of the integral 
\begin{align*}
&\int_{[0,1]^n}\left( \chi_{\vartheta_j(\eps)\|\overline{q}\|^{-w_j}}(p_j+\left<\overline{u}_j,\overline{q}\right>)- \chi_{\vartheta_j\|\overline{q}\|^{-w_j}}(p_j+\left<\overline{u}_j,\overline{q}\right>)\right)\,d\overline{u}_j \\
\le&\, 2({\max}_k |q_k|)^{-1} (\vartheta_j(\eps)-\vartheta_j) \|\overline{q}\|^{-w_j}
\ll \eps \|\overline{q}\|^{-1-w_j},
\end{align*}
and the integrals 
\begin{align*}
\int_{[0,1]^n} \chi_{\vartheta_i(\eps)\|\overline{q}\|^{-w_i}}(p_i+\left<\overline{u}_i,\overline{q}\right>)\,d\overline{u}_i &\le 2\vartheta_i(\eps) ({\max}_k |q_k|)^{-1} \|\overline{q}\|^{-w_i}
\ll \|\overline{q}\|^{-1-w_i}
\end{align*}
with $i\ne j$.
We observe that these integrals are non-zero only when $|p_j|=O(\|\overline{q}\|)$ and $|p_i|=O(\|\overline{q}\|)$.
Hence, we conclude that
\begin{align*}
\int_{\cY} (\hat \chi_{3,\eps}\circ a^s)\, d\mu_\cY
&\ll \sum_{e^s\le \|\overline{q}\|\le e^{s+1}} \eps\prod_{i=1}^m \|\overline{q}\|^{-w_i}
= \eps \sum_{e^s\le \|\overline{q}\|\le e^{s+1}} \|\overline{q}\|^{-n}\ll \eps,
\end{align*}
which completes the proof of the proposition.
\end{proof}

Now we start with the proof of Theorem \ref{th:CLT_char_Siegel}.
As in Section \ref{sec:CLT_siegel_smooth},
we  modify $F_N$ and consider instead 
\begin{equation}
\label{eq:F_N_t_2}
\tilde F_N:=\frac{1}{\sqrt{N}} \sum_{s=M}^{N-1} \left(\hat \chi\circ a^s-\mu_\cY(\hat \chi\circ a^s)\right)
\end{equation}
for a parameter $M=M(N)\to \infty$ that will be chosen later.
As in \eqref{eq:tilde} we obtain that
$$
\|F_N-\tilde{F}_N\|_{L^1(\cY)}\to 0\quad \hbox{as $N\to\infty$}
$$
provided that 
\begin{equation}
\label{eq:cc1}
M=o(N^{1/2}).
\end{equation}
Hence, if we can prove the CLT for $(\tilde{F}_N)$, then the CLT for $(F_N)$ would follow. 
From now on, to simplify notations, we assume that $F_N$ is given by \eqref{eq:F_N_t_2}. \\

Our next step is to exploit the approximation $\chi\approx f_\eps$,
 so we introduce 
$$
F^{(\eps)}_N:=\frac{1}{\sqrt{N}} \sum_{s=M}^{N-1} \left(\hat f_\eps\circ a^s-\mu_\cY(\hat f_\eps\circ a^s)\right),
$$
where the parameter $\eps=\eps(N)\to 0$ will be specified later.
We observe that it follows from  Proposition \ref{p:esp_approx} that
$$
\left\|F^{(\eps)}_N-F_N\right\|_{L^1(\cY)}\le 
\frac{2}{\sqrt{N}} \sum_{s=M}^{N-1} \left\|\hat f_\eps\circ a^s-\hat \chi\circ a^s\right\|_{L^1(\cY)}\ll N^{1/2}(\eps+e^{-M}).
$$
We choose $\eps=\eps(N)$ and $M=M(N)$ so that
\begin{equation}
\label{eq:cc2}
N^{1/2}\eps\to 0\quad\hbox{and}\quad  N^{1/2} e^{-M}\to 0.
\end{equation}
Then
$$
\left\|F^{(\eps)}_N-F_N\right\|_{L^1(\cY)}\to 0\quad \hbox{as $N\to\infty$}.
$$
Hence, it remains to prove convergence in distribution for the sequence $F^{(\eps)}_N$. \\

We observe that the sequence $F_N^{(\eps)}$ fits into the framework of Section  \ref{sec:CLT_siegel_smooth}. However, we need to take into account the dependence on the new parameter $\eps$ and refine the previous estimates. 
It will be important for our argument that the supports of the functions $f_\eps$ are uniformly bounded, $\|f_\eps\|_{C^0}\ll 1$,
and $\|f_\eps\|_{C^k}\ll \eps^{-k}$. \\

As in Section  \ref{sec:CLT_siegel_smooth},
we consider the truncation 
$$
F^{(\eps,L)}_N:=\frac{1}{\sqrt{N}} \sum_{s=M}^{N-1} \left(\hat f^{(L)}_\eps\circ a^s-\mu_\cY(\hat f^{(L)}_\eps\circ a^s)\right).
$$
defined for a parameter $L=L(N)\to\infty$. We assume that
\begin{equation}
\label{eq:cc3}
M\gg \log L,
\end{equation}
so that Proposition \ref{prop:sup2} applies when $s\ge M$.
Since the family of functions $f_\eps$ is majorized by a fixed bounded function
with compact support, Proposition \ref{prop:sup2} implies that when $m\ge 2$ ,
$$
\left\|\hat f_\eps\circ a^s\right\|_{L^2(\cY)}\ll 1\quad \hbox{ for all $s\ge 0$,}
$$
 uniformly on $\eps$. Hence, the bound \eqref{eq:FNNN} can be proved exactly as before,
and we obtain 
$$
\left\| F_N^{(\eps)} - F_N^{(\eps, L)}\right\|_{L^1(\cY)}\ll_p N^{1/2}L^{-p/2}\quad\hbox{for all $p<m+n$}.
$$
We choose the parameter $L$ as before so that
\begin{equation}
\label{eq:cc4}
N=o\left( L^p\right)\quad \hbox{for some $p<m+n$}
\end{equation}
to guarantee that 
$$
\left\| F_N^{(\eps)} - F_N^{(\eps, L)}\right\|_{L^1(\cY)}\to 0\quad\hbox{as $N\to\infty$}.
$$
Now it remains to show that the family $F_N^{(\eps, L)}$ satisfies the CLT
with a suitable choice of parameters $M,L,\eps$.
As in Section \ref{sec:CLT_siegel_smooth} we will show that for $r\ge 3$,
\begin{align}\label{eq:cccc}
\cum_{\mu_\cY}^{(r)}\left(F^{(\eps,L)}_N\right)&\to 0\quad \hbox{as $N\to\infty$},
\end{align}
and 
\begin{align}\label{eq:vvv}
\left\|F^{(\eps,L)}_N\right\|_{L^2(\cY)}^2\to \sigma\quad \hbox{as $N\to\infty$}
\end{align}
with an explicit $\sigma\in (0,\infty)$. \\

Under the condition
\begin{equation}
\label{eq:cc5}
M\gg \gamma,
\end{equation}
the estimate \eqref{eq:cumm_smooth} gives the bound
\begin{align*}
\left|\hbox{Cum}_{\mu_\cY}^{(r)}(F^{(\eps,L)}_N)\right| &\ll 
N^{r/2} e^{-\delta\gamma} L^{r}\|f_\eps\|_{C^k}^r+ N^{1-r/2}\gamma^{r-1} L^{(r-(m+n-1))^+} \|f_\eps\|_{C^0}^r\\
&\ll 
N^{r/2} e^{-\delta\gamma} L^{r}\eps^{-rk}+ N^{1-r/2}\gamma^{r-1} L^{(r-(m+n-1))^+}.
\end{align*}
We note that the implicit constant in \eqref{eq:cumm_smooth} depends only
on  $\supp(f_\eps)$ so that it is uniform on $\eps$.
We choose $L=N^q$ as in Section \ref{sec:CLT_siegel_smooth} and 
$\gamma=c_r(\log N)$, where $c_r>0$ will be specified later.
In particular, then
$N^{1-r/2}\gamma^{r-1} L^{(r-(m+n-1))^+}\to 0$, and 
assuming that 
\begin{equation}
\label{eq:cc6}
N^{r/2} L^{r}\eps^{-rk}=o(e^{\delta\gamma}),
\end{equation}
it follows that \eqref{eq:cccc} holds.

\vspace{0.2cm}

To prove \eqref{eq:vvv}, we have to estimate 
\begin{align*}
\left\|F^{(\eps,L)}_N\right\|_{L^2(\cY)}^2 =&\frac{1}{N}\sum_{s_1=M}^{N-1}\sum_{s_2=M}^{N-1} \int_{\cY}
\psi^{(\eps,L)}_{s_1}\psi^{(\eps,L)}_{s_2}\, d\mu_\cY,
\end{align*}
where
$$
\psi^{(\eps,L)}_s(y):=\hat f_\eps^{(L)}(a^sy)-\mu_\cY(\hat f_\eps^{(L)} \circ a^s).
$$
As in \eqref{eq:FF_N_L}, we obtain that
\begin{align*}
\left\|F^{(\eps,L)}_N\right\|_{L^2(\cY)}^2 
=&\, \Theta^{(\eps,L)}_N(0)
+2\sum_{s=1}^{N-M-1}\Theta^{(\eps,L)}_N(s),
\end{align*}
where
$$
\Theta^{(\eps,L)}_N(s):=\frac{1}{N}\sum_{t=M}^{N-1-s} \int_{\cY}\psi^{(\eps,L)}_{s+t}\psi^{(\eps,L)}_t\, d\mu_\cY.
$$
Our estimate proceeds as in Section \ref{sec:CLT_siegel_smooth},
and we shall show that with a suitable choice of parameters,
\begin{align}\label{eq:small}
\left\|F^{(\eps,L)}_N\right\|_{L^2(\cY)}^2 
=& \Theta^{(\eps,L)}_\infty(0)
+2\sum_{s=1}^{K-1}\Theta^{(\eps,L)}_\infty(s)+o(1),
\end{align}
where 
$$
\Theta^{(\eps,L)}_\infty(s):=\int_{\cX} (\hat f_\eps^{(L)}\circ a^s)\hat f_\eps^{(L)}\, d\mu_\cX -\mu_\cX(\hat f_\eps^{(L)})^2.
$$
Indeed, arguing as in \eqref{eq:starr}, we deduce that
\begin{align*}
&\Theta^{(\eps,L)}_N(0)+2\sum_{s=1}^{M-N-1}\Theta^{(\eps,L)}_N(s)\\
=&\,
\Theta^{(\eps,L)}_\infty(0)+2\sum_{s=1}^{K-1}\Theta^{(\eps,L)}_\infty (s)\\
& + O \left (N^{-1}(M+K)K+ N^{-1} e^{-\delta M}  e^{\xi K} L^{2}\eps^{-2k} + e^{-\delta K} L^{2}\eps^{-2k} \right).
\end{align*}
Hence, \eqref{eq:small} holds provided that
\begin{align}
e^{-\delta K} L^{2}\eps^{-2k} \to 0, \label{eq:cc7}\\
N^{-1} e^{-\delta M}  e^{\xi K} L^{2}\eps^{-2k} \to 0, \label{eq:cc8}\\
N^{-1}(M+K)K \to 0. \label{eq:cc9}
\end{align}

Next, we set
$$
\Theta^{(\eps)}_\infty(s):=\int_{\cX} (\hat f_\eps\circ a^s)\hat f_\eps\, d\mu_\cX -\mu_\cX(\hat f_\eps)^2
$$
and observe that as in \eqref{eq:Theta3},
$$
\Theta^{(\eps,L)}_\infty (s)=\Theta^{(\eps)}_\infty (s)+O_\tau\left(L^{-(\tau-1)/2}\right)\quad\hbox{ for all $\tau<m+n-1$,}
$$
uniformly on $\eps$. Hence,
choosing $K$ so that 
\begin{equation}
\label{eq:cc10}
K L^{-(\tau-1)/2} \to 0 \quad \hbox{for some $\tau<m+n-1$},
\end{equation}
we conclude that
$$
\left\|F^{(\eps,L)}_N\right\|_{L^2(\cY)}^2 =
\Theta^{(\eps)}_\infty(0)+2\sum_{s=1}^{K-1}\Theta^{(\eps)}_\infty (s)+o(1).
$$
The terms $\Theta^{(\eps)}(s)$ can be computed using Propositions \ref{p:siegel_mean} and \ref{p:rogers}, and we obtain that
$$
\Theta^{(\eps)}_\infty(s)=\zeta(m+n)^{-1}\sum_{p,q\ge 1} \left(\int_{\mathbb{R}^{m+n}} f_\eps(pa^s\overline{z})f_\eps(q\overline{z})\,d\overline{z}+
\int_{\mathbb{R}^{m+n}} f_\eps(pa^s\overline{z})f_\eps(-q\overline{z})\,d\overline{z}\right).
$$
We also set
\begin{align*}
\Theta_\infty(s):=&\zeta(m+n)^{-1}\sum_{p,q\ge 1} \left(\int_{\mathbb{R}^{m+n}} \chi(pa^s\overline{z})\chi(q\overline{z})\,d\overline{z}+\int_{\mathbb{R}^{m+n}} \chi(pa^s\overline{z})\chi(-q\overline{z})\,d\overline{z}\right)\\
=& 2\zeta(m+n)^{-1}\sum_{p,q\ge 1} \int_{\mathbb{R}^{m+n}} \chi(pa^s\overline{z})\chi(q\overline{z})\,d\overline{z}.
\end{align*}
We claim that 
\begin{equation}
\label{eq:Theta}
|\Theta^{(\eps)}_\infty(s)-\Theta_\infty(s)|\ll \eps^{1/2}e^{-(m+n-2)s/2}.
\end{equation}
This reduces to the estimation of 
\begin{align}
&\left|\int_{\mathbb{R}^{m+n}} f_\eps(pa^s\overline{z})f_\eps(q\overline{z})\,d\overline{z}-
\int_{\mathbb{R}^{m+n}} \chi(pa^s\overline{z})\chi(q\overline{z})\,d\overline{z}\right| \label{eq:sum_int}\\
\le&
\left|\int_{\mathbb{R}^{m+n}} (f_\eps-\chi)(pa^s\overline{z})f_\eps(q\overline{z})\,d\overline{z}\right|+
\left|\int_{\mathbb{R}^{m+n}} \chi(pa^s\overline{z})(\chi-f_\eps)(q\overline{z})\,d\overline{z} \right|.\nonumber
\end{align}
We observe that there exists $0<\upsilon_1<\upsilon_2$ such that 
$$
\int_{\mathbb{R}^{m+n}} (f_\eps-\chi)(pa^s\overline{z})f_\eps(q\overline{z})\,d\overline{z}=0
$$
unless $\upsilon_1\,e^sq\le p\le \upsilon_2\, e^sq$, and by the Cauchy--Schwarz inequality
under these restrictions,
$$
\left|\int_{\mathbb{R}^{m+n}} (f_\eps-\chi)(pa^s\overline{z})f_\eps(q\overline{z})\,d\overline{z}\right|
\le \frac{\|f_\eps-\chi\|_{L^2}}{p^{(m+n)/2}}\frac{\|f_\eps\|_{L^2}}{q^{(m+n)/2}}
\ll \frac{\eps^{1/2}}{q^{m+n}e^{(m+n)s/2}}. 
$$
Since $m+n\ge 3$,
\begin{align*}
\sum_{p,q\ge 1}\left|\int_{\mathbb{R}^{m+n}} (f_\eps-\chi)(pa^s\overline{z})f_\eps(q\overline{z})\,d\overline{z}\right|
&= \sum_{q\ge 1}\sum_{\upsilon_1\,e^sq\le p\le \upsilon_2\, e^sq}
\left|\int_{\mathbb{R}^{m+n}} (f_\eps-\chi)(pa^s\overline{z})f_\eps(q\overline{z})\,d\overline{z}\right|\\
&\ll \sum_{q\ge 1}\frac{\eps^{1/2}e^sq }{q^{m+n}e^{(m+n)s/2}}\ll \eps^{1/2}e^{-(m+n-2)s/2}.
\end{align*}
The sum of the other integral appearing in \eqref{eq:sum_int} is estimated similarly. This proves \eqref{eq:Theta}.

\vspace{0.2cm}

Provided that 
$\eps=\eps(N)\to 0,$
the estimate \eqref{eq:Theta} implies that
$$
\left\|F^{(\eps,L)}_N\right\|_{L^2(\cY)}^2 =
\Theta_\infty(0)+2\sum_{s=1}^{K-1}\Theta_\infty (s)+o(1).
$$
Hence, 
$$
\left\|F^{(\eps,L)}_N\right\|_{L^2(\cY)}^2 \to \sigma^2:=\Theta_\infty(0)+2\sum_{s=1}^{\infty}\Theta_\infty (s)
$$
as $N\to\infty$.

\vspace{0.2cm}

Finally, we compute the limit 
$$
\sigma^2=\sum_{s=-\infty}^{\infty}\Theta_\infty (s)=
2\zeta(m+n)^{-1}\sum_{s=-\infty}^{\infty} \sum_{p,q\ge 1}\int_{\bR^{m+n}} \chi(pa^s\overline{z})\chi(q\overline{z})\, d\overline{z}.
$$
We note that the sum
$$
\Xi:=\sum_{s=-\infty}^{\infty} \chi\circ a^s
$$
is equal to the characteristic function of the set
$$
\left\{(\overline{x},\overline{y})\in \bR^{m+n}:\, \|\overline{y}\|>0, \;\; |x_i|<\vartheta_i\, \|\overline{y}\|^{-w_i},\; i=1,\ldots,m\right\},
$$
and 
\begin{align*}
\int_{\bR^{m+n}} \Xi(p\overline{z})\chi(q\overline{z})\, d\overline{z}=&\int_{1/q\le \|y\|<e/q} \left(\prod_{i=1}^m 2\vartheta_i\max(p,q)^{-1-w_i}\|\overline{y}\|^{-w_i}\right)\,d\overline{y}\\
=&\,2^m\left(\prod_{i=1}^m \vartheta_i\right)\max(p,q)^{-m-n} \int_{1/q\le \|\overline{y}\|<e/q} \|\overline{y}\|^{-n}\,d\overline{y}\\
=&\,2^m\left(\prod_{i=1}^m \vartheta_i\right)\max(p,q)^{-m-n} \int_{S^{n-1}}\|\overline{z}\|^{-n} \left(\int_{q^{-1}\|\overline{z}\|^{-1}}^{eq^{-1}\|\overline{z}\|^{-1}} r^{-1}\, dr\right) \,d\overline{z}\\
=&\,2^m\left(\prod_{i=1}^m \vartheta_i\right)\omega_n\,\max(p,q)^{-m-n},
\end{align*}
where $\omega_n:=\int_{S^{n-1}}\|\overline{z}\|^{-n}\, d\overline{z}$.
We also see that
\begin{align*}
\sum_{p,q\ge 1} \max(p,q)^{-m-n} &=\sum_{p=1}^\infty p^{-m-n}+2\sum_{1\le p<q} q^{-m-n}\\
&=\zeta(m+n)+2\sum_{q\ge 1} \frac{q-1}{q^{m+n}}
=2\zeta(m+n-1)-\zeta(m+n),
\end{align*}
and thus
$$
\sigma^2=\sum_{s=-\infty}^{\infty}\Theta_\infty (s)=
2^{m+1}\left(\prod_{i=1}^m \vartheta_i\right)\omega_n\left(\frac{2\zeta(m+n-1)}{\zeta(m+n)}-1\right).
$$

\subsection{Proof of Theorem \ref{th:CLT_char_Siegel}}

As we already remarked above, it is sufficient to show that the average $F_N^{(\eps,L)}$
converge in distribution to the normal law. According to Proposition \ref{th:CLT},
it is sufficient to check that
$$
\cum^{(r)}_{\mu_\cY}\left(F_N^{(\eps,L)}\right)\to 0\quad\hbox{as $N\to\infty$}
$$
when $r\ge 3$, and 
$$
\left\|F_N^{(\eps,L)}\right\|_{L^2(\cY)}\to\sigma^2\quad\hbox{as $N\to\infty$}.
$$
These properties have been established above provided that the parameters
$$
M=M(N),\;\; \eps=\eps(N),\;\; L=N^q,\;\; \gamma=c_r(\log N),\;\; K=K(N)\le M(N)
$$
satisfy the ten conditions \eqref{eq:cc1}, \eqref{eq:cc2}, \eqref{eq:cc3}, \eqref{eq:cc4}, \eqref{eq:cc5}, \eqref{eq:cc6}, \eqref{eq:cc7},
\eqref{eq:cc8}, \eqref{eq:cc9}, \eqref{eq:cc10}.
It remains to show that such choice of parameters is possible.
The condition \eqref{eq:cc4} is guaranteed by the choice of $L$.
First, we take 
$$
\eps(N)=1/N.
$$
Then the first part of \eqref{eq:cc2} holds. Then we select 
sufficiently large $c_r$ in $\gamma=c_r(\log N)$
so that \eqref{eq:cc6} holds.
After that we choose 
$$
K(N)=c_1(\log N)
$$
with sufficiently large $c_1>0$
so that \eqref{eq:cc7} holds. Then it is clear that \eqref{eq:cc10} also holds.
Given these $\eps$, $L$, $\gamma$, and $K$, we choose
$$
M(N)=(\log N)(\log\log N)
$$
so that the second part of \eqref{eq:cc2}, \eqref{eq:cc3}, \eqref{eq:cc5},
and \eqref{eq:cc8} hold for all $N\ge N_0(r)$. With these choices, it is clear that \eqref{eq:cc1}
and \eqref{eq:cc9} also hold.
Hence, Theorem \ref{th:CLT_char_Siegel} follows from Proposition \ref{th:CLT}.

\subsection{Proof of Theorem \ref{th:CLT_forms}}

For $u\in \hbox{M}_{m,n}([0,1])$, we set
$$
D_T({u}):=\frac{\Delta_T({u})-C_{m,n}\,\log T}{(\log T)^{1/2}},
$$
where $C_{m,n}=2^m\vartheta_1\cdots \vartheta_m\omega_n$ with $\omega_n:=\int_{S^{n-1}} \|\overline{z}\|^{-n}\,d\overline{z}$.
We shall show that $D_T({u})$ can be approximated by the averages $F_N$ defined in \eqref{eq:F_N_0}. This will allow us deduce convergence in distribution 
for $D_T$. We observe that:

\begin{lemma}\label{l:sum_mu}
$$
\sum_{s=0}^{N-1}\int_{\cY} \hat \chi(a^s y)\,d\mu_\cY(y)
= C_{m,n} N+O(1),
$$
where $C_{m,n}$ is defined above.
\end{lemma}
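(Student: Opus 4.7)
I would prove this by linearity of the Siegel transform followed by direct unfolding, avoiding any appeal to Corollary~\ref{cor:corr} (which doesn't apply to the unbounded, non-smooth $\hat\chi$). The plan is to identify the sum on the left-hand side as the integral of a single Siegel transform, and then compute that integral exactly.

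First, the pieces $a^{-s}\Omega_e$, $s=0,\ldots,N-1$, tile $\Omega_{e^N}$ disjointly, so $\sum_{s=0}^{N-1}\chi\circ a^s=\Xi_N$, where $\Xi_N$ is the indicator function of $\Omega_{e^N}$. Since the Siegel transform is linear, this gives
\[
\sum_{s=0}^{N-1}\int_{\cY}\hat\chi(a^sy)\,d\mu_\cY(y)=\int_{\cY}\widehat{\Xi_N}\,d\mu_\cY.
\]
Next, I unfold the right-hand side. From the definition of $\Lambda_u$,
$\widehat{\Xi_N}(\Lambda_u)$ counts pairs $(\overline p,\overline q)\in\bZ^m\times\bZ^n$ with $1\le\|\overline q\|<e^N$ and $|p_i+\langle\overline u_i,\overline q\rangle|<\vartheta_i\|\overline q\|^{-w_i}$ for each $i$. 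Since all terms are non-negative, Fubini-Tonelli yields
\[
\int_{\cY}\widehat{\Xi_N}\,d\mu_\cY=\sum_{\overline q\in\bZ^n,\,1\le\|\overline q\|<e^N}\prod_{i=1}^m I_i(\overline q),\quad I_i(\overline q):=\sum_{p\in\bZ}\int_{[0,1]^n}\chi^{(i)}_{\overline q}(p+\langle\overline u,\overline q\rangle)\,d\overline u,
\]
exactly as in the computation at the start of the proof of Proposition~\ref{prop:sup}.

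The key step is to evaluate $I_i(\overline q)$ exactly, not merely bound it as was done in Proposition~\ref{prop:sup}. The periodization $g(t):=\sum_{p\in\bZ}\chi^{(i)}_{\overline q}(p+t)$ is $1$-periodic, and unfolding gives $\int_0^1 g=\int_\bR\chi^{(i)}_{\overline q}=2\vartheta_i\|\overline q\|^{-w_i}$. For any $\overline q\in\bZ^n\setminus\{0\}$, the map $\overline u\mapsto\langle\overline u,\overline q\rangle\bmod 1$ is a continuous surjective homomorphism $\bT^n\to\bT$, and hence pushes Haar measure forward to Haar measure on $\bT$. Therefore
\[
I_i(\overline q)=\int_{\bT^n}g(\langle\overline u,\overline q\rangle)\,d\overline u=\int_0^1 g(t)\,dt=2\vartheta_i\|\overline q\|^{-w_i}
\]
for every non-zero $\overline q\in\bZ^n$, with no smallness hypothesis. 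Using $w_1+\cdots+w_m=n$, the product factorizes as $\prod_i I_i(\overline q)=2^m(\vartheta_1\cdots\vartheta_m)\|\overline q\|^{-n}$.

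It remains to estimate $S(T):=\sum_{\overline q\in\bZ^n,\,1\le\|\overline q\|<T}\|\overline q\|^{-n}$ with $T=e^N$. The classical lattice-point count $A(T):=|\{\overline q\in\bZ^n\setminus\{0\}:\|\overline q\|\le T\}|=c_nT^n+O(T^{n-1})$, where $c_n=\vol\{\|\cdot\|\le 1\}$, combined with Abel summation, yields
\[
S(T)=nc_n\log T+O(1).
\]
Polar coordinates give $c_n=\tfrac{1}{n}\int_{S^{n-1}}\|\overline z\|^{-n}d\overline z=\tfrac{1}{n}\omega_n$, so $nc_n=\omega_n$, and hence $S(e^N)=\omega_n N+O(1)$. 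Multiplying by $2^m\vartheta_1\cdots\vartheta_m$ gives $\int_\cY\widehat{\Xi_N}\,d\mu_\cY=C_{m,n}N+O(1)$, as required. The only genuinely non-routine step is the exact evaluation of $I_i(\overline q)$ for arbitrary (possibly non-primitive, possibly short) $\overline q$, which succeeds because the map $\overline u\mapsto\langle\overline u,\overline q\rangle\bmod1$ is a quotient homomorphism of compact tori; the rest is standard lattice-point asymptotics.
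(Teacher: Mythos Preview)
Your proof is correct and follows essentially the same route as the paper: both combine the sum into the single Siegel transform $\widehat{\Xi_N}$, unfold over $\cY$, evaluate the inner sums $I_i(\overline q)=2\vartheta_i\|\overline q\|^{-w_i}$ exactly via a toral-homomorphism argument, and then reduce to the lattice sum $\sum_{1\le\|\overline q\|<e^N}\|\overline q\|^{-n}=\omega_n N+O(1)$. The only cosmetic differences are that the paper phrases the torus step using an explicit epimorphism $S$ of $\bR^n/\bZ^n$ (the same content as your pushforward-of-Haar argument) and handles the final lattice sum by direct comparison with the integral rather than Abel summation.
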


\begin{proof}
We observe that
$$
\sum_{s=0}^{N-1}\int_{\cY} \hat \chi(a^s y)\,d\mu_\cY(y)
=\int_{\cY} \hat \Xi_N( y)\,d\mu_\cY(y),
$$
where $\Xi_N$ denotes the charateristic function of the set
$$
\left\{(\overline{x},\overline{y})\in \bR^{m+n}:\, 1\le \|\overline{y}\|< e^N, \;\; |x_i|<\vartheta_i\, \|\overline{y}\|^{-w_i},\; i=1,\ldots,m\right\}.
$$
Using notation as in the proof of Proposition \ref{prop:sup}, we obtain 
\begin{align*}
\int_{\cY} \hat \Xi_N( y)\,d\mu_\cY(y)&=\sum_{1\le \|\overline{q}\|< e^N} \sum_{\overline{p}\in \bZ^m} \prod_{i=1}^m \int_{[0,1]^m} \chi^{(i)}_{\overline{q}}\left(p_i+\left<\overline{u}_i,\overline{q}\right>\right) d\overline{u}_i\\
&=\sum_{1\le \|\overline{q}\|< e^N}   \prod_{i=1}^m \left(\sum_{p_i\in \mathbb{Z}}\int_{[0,1]^m} \chi^{(i)}_{\overline{q}}\left(p_i+\left<\overline{u}_i,\overline{q}\right>\right)\, d\overline{u}_i\right).
\end{align*}
We claim that
\begin{align}\label{eq:torus}
\sum_{p\in \mathbb{Z}}\int_{[0,1]^m} \chi^{(i)}_{\overline{q}}\left(p+\left<\overline{u},\overline{q}\right>\right)\, d\overline{u}
=2\vartheta_i \|\overline{q}\|^{-w_i}.
\end{align}
To prove this, let us consider more generally a bounded measurable functions $\chi$ on $\bR$ with compact support, the function $\psi(x)=\chi(x_1)$ on $\bR^m$,
and the function $\tilde\psi(x)=\sum_{p\in\bZ} \chi(p+x_1)$ on the torus $\bR^m/\bZ^m$. We suppose without loss of generality that $q_1\ne 0$ and consider 
a non-degenerate linear map 
$$
S:\bR^m\to \bR^m: \overline{u}\mapsto \left(\left<\overline{u},\overline{q}\right>,u_2,\ldots,u_m\right)
$$
which induced a linear epimorphism of the torus $\bR^m/\bZ^m$.
Using that $S$ preserves the the  Lebesgue probability measure $\mu$ on 
$\bR^m/\bZ^m$, we deduce that
\begin{align*}
\sum_{p\in \mathbb{Z}}\int_{[0,1]^m} \chi\left(p+\left<\overline{u},\overline{q}\right>\right)\, d\overline{u}
=\int_{\bR^m/\bZ^m} \tilde\psi(Sx)\, d\mu(x)
=\int_{\bR^m/\bZ^m} \tilde\psi(x)\, d\mu(x)=\int_{\bR}\chi(x_1)\,dx_1,
\end{align*}
which yields \eqref{eq:torus}. \\

In turn, \eqref{eq:torus} implies that
\begin{align*}
\int_{\cY} \hat \Xi_N\,d\mu_\cY=
2^m\left(\prod_{i=1}^m \vartheta_i\right) \sum_{1\le \|\overline{q}\|< e^N} \|\overline{q}\|^{-n}.
\end{align*}
Using that $\|\overline{y}_1\|^{-n}=\|\overline{y}_1\|^{-n}+O\left(\|\overline{y}_1\|^{-n-1}\right)$
when $\|\overline{y}_1-\overline{y}_2\|\ll 1$, we deduce that
\begin{align*}
\sum_{1\le \|\overline{q}\|< e^N} \|\overline{q}\|^{-n}
&=\int_{1\le \|\overline{y}\|< e^N} \|\overline{y}\|^{-n}\, d\overline{y}+O(1),
\end{align*} 
and expressing the integral in polar coordinates, we obtain
\begin{align*}
\int_{1\le \|\overline{y}\|< e^N} \|\overline{y}\|^{-n}\, d\overline{y}
&=\int_{S^{n-1}}\int_{\|\overline{z}\|^{-1}}^{\|\overline{z}\|^{-1} e^N} \|r\overline{z}\|^{-n}\, r^{n-1}drd\overline{z}
=\omega_n N+O(1).
\end{align*} 
This implies the lemma.
\end{proof}

Now we return to the proof of Theorem \ref{th:CLT_forms}.
Since 
$$
\Delta_{e^N}(u)=\sum_{s=0}^{N-1}\hat \chi(a^s\Lambda_u)+O(1),
$$
Lemma \ref{l:sum_mu} implies that
$$
\|D_{e^N}-F_N\|_{C^0}\to 0\quad \hbox{as $N\to\infty$,}
$$
where $(F_N)$ is defined as in \eqref{eq:F_N_0}. Therefore, it follows from Theorem 
\ref{th:CLT_char_Siegel} that for every $\xi\in \bR$,
$$
|\{\xi\in \hbox{M}_{m,n}([0,1]):\, D_{e^N}({u})<\xi \})|\to \hbox{Norm}_\sigma(\xi)
\quad \hbox{ as $N\to\infty$.}
$$
Let us take $N_T=\lfloor \log T\rfloor$. Then 
$$
e^{N_T}\le T<e^{N_T+1}\quad\hbox{and}\quad N_T\le \log T<N_T+1,
$$
so that
$$
D_T\le \frac{\Delta_{e^{N_T+1}}-C_{m,n}\,N_T}{(\log T)^{1/2}}=a_T\, D_{e^{N_T+1}}+ b_T
$$
with $a_T\to 1$ and $b_T\to 0$ as $T\to\infty$.
Hence, we deduce that
$$
|\{{u}\in \hbox{M}_{m,n}([0,1]):\, D_{T}({u})<\xi \}|\ge 
|\{{u}\in \hbox{M}_{m,n}([0,1]):\, D_{e^{N_T+1}}({u})<(\xi-b_T)/a_T \}|.
$$
It follows that for any $\eps>0$ and sufficiently large $T$,
$$
|\{{u}\in \hbox{M}_{m,n}([0,1]):\, D_{T}({u})<\xi \}|\ge
|\{{u}\in \hbox{M}_{m,n}([0,1]):\, D_{e^{N_T+1}}({u})<\xi-\eps \}|.
$$
Therefore,
$$
\liminf_{T\to\infty}|\{{u}\in \hbox{M}_{m,n}([0,1]):\, D_{T}({u})<\xi \}|\ge \hbox{Norm}_\sigma(\xi-\eps)
$$
for all $\eps>0$. This implies that
$$
\liminf_{T\to\infty}|\{{u}\in \hbox{M}([0,1]):\, D_{T}({u})<\xi \}|\ge \hbox{Norm}_\sigma(\xi).
$$
A similar argument also implies the upper bound
$$
\limsup_{T\to\infty}|\{{u}\in \hbox{M}_{m,n}([0,1]):\, D_{T}({u})<\xi \}|\le \hbox{Norm}_\sigma(\xi).
$$
This completes the proof of Theorem \ref{th:CLT_forms}.

\end{document}